\documentclass[hidelinks,onefignum,onetabnum]{siamart220329}

%
%
%
%


\usepackage{lipsum}
\usepackage{amsfonts,amssymb,amsmath}
\usepackage{mathtools}
\usepackage{nicefrac}
\usepackage{enumitem}
\usepackage{graphicx}
\usepackage{epstopdf}
\usepackage[algo2e]{algorithm2e}
\usepackage{tikz}
\usepackage{pgfplots}
\pgfplotsset{compat=1.18} 
\usetikzlibrary{calc,plotmarks}
\usepackage[outputdir=./,frozencache]{minted}
\usepackage{cprotect}
\definecolor{bg}{rgb}{0.95,0.95,0.95}
\ifpdf
  \DeclareGraphicsExtensions{.eps,.pdf,.png,.jpg}
\else
  \DeclareGraphicsExtensions{.eps}
\fi


\newcommand\myR{\mathbb{R}_{>}}

\newsiamremark{example}{Example}
\newsiamremark{remark}{Remark}
\newsiamremark{hypothesis}{Hypothesis}
\crefname{hypothesis}{Hypothesis}{Hypotheses}
\newsiamthm{claim}{Claim}

\headers{Stochastic $p$th root approximation of a stochastic matrix}{F. Durastante, B. Meini}

\title{Stochastic $p$th root approximation of a stochastic matrix: A Riemannian optimization approach\thanks{Submitted to the editors \today.
\funding{This work has been partially supported by:  Spoke 1 ``FutureHPC \& BigData''  of the%
Italian Research Center on High-Performance Computing, Big Data and Quantum%
Computing (ICSC)  funded by MUR Missione 4 Componente 2 Investimento 1.4:%
Potenziamento strutture di ricerca e creazione di ``campioni nazionali di R\&S%
(M4C2-19 )'' - Next Generation EU (NGEU); by the 
``INdAM – GNCS Project: Metodi basati su matrici e tensori strutturati per problemi di algebra lineare di grandi dimensioni'' code CUP\_E53C22001930001; and by the PRIN project ``Low-rank Structures and Numerical Methods in Matrix and Tensor Computations and their Application'' code 20227PCCKZ. The authors are member of the INdAM GNCS group.
}}}

\author{Fabio Durastante\thanks{Dipartimento di Matematica, Universit\`{a} di Pisa, Pisa (PI), Italy
  (\email{fabio.durastante@unipi.it}, \url{https://fdurastante.github.io/}).}
\and Beatrice Meini\thanks{Dipartimento di Matematica, Universit\`{a} di Pisa, Pisa (PI), Italy
  (\email{beatrice.meini@unipi.it}).}
}

\usepackage{amsopn}

\usepackage{booktabs}
\makeatletter
\newenvironment{tabminted}{%
	\let\FV@ListVSpace\relax  
	\minted
}{%
	\endminted
	\unskip   
	\aftergroup\@tabmintedend
}
\newcommand*{\tabminted@finalstrut}[1]{%
	\ifdim\prevdepth>0pt
	\ifdim\dp#1>\prevdepth
	\vskip\dimexpr(\dp#1)-\prevdepth\relax
	\fi
	\else
	\vskip\dimexpr(\dp#1)\relax
	\fi
}
\newcommand*{\@tabmintedend}{%
	\let\@finalstrut\tabminted@finalstrut
}
\makeatother
\usepackage[caption=false]{subfig}
\usepackage{microtype}

\ifpdf
\hypersetup{
  pdftitle={Stochastic $p$th root approximation of a stochastic matrix},
  pdfauthor={F. Durastante, B. Meini}
}
\fi

\begin{document}

\maketitle

\begin{abstract}
We propose two approaches, based on Riemannian optimization, for computing
a stochastic approximation of the $p$th root of a stochastic matrix $A$. In the first approach, the approximation is found in the Riemannian manifold of
positive stochastic matrices.
In the second approach, we introduce the Riemannian manifold of
positive stochastic matrices sharing with $A$ the Perron eigenvector and we compute the approximation of the $p$th root of $A$ in such a manifold.
This way, differently from the available methods based on constrained optimization, $A$ and its $p$th root approximation share the Perron eigenvector.
Such a property is relevant, from a modeling point of view,  in the embedding problem for Markov chains.
The extended numerical experimentation shows that, in the first approach,  the Riemannian optimization methods are generally faster and more accurate than the available methods based on constrained optimization.
In the second approach, even though the stochastic approximation of the $p$th root is found in a smaller set, the approximation is generally more accurate than the one obtained by standard constrained optimization.
\end{abstract}

\begin{keywords}
Stochastic matrix, Matrix $p$th root, Riemannian optimization, Markov chains, Embedding problem
\end{keywords}

\begin{MSCcodes}
65C40, 65K05, 53B21, 65F60
\end{MSCcodes}

\section{Introduction}

Discrete and continuous-time Markov chains are used to model a range of different time-evolving phenomena such as queueing models \cite{blm:book}, physician's estimate of prognosis under alternative treatment plans~\cite{application1}, synthetic DNA~\cite{Ardiyansyah2021}, rating agencies predicting the evolution of a firm's rating in a given time interval~\cite{application2,Hughes2016}, studying diffusion and consensus on directed graphs~\cite{Veerman2019} or the analysis of daily rainfall occurrence~\cite{application3}. 
The evolution of a discrete-time Markov chain, with  a finite number $n$ of states, is described in terms of an $n\times n$ matrix $A$, called \emph{transition matrix}, whose $(i,j)$-th entry represents the probability to go from state $i$ to state $j$ in one unit of time. The matrix $A$ is \emph{stochastic}, i.e., belongs to the set
\[
\mathbb{S}_n^0 = \{ S \in \mathbb{R}^{n \times n} \,:\; S \mathbf{1} = \mathbf{1}, \; S \geq 0 \},
\]
where $\mathbf{1} = (1,1,\ldots,1)^T \in \mathbb{R}^n$, and the symbol ``$\geq$'' represents the element-wise ordering{; see~\cite[Chapter~8]{bp:book} for an introduction to finite Markov chains}. 
For the Perron-Frobenius theorem, any stochastic matrix $A$ has a nonnegative Perron eigenvector, i.e., a nonnegative vector $\boldsymbol{\pi}\ne 0$ such that $\boldsymbol{\pi}^T A=\boldsymbol{\pi}^T$; when $\boldsymbol{\pi}$ is normalized so that $\boldsymbol{\pi}^T\mathbf{1}=1$, then $\boldsymbol{\pi}$ is called steady state vector, or stationary distribution, for the matrix $A$. If $A$ is irreducible, then the steady state vector  has positive entries and is unique, moreover $\lim_{k\to\infty}A^k=\mathbf{1} \boldsymbol{\pi}^T$.

In many applications, the entries of the matrix $A$ are estimated through the analysis of historical series over long time intervals. 
Therefore, the unit time at which transitions occur is generally larger, compared with the characteristic time of the phenomenon to be analyzed. To know the transition probabilities in the typical time step of the phenomenon, it would therefore be necessary to investigate what happens in a fraction of a unit of time: for instance, which are the transition probabilities in a half-time unit?
An attempt might be computing a matrix $X$ such that
$A  = X^2$,
or, in other terms, a \emph{square root} of the transition matrix $A$. More generally, we can inquire about any number of intermediate steps $p$ thus looking for a $p$th root $X$ of $A$,
$
A = X^p$, $p \in \mathbb{N}$.
However, for the matrix $X$ to be descriptive of a Markov process, we need it to be itself a transition matrix, that is, $X$ should satisfy 
\[
X^p = A, \; X \in\mathbb{S}_n^0.
\]
Unfortunately, such $X$ does not exist in general~\cite{HighamLijing2010}, and several pathological cases can be readily produced, e.g., the $p$th root may exist or not, it may exist and not be stochastic, and there can even be more than one stochastic $p$th root. In some cases, we can exploit the fact that $f(z)= z^{\nicefrac{1}{p}}$ has more than one branch in the complex plane to define \emph{non-primary} matrix functions $A^{\nicefrac{1}{p}}$ by selecting different determinations of the function on repeated eigenvalues --- see~\cite[Section~1.4]{HighamBook} --- and look for a non-primary stochastic matrix $p$th root. Even with this added degree of freedom, stochastic $p$th roots of a stochastic matrix might not exist. 

The problem of the existence of a stochastic $p$th root of a stochastic matrix is also strictly related to the so called embedding problem for Markov chains (see \cite{vanbrunt}). Indeed, a Markov chain
with transition matrix $A$ is \emph{embeddable} if and only if there exists a rate matrix $Q$ such
that $A=\exp(Q)$. We recall that a rate matrix is a matrix with nonnegative off-diagonal entries, such that $Q\mathbf{1}=\mathbf{0}$. It is immediate to verify that, if a Markov chain is embeddable, then $X=\exp(Q/p)$ is a stochastic $p$th root of $A$, for any $p$. More precisely, in \cite{Kingman} it is shown that a Markov chain is embeddable if and only if the transition matrix $A$ is nonsingular and has stochastic $p$th roots of any order~$p$. To this regard, in \cite{vanbrunt} a characterization of embeddable Markov chains is given in terms of infinite divisibility properties of nonnegative matrices. In practice, these conditions are difficult to {verify} and sufficient conditions for embeddability have been introduced for specific cases, as  equal-input, circulant, symmetric or doubly stochastic matrices~\cite{Baake2020}, small size matrices~\cite{Casanellas2020}, or in other frameworks \cite{Davies2010,Ekhosuehi2023,HighamLijing2010,Bhat2020}.

In the case where a stochastic $p$th root does not exist,  an alternative approach consists {of} finding an approximation which is a stochastic matrix. 
To this end, there are some methods available in the literature relying on optimization strategies, see, e.g., the code package in~\cite{Pfeuffer2017}. 
Given $A\in\mathbb{S}_n^0$, the main approach consists in the computation of the solution $X$  of the following constrained optimization problem, where $\|\cdot\|_F$ is the Frobenius norm (see \cite{HighamLijing2010,https://doi.org/10.1002/sim.2970}):
\begin{enumerate}[label=(\emph{\alph*}),ref=(\emph{\alph*})]
	\item\label{alg:a2} find
	\[
	X = \arg\min_{ X  \in \mathbb{S}_n^0 } \frac{1}{2}\| X^p - A\|_F^2.
	\]
\end{enumerate}
Other, less used, strategies consist in
\begin{enumerate}[label=(\emph{\alph*}),ref=(\emph{\alph*})]
	\setcounter{enumi}{1}
	\item\label{alg:a1} find
	\[
	X = \arg\min_{ X  \in \mathbb{S}_n^0 } \frac{1}{2}\| X - A^{\nicefrac{1}{p}} \|_F^2,
	\]
	where $A^{\nicefrac{1}{p}}$ is the principal $p$th root of $A$;
	\item\label{alg:a3} find 
	\[
	\mathbf{h} = \arg\min_{ \mathbf{h} \in \Omega  } \left\| \left( \sum_{i=0}^{n-1} h_i A^i \right)^p - A \right\|_F^2,
	\]
	for
	\[
	\Omega = \{ \mathbf{h} \in \mathbb{R}^n \,:\,\mathbf{1}^T \mathbf{h} = 1,\; B \mathbf{h} \geq 0,  \; B = [\operatorname{vec}(I)|\operatorname{vec}(A)|\cdots|\operatorname{vec}(A^{n-1})] \},
	\]
	where $\operatorname{vec}(F)$ is the vector obtained by stacking the columns of the matrix $F$,
	and set $X = X(\mathbf{h}) = \sum_{i=0}^{n-1} h_i A^i$.
\end{enumerate}

Formulations \ref{alg:a2} and \ref{alg:a1} deliver an approximation $X$ that is not, in general, a matrix-function of $A$, while in \ref{alg:a3} the approximation is a primary matrix function by construction. 
However, as pointed out in \cite{HighamLijing2010}, there are situations where the stochastic $p$th root exists but it is not a matrix-function of $A$, therefore \ref{alg:a3} does not compute such a stochastic $p$th root. On the other hand,  since $X$ is a function of $A$, a nice feature of formulation \ref{alg:a3} is that the output matrix $X$ shares with $A$ the steady state vector.
If both $A$ and $X$ are irreducible, this implies that $\lim_{k\to\infty}X^k=\lim_{k\to\infty}A^k=\mathbf{1}\boldsymbol{\pi}^T$, i.e., the asymptotic behavior of the Markov chains with transition matrices $X$ and $A$, respectively, is the same. From the modeling point of view, this is a desirable property, since we expect that doing time steps of different ``lengths'' should always bring us to the same limit.

In this paper, we propose to compute a stochastic approximation $X$ to a $p$th root of the stochastic matrix $A$ by relying on a Riemannian optimization approach. Indeed, it is well known that the set of positive stochastic matrices is a Riemannian manifold, called multinomial manifold \cite{Douik8861409}. Therefore, the first approach that we propose is to solve problem \ref{alg:a2} in this Riemannian optimization setting. 
However, as in standard constrained optimization, the computed matrix $X$ is a stochastic matrix that generally does not share with $A$ the steady state vector. 
To overcome this drawback, given a positive vector  $\boldsymbol{\pi} \in \mathbb{R}^{n}$ such that $\boldsymbol{\pi}^T \mathbf{1} = 1$, 
we introduce the Riemannian manifold $\mathbb{S}_n^{\boldsymbol{\pi}}$ of positive stochastic matrices, having $\boldsymbol{\pi}$ as steady state vector. Such Riemannian manifold can be seen as the generalization of the Riemannian manifold of doubly stochastic matrices,
which corresponds to the special case where $\boldsymbol{\pi}=\frac1n\mathbf{1}$. In order to apply the Riemannian optimization algorithms, we give an expression to the tangent space, to the orthogonal complement and orthogonal projection, and to the Riemannian gradient and Hessian, by extending the analog properties valid for doubly stochastic matrices. To define {a} retraction from the tangent bundle to the manifold, we use a generalization of the Sinkhorn-Knopp algorithm. Hence, given an irreducible stochastic matrix $A$ with stationary distribution $\boldsymbol{\pi}$, we approximate its stochastic $p$th root by solving~\ref{alg:a2} in the manifold $\mathbb{S}_n^{\boldsymbol{\pi}}$. In implementing the optimization algorithms we need to solve several singular symmetric linear systems; in this regard, we provide a lower and an upper bound to the nonzero eigenvalues of the matrix, which give information on the convergence of Conjugate Gradient-like methods. Moreover, we propose some {preconditioners} to improve the convergence of iterative methods for the solution of such linear systems. 

The new Riemannian manifold $\mathbb{S}_n^{\boldsymbol{\pi}}$ has been implemented in Matlab, in a format compatible with the Manopt library \cite{manopt}. The code is available in the GIT repository \href{https://github.com/Cirdans-Home/pth-root-stochastic}{github.com/Cirdans-Home/pth-root-stochastic}. 

The proposed methods have been tested on a variety of stochastic matrices $A$, with different properties, in terms of size and embeddability. An application to finance, where $A$ represents the transitions in the credit classes \cite{application2,Hughes2016}, has been treated in detail.

In the cases where we are not interested in preserving the steady state vector, the comparisons between constrained optimization algorithms,  integrating together trust region and interior point techniques~\cite{interiorpoint}, and the Riemannian-based optimizers for formulation~\ref{alg:a2} on the multinomial manifold, show that the latter achieve smaller or equal residuals, and they are generally faster. 
When we are interested in preserving the steady state vector, the numerical experiments  on the Riemannian manifold $\mathbb{S}_n^{\boldsymbol{\pi}}$ show that, in some cases, the approximation of the $p$th root of $A$ has a higher residual with respect to the approximation in the manifold $\mathbb{S}_n$ of positive stochastic matrices. This is expected, since the set $\mathbb{S}_n^{\boldsymbol{\pi}}$ is smaller than the set $\mathbb{S}_n$. However, in general, optimization methods in set $\mathbb{S}_n$ provide an approximation having a stationary distribution far from the stationary distribution $\boldsymbol{\pi}$ of $A$.
In the application to credit ranking in finance, the matrix $A$ is reducible, therefore we apply our method to the irreducible stochastic matrix $\widetilde A=\gamma A+(1-\gamma)\frac1n \boldsymbol{1}\boldsymbol{1}^T$, $0<\gamma<1$, which resembles the Page Rank matrix. From the numerical experiments, the approximation of the $p$th root has a structure close to the reducible structure of $A$, and the numerical values of its entries are very realistic, from a modeling point of view.

The paper is organized as follows. In Section~\ref{sec:riem_opt_general} we recall the main definitions concerning Riemannian manifolds and Riemannian optimization. 
In Section~\ref{sec:stochastic-old-methods} we recall the properties of the multinomial manifold of positive stochastic matrices and solve problem~\ref{alg:a2} in the framework of Riemannian optimization in such a manifold. In Section~\ref{sec:stochastic-new-methods} we introduce the Riemannian manifold $\mathbb{S}_n^{\boldsymbol{\pi}}$ of stochastic matrices having a common steady state vector $\boldsymbol{\pi}$, and solve problem~\ref{alg:a2} in this manifold. We present the numerical experiments in Section~\ref{sec:numer_ex} and draw conclusions in Section~\ref{sec:conclusions}.

\subsection{Notation}

In the following, the symbols ``$\oslash$'' and ``$\odot$'' represent the Hadamard (entry-wise) matrix division and multiplication, respectively. Given a vector $\mathbf{x}$ -- always denoted in bold face -- the symbol ``$\operatorname{diag}(\mathbf{x})$'' denotes the diagonal matrix having the entries of $\mathbf{x}$ on the main diagonal; for notational simplicity, we will also denote $D_{\mathbf{x}} = \operatorname{diag}(\mathbf{x})$. If $A$ is a square matrix, then ``$\operatorname{diag}(A)$'' denotes the vector formed by the diagonal entries of~$A$, and $\lambda(A)$ its spectrum. The notation concerning Riemannian geometry will be introduced at the time of their~use. If $A\in\mathbb{R}^{m\times n}$, $A$ is said to be nonnegative (positive), and we write $A\ge 0$ ($A>0$), if all its entries are nonnegative (positive).

\section{Preliminaries on Riemannian optimization}\label{sec:riem_opt_general}
We start by recalling some definitions concerning Riemannian manifolds and Riemannian optimization. The interested reader may find more details on this subject in~\cite{Douik8861409,AbsilBook,MR4533407}.

{\begin{definition}[Embedded Manifold]\label{ref:manifold-definition}
Let $\mathcal{E}$ be a linear space of dimension $d$. A non-empty subset $\mathcal{M}$ of $\mathcal{E}$ is a smooth embedded submanifold of $\mathcal{E}$ of dimension $n$ if either
\begin{itemize}
    \item $n = d$ and $\mathcal{M}$ is open in $\mathcal{E}$;
    \item $n = d-k$ for some $k \geq 1$ and, for each $x \in \mathcal{M}$, there exists a neighborhood $U$ of $x$ in $\mathcal{E}$ and a smooth function $h : U \rightarrow \mathbb{R}^k$ such that
    \begin{itemize}
        \item If $y \in U$, then $h(y)= 0$ if and only if $y \in \mathcal{M}$; and
        \item $\operatorname{rank} \mathrm{D}h(x) = k$, for $\mathrm{D}h(x)$ the differential of $h$ at $x$;
    \end{itemize}
    such function $h$ is called a \emph{local defining function} for $\mathcal{M}$ at $x$.
\end{itemize}
\end{definition}}

A tangent vector to $\mathcal{M}$ at a point $x$ is defined as follows:

\begin{definition}[Tangent vector, tangent bundle]\label{def:tangent_vector}
	A tangent vector $\xi_x$ to a manifold $\mathcal{M}$ at a point $x$ is a mapping from the set  $\mathfrak{F}_x(\mathcal{M})$ of smooth real-valued functions defined on a neighborhood of $x$ to $\mathbb{R}$ such that there exists a curve $\gamma$ on $\mathcal{M}$ realizing the tangent vector $\xi_x$, i.e., such that $\gamma(0)=x$, and
	\[
	\xi_x f = \dot{\gamma}(0)f \triangleq \left.\frac{\mathrm{d}(f(\gamma(t)))}{\mathrm{d}t}\right\rvert_{t=0}, \; \forall \, f \in  \mathfrak{F}_x(\mathcal{M});
	\]
	see, e.g., Figure~\ref{fig:tangent_space}. The \emph{tangent space} $\mathcal{T}_x \mathcal{M}$ at $x\in\mathcal M$ is then the set of all tangent vectors to $\mathcal{M}$ at a point $x$. {The \emph{tangent bundle} is} the manifold $\mathcal{T} \mathcal{M}$ that assembles all the tangent vectors, i.e., {the disjoint union} $\mathcal{T} \mathcal{M} = {\bigsqcup_{x \in \mathcal{M}}} \mathcal{T}_x \mathcal{M}$.
\end{definition}

\begin{figure}[htbp]
	\centering
	\definecolor{c0000b0}{RGB}{0,0,176}
	\definecolor{cff5500}{RGB}{255,85,0}
	\definecolor{cff0000}{RGB}{255,0,0}
	\definecolor{cfe0000}{RGB}{254,0,0}
	\definecolor{c0000e3}{RGB}{0,0,227}
	\definecolor{c0000ff}{RGB}{0,0,255}
	\definecolor{c0000cc}{RGB}{0,0,204}
	\def \globalscale {0.500000}
	\begin{tikzpicture}[y=0.80pt, x=0.80pt, yscale=-\globalscale, xscale=\globalscale, inner sep=0pt, outer sep=0pt]
		\begin{scope}[cm={{1.33333,0.0,0.0,-1.33333,(0.0,317.333)}}]
			\path[draw=black,fill=c0000b0,fill opacity=0.100,line cap=butt,line
			join=round,line width=0.960pt,miter limit=10.00] (97.8672,237.3980) --
			(0.6020,155.5270) -- (259.5160,23.2730) -- (346.2850,122.6410) -- cycle;
			\path[cm={{0.75,0.0,0.0,-0.75,(0.0,238.0)}},draw=black,fill=cff5500,fill
			opacity=0.100,line cap=butt,line join=round,line width=1.280pt,miter
			limit=10.00] (254.9824,80.8730) .. controls (223.3707,81.1902) and
			(195.5078,97.4730) .. (166.0996,118.8438) .. controls (152.5009,128.8851) and
			(120.0000,160.9902) .. (120.0000,160.9902) -- (45.8164,248.4590) .. controls
			(153.5850,362.1742) and (377.6870,280.3265) .. (453.5156,268.0938) .. controls
			(401.6404,235.3896) and (379.6382,88.0933) .. (277.5312,83.3379) .. controls
			(269.7720,81.5768) and (262.2774,80.7999) .. (254.9824,80.8730) -- cycle;
			\path[draw=cff0000,dash pattern=on 0.80pt off 2.40pt,line cap=butt,line
			join=round,line width=0.960pt,miter limit=10.00] (69.9840,49.1449) .. controls
			(124.6410,105.8670) and (141.8240,120.9840) .. (155.5940,129.6450) .. controls
			(169.3590,138.3050) and (179.7110,140.5120) .. (189.5120,140.0980) .. controls
			(199.3090,139.6840) and (208.5590,136.6480) .. (219.5310,129.6760) .. controls
			(230.5040,122.7070) and (243.1990,111.8050) .. (257.5200,95.7930) .. controls
			(271.8400,79.7852) and (287.7810,58.6641) .. (287.7810,58.6641);
			\path[fill=black,even odd rule,line width=0.080pt] (69.9840,49.1449) --
			(76.5199,52.5660) -- (73.1641,55.8012);
			\path[draw=cff0000,line cap=butt,line join=round,line width=0.960pt,miter
			limit=10.00] (69.9840,49.1449) -- (76.5199,52.5660) -- (73.1641,55.8012) --
			cycle;
			\path[draw=cff0000,dash pattern=on 0.80pt off 2.40pt,line cap=butt,line
			join=round,line width=0.960pt,miter limit=10.00] (146.5860,159.6950) ..
			controls (171.0160,156.8010) and (179.2970,148.5160) .. (184.2660,141.8910) ..
			controls (189.2340,135.2660) and (190.8910,130.2970) .. (190.7890,125.1250) ..
			controls (190.6840,119.9490) and (188.8200,114.5660) .. (178.8830,108.7700);
			\path[fill=black,even odd rule,line width=0.080pt] (146.5860,159.6950) --
			(153.2620,156.5590) -- (153.8130,161.1880);
			\path[draw=cfe0000,line cap=butt,line join=round,line width=0.960pt,miter
			limit=10.00] (146.5860,159.6950) -- (153.2620,156.5590) -- (153.8130,161.1880)
			-- cycle;
			\path[fill=cff0000,even odd rule,line width=0.320pt,miter limit=10.00]
			(187.1630,140.2329) .. controls (187.1630,144.0999) and (181.3680,144.0999) ..
			(181.3680,140.2329) .. controls (181.3680,136.3738) and (187.1630,136.3738) ..
			(187.1630,140.2329);
			\path[draw=c0000e3,line cap=butt,line join=round,line width=0.960pt,miter
			limit=10.00] (183.9220,141.1450) -- (112.7030,121.6840);
			\path[fill=black,even odd rule,line width=0.080pt] (112.7030,121.6840) --
			(120.0700,121.2810) -- (118.8440,125.7770);
			\path[draw=c0000e3,line cap=butt,line join=round,line width=0.960pt,miter
			limit=10.00] (112.7030,121.6840) -- (120.0700,121.2810) -- (118.8440,125.7770)
			-- cycle;
			\path[draw=c0000ff,fill=c0000cc,fill opacity=0.100,draw opacity=0.743,line
			cap=butt,line join=round,line width=0.960pt,miter limit=10.00]
			(184.3360,141.5590) -- (155.7700,186.6910);
			\path[fill=black,even odd rule,line width=0.080pt] (155.7700,186.6910) --
			(157.5430,179.5310) -- (161.4800,182.0230);
			\path[draw=c0000ff,fill=c0000cc,fill opacity=0.100,draw opacity=0.743,line
			cap=butt,line join=round,line width=0.960pt,miter limit=10.00]
			(155.7700,186.6910) -- (157.5430,179.5310) -- (161.4800,182.0230) -- cycle;
			\path[xscale=1.000,yscale=-1.000,line width=0.600pt] (64.0917,-38.1447)
			node[above right] (text20223) {$x_0(t)$};
			\path[xscale=1.000,yscale=-1.000,line width=0.600pt] (167.9969,-99.1264)
			node[above right] (text23109) {$x_1(t)$};
			\path[xscale=1.000,yscale=-1.000,line width=0.600pt] (155.7700,-186.6910)
			node[above right] (text26097) {$\xi_x^1$};
			\path[xscale=1.000,yscale=-1.000,line width=0.600pt] (91.8765,-129.0046)
			node[above right] (text33699) {$\xi_x^2$};
			\path[xscale=1.000,yscale=-1.000,line width=0.600pt] (186.5802,-140.5539)
			node[above right] (text36203) {$A$};
		\end{scope}
	\end{tikzpicture}
	
	\caption{Tangent space (opaque blue) of a bi-dimensional manifold embedded (red) in $\mathbb{R}^3$. The tangent space $\mathcal{T}_x\mathcal{M}$ is computed by taking derivatives of the curves $x_0(t)$ and $x_1(t)$ (red dotted lines) going through ${A}$ at the origin.}
	\label{fig:tangent_space}
\end{figure}
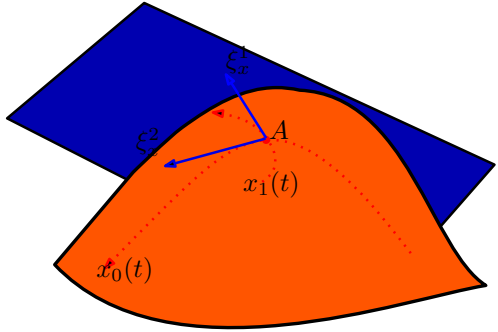
{To further characterize the tangent at a point in the case of an embedded manifold, such as the ones we are interested in, we also report the following result.}
{\begin{theorem}[{\cite[Theorem 3.15]{MR4533407}}]\label{thm:tangent_characterization}
    Let $\mathcal{M}$ be an embedded submanifold of $\mathcal{E}$. Consider $x \in \mathcal{M}$ and the set $\mathcal{T}_x \mathcal{M}$ from Definition~\ref{def:tangent_vector}. If $\mathcal{M}$ is an open submanifold, then $\mathcal{T}_x \mathcal{M}=\mathcal{E}$. Otherwise $\mathcal{T}_x \mathcal{M} = \ker \mathrm{D}h(x)$ with $h$ any local defining function at $x$.
\end{theorem}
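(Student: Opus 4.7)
The plan is to split the statement along the dichotomy in Definition~\ref{ref:manifold-definition} and, in the non-open case, prove the two set inclusions $\mathcal{T}_x\mathcal{M}\subseteq\ker\mathrm{D}h(x)$ and $\ker\mathrm{D}h(x)\subseteq\mathcal{T}_x\mathcal{M}$ separately, with the reverse inclusion being the substantial point.

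First I would dispatch the open case. If $n=d$ and $\mathcal{M}$ is open in $\mathcal{E}$, then given any $v\in\mathcal{E}$ the straight line $\gamma(t)=x+tv$ stays in $\mathcal{M}$ for $|t|$ sufficiently small, and since $\dot\gamma(0)=v$ it realizes $v$ as a tangent vector at $x$ in the sense of Definition~\ref{def:tangent_vector}. Conversely, every curve on $\mathcal{M}$ is a curve in the ambient linear space $\mathcal{E}$, so its derivative at $0$ lies in $\mathcal{E}$. Hence $\mathcal{T}_x\mathcal{M}=\mathcal{E}$.

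For the non-open case, let $h:U\to\mathbb{R}^k$ be a local defining function at $x$. The inclusion $\mathcal{T}_x\mathcal{M}\subseteq\ker\mathrm{D}h(x)$ follows by a direct chain-rule argument: for any curve $\gamma$ on $\mathcal{M}$ with $\gamma(0)=x$, continuity puts $\gamma(t)$ in $U\cap\mathcal{M}$ for small $t$, so $h\circ\gamma\equiv 0$ there, and differentiation at $t=0$ yields $\mathrm{D}h(x)\dot\gamma(0)=0$. The harder inclusion $\ker\mathrm{D}h(x)\subseteq\mathcal{T}_x\mathcal{M}$ is where the rank hypothesis on $\mathrm{D}h(x)$ is used in an essential way. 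Given $v\in\ker\mathrm{D}h(x)$, I would construct a curve $\gamma$ on $\mathcal{M}$ with $\gamma(0)=x$ and $\dot\gamma(0)=v$ using the implicit function theorem: choose a complementary subspace $W\subseteq\mathcal{E}$ of dimension $k$ such that $\mathrm{D}h(x)|_W:W\to\mathbb{R}^k$ is invertible, decompose points of $\mathcal{E}$ near $x$ as $x+u+w$ with $u\in\ker\mathrm{D}h(x)$ and $w\in W$, and apply the implicit function theorem to the equation $h(x+u+w)=0$ in order to solve $w=\varphi(u)$ as a smooth function of $u$ with $\varphi(0)=0$ and $\mathrm{D}\varphi(0)=0$. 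Setting $\gamma(t)=x+tv+\varphi(tv)$ then gives a smooth curve lying entirely in $\mathcal{M}$, with $\gamma(0)=x$ and, by the chain rule, $\dot\gamma(0)=v+\mathrm{D}\varphi(0)v=v$, finishing the inclusion.

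The main obstacle is the reverse inclusion: exhibiting, for each element of $\ker\mathrm{D}h(x)$, an actual admissible curve on $\mathcal{M}$. The algebraic condition $\mathrm{D}h(x)v=0$ is not by itself enough; one genuinely needs the submersion property $\operatorname{rank}\mathrm{D}h(x)=k$ in order to invoke the implicit function theorem and solve the nonlinear equation $h=0$ locally as a graph. Once that graph representation is available, the routine verifications (that $\gamma$ is smooth, lies in $\mathcal{M}\cap U$, and has the correct velocity) follow mechanically, and independence from the particular choice of $h$ is automatic because the kernel characterization has been identified with the intrinsic object $\mathcal{T}_x\mathcal{M}$.
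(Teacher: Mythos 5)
Your proof is correct and is the standard argument: the open case by straight lines, the forward inclusion by differentiating $h\circ\gamma\equiv 0$, and the reverse inclusion by using the full-rank hypothesis and the implicit function theorem to realize $\mathcal{M}$ locally as a graph over $\ker\mathrm{D}h(x)$ and thereby produce a curve with prescribed velocity. The paper itself does not prove this statement --- it imports it verbatim from \cite[Theorem~3.15]{MR4533407} --- and your argument matches the proof given in that reference in all essentials.
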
}

A Riemannian manifold $\mathcal{M}$ is a manifold equipped with a positive-definite inner product on its tangent space, i.e., {$\langle \xi_x, \eta_x\rangle_x$, for any $\xi_x,\eta_x\in\mathcal{T}_x\mathcal{M}$.
Such a \emph{metric}, called Riemannian metric, induces the norm $\| \xi_x \|_x=\sqrt{\langle \xi_x, \xi_x\rangle_x}$, for any $\xi_x\in\mathcal{T}_x\mathcal{M}$.}

On the Riemannian manifold, we can define the 
minimization problem
{\begin{equation}\label{eq:genopt}
	\displaystyle \arg\min_{ x  \in \mathcal{M}} f(x),
\end{equation}
}
where $f:\mathcal{M}\to \mathbb{R}$ is a suitable smooth function. 
\subsection{Optimization methods}\label{sec:optstoc}
Optimization methods for solving \eqref{eq:genopt} on Riemannian manifolds use local pull-back from the tangent spaces {$\mathcal{T}_x \mathcal{M}$} to the manifold $\mathcal{M}$ to produce a sequence of iterates, which can be interpreted as iterates moving along specific curves on the manifold, see the representation in Figure~\ref{fig:basic_idea}.
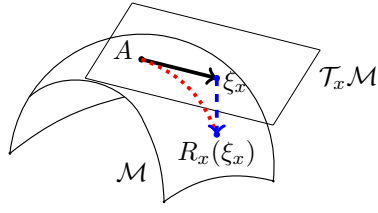
\begin{figure}[htbp]
	\centering
	\begin{tikzpicture}[scale=0.25]
		\filldraw (0,0) circle (1.35pt);
		\filldraw (14.01,-1.485) circle (1.05pt);
		\filldraw (8.195,-2.585) circle (1.15pt);
		\draw [line width=0.15mm, black ] (0,0) to [bend left=45] (7.8,6.3) to [bend left=45] (14,-1.5);
		\draw [line width=0.15mm, black ] (1.035,3) to [bend left=55] (7.1,1.7) to [bend left=12] (8.2,-2.6);
		\draw [line width=0.15mm, black ] (8.2,-2.57) to [bend left=25] (14,-1.47);
		\draw [line width=0.15mm, black ] (0,0) to [bend left=12] (6.1,3);
		\node at (6.5,-1) {$\mathcal{M}$};
		\filldraw (7,5) circle (4pt);				
		\node at (6,5.5) {$A$};
		\draw [line width=0.10mm] (4,4) -- ++(10,-2.5);
		\draw [line width=0.10mm] (4,4) -- ++(2.5,4);
		\draw [line width=0.10mm] (6.5,8) -- ++(10,-2.5);
		\draw [line width=0.10mm] (14,1.5) -- ++(2.5,4);
		\node at (18,4) {$\mathcal{T}_x \mathcal{M}$};
		\draw [->,line width=0.5mm](7,5) -- ++(4,-1);	
		\filldraw[blue] (11,4) circle (4pt);
		\node at (12,3.8) {$\xi_x$};
		\draw [dotted,line width=0.5mm,red] (7,5) to [bend left=25] node{.}(11,1);
		\filldraw[blue] (11,1) circle (4pt);
		\node at (11,0) {$R_{x}(\xi_x)$};
        \draw[dashed,blue,->,line width=0.5mm] (11,4) to (11,1);
	\end{tikzpicture}
	\caption{The basic idea of Riemannian optimization algorithms: evolving the iterates using local pull-back (retraction) from the tangent space to the manifold.}
	\label{fig:basic_idea}
\end{figure}
What distinguishes the different algorithms is how the new point on the tangent space is determined. In general, it is possible to adapt the different classes of optimization methods in this new context, consider, e.g., first order methods, Newton and Quasi-Newton methods, or Trust-Region methods, see, \cite[Chapters~6,7 and 8]{AbsilBook} for a complete discussion. 

In order to define these methods, we need to recall some differential structures for functions taking values on the manifold. 
We denote by {$\mathrm{D} f(x)[\xi]$}  the directional derivative of $f$ given by:
{\[
\mathrm{D} f(x)[\xi]=\lim_{t\to 0}\frac{f(x+t\xi)-f(x)}{t}.
\]
}

\begin{definition}[Affine connection]\label{def:affine-connection}
	An \emph{affine connection} $\nabla \,:\,\mathcal{T}\mathcal{M} \times \mathcal{T}\mathcal{M} \to \mathcal{T}\mathcal{M}$ is a map that associates to each $(\eta,\xi)$ in the tangent bundle (Definition~\ref{def:tangent_vector}) the tangent vector $\nabla_\eta \xi$ satisfying for all  $a,b \in \mathbb{R}$, and smooth $f,g: \mathcal{M} \longrightarrow \mathbb{R}$:
	\begin{itemize}
		\item $\nabla_{f(\eta)+g(\chi)}\xi =  f(\nabla_\eta \xi)+ g(\nabla_\chi \xi)$
		\item $\nabla_{\eta}(a\xi+b\varphi) = a\nabla_{\eta}\xi+b\nabla_{\eta}\varphi$
		\item $\nabla_{\eta}(f(\xi)) = \xi(f) \eta + f(\nabla_\eta \xi)$,
	\end{itemize}
	wherein the vector field $\xi$ acts on the function $f$ by derivation, that is 
	\[\xi(f)=\mathrm{D}(f)[\xi].\]
	We call \emph{Levi-Civita connection} the affine connection that preserves the Riemannian metric, i.e., the affine connection such that
	\begin{itemize}
		\item $\nabla_\eta \xi - \nabla_\xi \eta = [\eta,\xi] $ $\forall\,\eta,\xi \in \mathcal{T}\mathcal{M}$,
		\item $\chi \langle \eta,\xi \rangle = \langle \nabla_\chi \eta,\xi \rangle + \langle\eta ,  \nabla_\chi \xi \rangle$, $\forall\,\eta ,\xi ,\chi \in \mathcal{T} \mathcal{M}$,
	\end{itemize}
	where we are denoting with $[\cdot,\cdot]$ the Lie bracket
	\[
	[\xi,\eta]g = \xi(\eta (g)) - \eta(\xi (g)).
	\]
\end{definition}

\begin{definition}[Riemannian Gradient and Hessian]\label{def:riemannian-gradient-and-hessian}
	The \emph{Riemannian gradient} of $f$ at {$x$}, denoted by {$\operatorname{grad}f(x)$}, of a manifold $\mathcal{M}$ is defined as the unique vector in {$\mathcal{T}_x\mathcal{M}$} that satisfies:
	{\begin{align*}
		\langle \operatorname{grad}f(x), \xi_x \rangle_x = \mathrm{D} f(x) [\xi_x],\ \forall \ \xi_x \in \mathcal{T}_x\mathcal{M}.
	\end{align*}}
	The \emph{Riemannian Hessian} of $f$ at {$x$}, denoted by {$\operatorname{{hess}}f(x)$}, of a manifold $\mathcal{M}$ is a mapping from {$\mathcal{T}_x\mathcal{M}$} into itself defined by:
	{\begin{align*}
		\operatorname{{hess}}f(x)[\xi_x] = \nabla_{\xi_x} \operatorname{grad}f(x), \ \forall \ \xi_x \in \mathcal{T}_x\mathcal{M},
	\end{align*}}
	where {$\operatorname{grad}f(x)$} is the Riemannian gradient and $\nabla$ is the {Levi-Civita connection} on~$\mathcal{M}$.
\end{definition}

\begin{definition}[Retraction]\label{def:retraction}
	A retraction on a manifold $\mathcal{M}$ is a smooth map $R$ from the tangent bundle {$\mathcal{T} \mathcal{M} = \bigsqcup_{x \in \mathcal{M}} \mathcal{T}_x\mathcal{M}$, i.e., the disjoint union of the tangent spaces,} onto $\mathcal{M}$. For all $x \in \mathcal{M}$ the restriction of $R$ to $\mathcal{T}_x \mathcal{M}$, denoted by $R_x$, satisfies the following properties:
	\begin{itemize}
		\item $R_x(0) = x$ (centering);
		\item The curve $\gamma_{\xi_x}(\tau) = R_x(\tau \xi_x)$ satisfies
		\[
		\left.\frac{d \gamma_{\xi_x}(\tau)}{d\tau}\right|_{\tau = 0} = \xi_x, \quad\,\forall\, \xi_x \in \mathcal{T}_x \mathcal{M}. \qquad \text{(local rigidity)}
		\]
	\end{itemize}
\end{definition}

{To characterize retractions for embedded manifolds  we will use the following result, that we report here for the sake of completeness.}

\begin{theorem}[{\cite[Proposition 4.1.2]{AbsilBook}}]\label{thm:retraction-on-embedded-manifold}
	Let $\mathcal{M}$ be an embedded manifold of the Euclidean space $\mathcal{E}$ and let $\mathcal{N}$ be an abstract manifold such that $\dim(\mathcal{M}) + \dim(\mathcal{N}) = \dim(\mathcal{E})$. Assume that there is a diffeomorphism
	\begin{align*}
		\phi: \mathcal{M} \times \mathcal{N} &\longrightarrow \mathcal{E}^*  \\
		(A,B) &\longmapsto \phi(A,B)
	\end{align*}
	where $\mathcal{E}^*$ is an open subset of $\mathcal{E}$, with a neutral element $I \in \mathcal{N}$ satisfying
	\begin{align*}
		\phi(A,I) = A, \ \forall \ A \in \mathcal{M}.
	\end{align*}
	Under the above assumption, the mapping 
	\begin{align*}
		R_x: \mathcal{T}_x\mathcal{M} &\longrightarrow \mathcal{M}  \\
		\xi_x  &\longmapsto  R_x(\xi_x) = \pi_1(\phi^{-1}(x+\xi_x)),
	\end{align*}
	where $\pi_1: \mathcal{M} \times \mathcal{N} \longrightarrow \mathcal{M}: (A,B) \longmapsto A$ is the projection onto the first component, defines a retraction on the manifold $\mathcal{M}$ for all $x \in \mathcal{M}$ and $\xi_x$ in the neighborhood of $0_x$.
\end{theorem}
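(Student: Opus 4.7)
The plan is to verify in turn the three requirements of Definition~\ref{def:retraction}: that $R_x$ is a well-defined smooth map on a neighborhood of $0_x$ in $\mathcal{T}_x\mathcal{M}$, that $R_x(0_x)=x$, and that $\left.\frac{d}{d\tau}R_x(\tau \xi_x)\right|_{\tau=0}=\xi_x$ for every $\xi_x \in \mathcal{T}_x\mathcal{M}$. Smoothness and well-definedness come essentially for free: because $\phi(x,I)=x$ we have $x \in \mathcal{E}^\ast$, so by openness of $\mathcal{E}^\ast$ the affine map $\xi_x \mapsto x+\xi_x$ lands in $\mathcal{E}^\ast$ for $\xi_x$ in a neighborhood of $0_x$, and $R_x$ is then the composition of the smooth maps $\xi_x \mapsto x+\xi_x$, $\phi^{-1}$, and $\pi_1$. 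Centering reduces to a one-line check: $\phi(x,I)=x$ gives $\phi^{-1}(x)=(x,I)$, hence $R_x(0_x)=\pi_1(x,I)=x$.

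The substantive step is local rigidity. The plan is to write $\phi^{-1}(x+\tau\xi_x)=(A(\tau),B(\tau))$ so that $(A(0),B(0))=(x,I)$ and $R_x(\tau\xi_x)=A(\tau)$, and then differentiate the identity $\phi(A(\tau),B(\tau))=x+\tau\xi_x$ at $\tau=0$ to obtain
\[
\mathrm{D}_1\phi(x,I)[A'(0)]+\mathrm{D}_2\phi(x,I)[B'(0)]=\xi_x,
\]
where $\mathrm{D}_1,\mathrm{D}_2$ denote the partial differentials in the first and second arguments. What is needed is then to read off that $A'(0)=\xi_x$.

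The key algebraic observation to be exploited is that the neutral-element property $\phi(\cdot,I)=\mathrm{id}_\mathcal{M}$ forces $\mathrm{D}_1\phi(x,I)$ to be the canonical inclusion $\mathcal{T}_x\mathcal{M}\hookrightarrow\mathcal{E}$. Since $\phi$ is a diffeomorphism onto the open set $\mathcal{E}^\ast\subset\mathcal{E}$, the differential $\mathrm{D}\phi(x,I)\colon\mathcal{T}_x\mathcal{M}\oplus\mathcal{T}_I\mathcal{N}\to\mathcal{E}$ is a linear isomorphism, and the dimension count $\dim(\mathcal{M})+\dim(\mathcal{N})=\dim(\mathcal{E})$ ensures that its image of $\mathcal{T}_I\mathcal{N}$ is a linear complement of $\mathcal{T}_x\mathcal{M}$ inside $\mathcal{E}$. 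Decomposing $\xi_x\in\mathcal{T}_x\mathcal{M}$ along this direct sum, the uniqueness of the decomposition yields $A'(0)=\xi_x$ and $\mathrm{D}_2\phi(x,I)[B'(0)]=0$, whence $B'(0)=0$ by injectivity of $\mathrm{D}_2\phi(x,I)$. Therefore $\left.\frac{d}{d\tau}R_x(\tau\xi_x)\right|_{\tau=0}=A'(0)=\xi_x$, completing the verification.

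The main obstacle I anticipate is precisely this last step: making the splitting argument rigorous requires being careful about the identification of $\mathcal{T}_x\mathcal{E}$ with $\mathcal{E}$, and about the fact that $\mathrm{D}_1\phi(x,I)$ on $\mathcal{T}_x\mathcal{M}$ truly coincides with the inclusion (which is where the neutral-element hypothesis enters crucially). Once this is secured, the remaining pieces are formal consequences of the implicit function theorem applied through the diffeomorphism $\phi$.
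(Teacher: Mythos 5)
Your argument is correct, but note that the paper itself offers no proof of this statement: it is quoted verbatim from Absil--Mahony--Sepulchre (Proposition 4.1.2 of \cite{AbsilBook}) ``for the sake of completeness,'' so there is no in-paper proof to compare against. Your sketch reproduces the standard argument from that reference: smoothness and centering are immediate from $\phi(x,I)=x$ and openness of $\mathcal{E}^*$, and local rigidity follows because the neutral-element identity forces $\mathrm{D}_1\phi(x,I)$ to be the inclusion of $\mathcal{T}_x\mathcal{M}$ into $\mathcal{E}$, while injectivity of $\mathrm{D}\phi(x,I)$ together with the dimension count makes $\mathrm{D}_2\phi(x,I)[\mathcal{T}_I\mathcal{N}]$ a complement of $\mathcal{T}_x\mathcal{M}$, so the unique decomposition of $\xi_x$ gives $A'(0)=\xi_x$. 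The step you flag as delicate is handled exactly as you propose, so nothing is missing.
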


The general sketch of Newton's method for solving~\eqref{eq:genopt} on a Riemannian manifold is synthesized in Algorithm~\ref{alg:newton}. For other optimization methods, we refer to \cite[Chapters 4,6--8]{AbsilBook}.
\begin{algorithm2e}[htbp]
	\caption{Newton's method for~\eqref{eq:genopt} on a Riemannian Manifold $\mathcal{M}$}\label{alg:newton}
	\KwData{Manifold $\mathcal{M}$, function $f$, retraction $R$, affine connection $\nabla$, and convergence tolerance $\epsilon$}
	Initial guess $A \in \mathcal{M}$\;
	\While{$||\operatorname{grad}f(x)||_{x} \geq \epsilon$}{
		Find descent direction $\xi_x \in \mathcal{T}_x\mathcal{M}$ such that:
		\begin{align*}
			\operatorname{{hess}}f(x)[\xi_x] = -\operatorname{grad}f(x),
		\end{align*}
		wherein $\operatorname{{hess}}f(x)[\xi_x] = \nabla_{\xi_x} \operatorname{grad}f(x)$\;
		Retract $A = R_x(\xi_x)$\;
	}
	\KwResult{Output $A$.}
\end{algorithm2e}

\section{Stochastic \texorpdfstring{$p$th}{pth} root approximation via Riemannian optimization}\label{sec:stochastic-old-methods}

Here we propose an approach based on Riemannian optimization, to numerically approximate the solution of problem~\ref{alg:a2}.

Indeed, by following \cite{Douik8861409}, we endow the set of  stochastic matrices with positive entries, namely
\[
\mathbb{S}_n = \{ S \in \mathbb{R}^{n \times n} \,:\; S \mathbf{1} = \mathbf{1}, \; S > 0 \},
\]
with both a manifold structure (in the sense of Definition~\ref{ref:manifold-definition}) and an \emph{intrinsic metric}, making it a Riemannian manifold~\cite{Douik8861409,7182334}, known as multinomial manifold. The solution of problem~\ref{alg:a2} is approximated within such a manifold.

On
$\mathbb{S}_n$, we need to define 
the \emph{tangent space} $\mathcal{T}_S \mathbb{S}_n$.
By applying the definition, we find that if $S(t)$ is a smooth curve such that $S(0) = S$  and  $S(t)\in \mathbb{S}_n $ for any $t$ in a neighborhood of the origin, then the curve satisfies
\[
S(t) \mathbf{1} = \mathbf{1} \, \Rightarrow \, \dot{S}(t)\mathbf{1} = \mathbf{0},
\]
thus
$
\mathcal{T}_S \mathbb{S}_n \subseteq \{ \xi_S \in \mathbb{R}^{n \times n}\,:\; \xi_S \mathbf{1} = \mathbf{0} \},
$
while the opposite inclusion holds by comparing the number of degrees of freedom of the full space, and of the tangent space (see \cite[Proposition~1]{Douik8861409} { and Theorem~\ref{thm:tangent_characterization}}), so that
\[
\mathcal{T}_S \mathbb{S}_n = \{ \xi_S \in \mathbb{R}^{n \times n}\,:\; \xi_S \mathbf{1} = \mathbf{0} \}.
\]
Therefore, $\mathbb{S}_n$ can be extended to be a Riemannian manifold by adding a positive-definite inner product on its tangent space at every point. This metric is given by the \emph{Fisher
	information metric}
\begin{equation}\label{eq:fisher-metric}
	\begin{split}
		g_A(\xi_S,\eta_S) = 
		&\; \langle \xi_S, \eta_S \rangle_S 
		=  \sum_{i,j=1}^{n} \frac{ (\xi_S)_{i,j} (\eta_S)_{i,j} }{S_{i,j}} \\ = &\; \operatorname{Trace}( (\xi_S \oslash S) \eta_S^T ), \;\forall\, \xi_S,\eta_S \in \mathcal{T}_S \mathbb{S}_n.
	\end{split}
\end{equation}

On the multinomial manifold $\mathbb{S}_n$, we can define the analogous of problem~\ref{alg:a2} as follows:
\begin{equation}\label{eq:the_rewritten_problem}
	\text{Given }A \in \mathbb{S}_n^0 \text{ find } X = 
	\displaystyle \arg\min_{ X  \in \mathbb{S}_n } \frac{1}{2} \| X^p - A\|_F^2.
\end{equation}
The two substantial differences with respect to the standard formulation are, on the one hand, the explicit request to have the elements of $X > 0$, on the other hand, the possibility of exploiting the Riemannian manifold structure to compute the solution of problem \eqref{eq:the_rewritten_problem}. {Let us also stress that the constraint $X > 0$, needed for the definition of the metric, makes difficult obtaining general conditions of existence for the solution of~\eqref{eq:the_rewritten_problem}. In the numerical experiments, we investigated cases in which the target matrix does not satisfy this constraint.}

In particular, since the multinomial manifold is already defined in the MANOPT library~\cite{manopt},
to apply for instance the Riemannian version of the Trust Region optimization procedure, we can use a few lines of MANOPT code. 
Indeed, given a stochastic matrix \mintinline{matlab}{A} with \mintinline{matlab}{n = size(A,1)}, and  given an integer \mintinline{matlab}{p}, it is sufficient to write
\begin{minted}[bgcolor=bg,fontsize=\small]{matlab}
%
manifold = multinomialfactory(n,n); %
%
problem.M = manifold;
problem.cost = @(x) 0.5*cnormsqfro(mpower(x,p).'-A);
problem = manoptAD(problem);
options.tolgradnorm = 1e-7;
[x, xcost, info, options] = trustregions(problem,[],options);
\end{minted}
Some attention is needed since in the multinomial manifold of the MANOPT library, matrices are column stochastic instead of row stochastic. The variable \mintinline{matlab}{x}
contains an approximation to the solution of \eqref{eq:the_rewritten_problem}.

To illustrate the behavior of this approach, we consider the following example from \cite[Fact~4.10]{HighamLijing2010}.
\begin{example}\label{example:circulant-example}
	Let us consider the matrix 
	\[
	A(a) = \frac{1}{3} \begin{bmatrix}
		1-2a & 1+a & 1+a \\
		1+a & 1-2a & 1+a \\
		1+a & 1+a & 1-2a \\
	\end{bmatrix}, \quad 0 < a \leq \frac{1}{3}, 
	\]
	having eigenvalues $\{1,-a,-a\}$.
	This matrix is circulant and symmetric (and therefore doubly stochastic),  and has only {one} stochastic square root, which is not a primary function, given by
	\[
	X = \frac{1}{3} \begin{bmatrix}
		1 & 1+\sqrt{3a} & 1-\sqrt{3a} \\
		1-\sqrt{3a} & 1 & 1 + \sqrt{3a} \\
		1+\sqrt{3a} & 1-\sqrt{3a} & 1 \\
	\end{bmatrix}. %
	\]
	{This matrix is doubly stochastic and} its eigenvalues are $\{1,i\sqrt{a},-i\sqrt{a}\}$.
	Setting $a=\nicefrac{1}{6}$, an application of the optimization strategy produces
	\[
	\tilde{X} =
	\begin{bmatrix}
		0.3179  &  0.1158  &  0.5663 \\
		0.5885 &   0.3299  &  0.0816 \\
		0.0936 &   0.5543  &  0.3522 \\
	\end{bmatrix}, \; \|\tilde{X}^2 - A\|_F = 1.3102\times 10^{-12}.
	\]
	In this case the steady state vector of $\tilde X$ has 
	an absolute difference with respect to the steady state vector of $A(a)$ of $1.5701 \times 10^{-16}$, indeed this is mostly due to the fact that {there exists a stochastic square root, which is also doubly stochastic, and the optimization method converges to such a matrix.} 
\end{example}

\begin{example}
	We consider the matrix \verb|Pajek/GD96_c| from the SuiteSparse matrix collection as the adjacency matrix $A$ of an undirected graph normalized by the inverse of the sum of the row entries, then $A$ is a stochastic matrix, and we can apply the optimization strategy for $p=2$. In this case, there doesn't seem to be a stochastic square root matrix to converge to, since the residual of the optimization procedure is $\|X^2 - A\|_F = 0.52$. Furthermore, as shown in Figure~\ref{fig:eigenfailure}, the proposed approximation has a different stationary distribution with respect to $A$, therefore the ``half step'' linked to it cannot converge to the same stationary state of the global system. For this reason, in the next section we will focus on the computation of an approximation of the root that preserves the stationary distribution. 
	
	\begin{figure}[htbp]
		\centering
		\definecolor{mycolor1}{rgb}{0.00000,0.44700,0.74100}%
\definecolor{mycolor2}{rgb}{0.85000,0.32500,0.09800}%
\begin{tikzpicture}

\begin{axis}[%
width=0.85\columnwidth,
height=1.5in,
at={(0in,0in)},
scale only axis,
xmin=1,
xmax=65,
xlabel style={font=\color{white!15!black}},
xlabel={node},
ymin=0.008,
ymax=0.026,
ylabel style={font=\color{white!15!black}},
ylabel={Stationary Distribution},
axis background/.style={fill=white},
legend style={legend cell align=left, align=left, draw=none,fill=none}
]
\addplot [color=mycolor1, only marks, mark=x, mark options={solid, mycolor1}]
  table[row sep=crcr]{%
1	0.0159999999999988\\
2	0.0159999999999988\\
3	0.0160000000000017\\
4	0.0199999999999985\\
5	0.00800000000000076\\
6	0.0160000000000011\\
7	0.0160000000000015\\
8	0.0200000000000018\\
9	0.0120000000000009\\
10	0.0159999999999988\\
11	0.0159999999999987\\
12	0.0159999999999987\\
13	0.020000000000002\\
14	0.0199999999999985\\
15	0.00800000000000039\\
16	0.0160000000000016\\
17	0.0200000000000016\\
18	0.0200000000000012\\
19	0.0159999999999988\\
20	0.00800000000000081\\
21	0.0160000000000017\\
22	0.0159999999999987\\
23	0.0120000000000008\\
24	0.00800000000000079\\
25	0.0159999999999988\\
26	0.0159999999999987\\
27	0.0120000000000009\\
28	0.0199999999999985\\
29	0.0160000000000016\\
30	0.0159999999999988\\
31	0.0200000000000018\\
32	0.0119999999999993\\
33	0.0159999999999993\\
34	0.0120000000000011\\
35	0.0120000000000004\\
36	0.0199999999999985\\
37	0.0120000000000004\\
38	0.0159999999999989\\
39	0.0159999999999988\\
40	0.00800000000000058\\
41	0.0160000000000014\\
42	0.0159999999999987\\
43	0.0159999999999988\\
44	0.0199999999999985\\
45	0.0160000000000016\\
46	0.0159999999999988\\
47	0.0120000000000012\\
48	0.012\\
49	0.0159999999999989\\
50	0.0159999999999987\\
51	0.0160000000000015\\
52	0.0159999999999988\\
53	0.0120000000000004\\
54	0.0120000000000011\\
55	0.0159999999999988\\
56	0.0159999999999988\\
57	0.0159999999999988\\
58	0.0200000000000021\\
59	0.0120000000000012\\
60	0.0159999999999986\\
61	0.0240000000000023\\
62	0.0159999999999987\\
63	0.0159999999999989\\
64	0.0159999999999988\\
65	0.012000000000001\\
};
\addlegendentry{Original Matrix}

\addplot [color=mycolor2, only marks, mark=o, mark options={solid, mycolor2}]
  table[row sep=crcr]{%
1	0.0137886791156765\\
2	0.0146806590344114\\
3	0.0167106213746025\\
4	0.0170746900645567\\
5	0.0132728714180811\\
6	0.0162998960009801\\
7	0.0188416674152903\\
8	0.0189780958604861\\
9	0.0152364305947037\\
10	0.0138095532395181\\
11	0.0151920670476253\\
12	0.0136149394075733\\
13	0.0163340680222918\\
14	0.0160476943609649\\
15	0.010964734276926\\
16	0.0181101722637148\\
17	0.0163784617809373\\
18	0.0180116913029981\\
19	0.0156789396932037\\
20	0.0116350438214403\\
21	0.0134429684856138\\
22	0.0138463567417979\\
23	0.0167334412380073\\
24	0.0118509712232048\\
25	0.0174088936765897\\
26	0.0170817905796831\\
27	0.0138953712568294\\
28	0.0205247944847037\\
29	0.0140818126284471\\
30	0.0170505905325335\\
31	0.0177028382416324\\
32	0.0132840809916283\\
33	0.0150091177189003\\
34	0.0172727872310109\\
35	0.0148387709311013\\
36	0.0184787766254901\\
37	0.0134487729893826\\
38	0.0173210442119884\\
39	0.0143906609046891\\
40	0.0115183314366056\\
41	0.0156872463109362\\
42	0.0146397674614303\\
43	0.0157733762614001\\
44	0.0185336373786293\\
45	0.0190736392188268\\
46	0.0123598820536111\\
47	0.0114158593569317\\
48	0.0156645071996193\\
49	0.016095303590539\\
50	0.0127442083426223\\
51	0.0181988881845562\\
52	0.015126874478586\\
53	0.0162786561176036\\
54	0.0137555934731068\\
55	0.0159182025503257\\
56	0.0135463227396602\\
57	0.0168022721463903\\
58	0.0162708214045688\\
59	0.0123433125081938\\
60	0.0151232031226331\\
61	0.017412084853589\\
62	0.0155451872748412\\
63	0.0137678343707588\\
64	0.0134018189692803\\
65	0.0146783524055392\\
};
\addlegendentry{Approximate root}

\end{axis}
\end{tikzpicture}%
		
		\cprotect\caption{Approximate stochastic square root for $A$ the normalized stochastic matrix obtained from {\verb|Pajek/GD96_c|} of SuiteSparse matrix collection. We depict the difference in the entries of the stationary distribution of the original matrix $A$ and its approximated root.}\label{fig:eigenfailure}
	\end{figure}
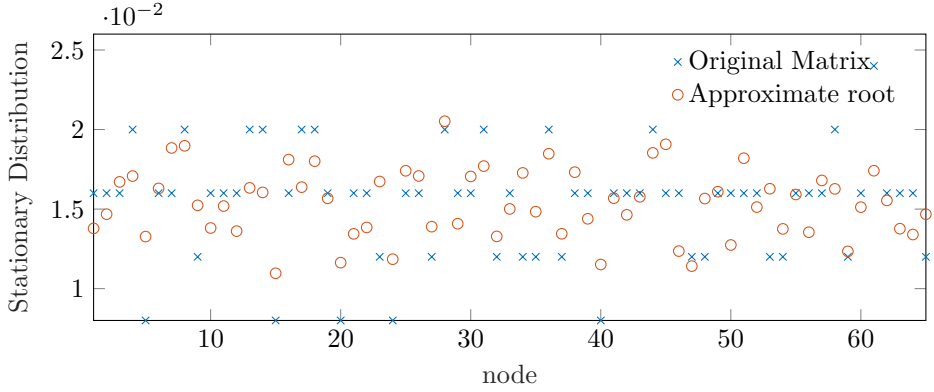
\end{example}

\section{Stochastic \texorpdfstring{$p$th}{pth} root approximation preserving the stationary distribution}\label{sec:stochastic-new-methods}
In this section we first introduce the manifold of positive stochastic matrices, having the same stationary distribution $\boldsymbol{\pi} > 0$, $\boldsymbol{\pi}^T\mathbf{1} = 1$. Then, given a stochastic matrix $A$ with stationary distribution $\boldsymbol{\pi}$,  we approximate its stochastic $p$th root on such a manifold. This way, the stochastic $p$th root approximation of $A$ shares with $A$ the stationary distribution.

\subsection{A new Riemannian manifold}
Let $\boldsymbol{\pi} \in \mathbb{R}^{n}$ be a positive vector such that $\boldsymbol{\pi}^T \mathbf{1} = 1$, and
define the set
\[
\mathbb{S}_n^{\boldsymbol{\pi}} = \{ S \in \mathbb{R}^{n \times n} \,:\; S \mathbf{1} = \mathbf{1}, \; \boldsymbol{\pi}^T S = \boldsymbol{\pi}^T, \; S > 0 \},
\]
i.e., $\mathbb{S}_n^{\boldsymbol{\pi}}$ is the set  of $n\times n$ positive stochastic matrices, having the same stationary distribution~$\boldsymbol{\pi}$.
{After proving that $\mathbb{S}_n^{\boldsymbol{\pi}}$ is a manifold, the analogous of problem~\ref{alg:a2} is rewritten as:
\begin{equation*}%
	\text{Given }A \in \mathbb{S}_n^0 \text{ and } \boldsymbol{\pi} \text{ s.t. } \boldsymbol{\pi}^T A = \boldsymbol{\pi}^T \text{ find } X = 
	\displaystyle \arg\min_{ X  \in  \mathbb{S}_n^{\boldsymbol{\pi}}} \frac{1}{2} \| X^p - A\|_F^2.
\end{equation*}}
By following the approach used in~\cite{Douik8861409} for the manifold of doubly stochastic matrices,  we may prove that  $\mathbb{S}_n^{\boldsymbol{\pi}}$ is an embedded manifold of $\mathbb{R}^{n \times n}$ of dimension $(n-1)^2$, since it is indeed generated by $2n-1$ linearly independent equations.

The following result characterizes the tangent space:

\begin{lemma}\label{lem:tangent-space}
	The tangent space  to $\mathbb{S}_n^{\boldsymbol{\pi}}$ at $S\in\mathbb{S}_n^{\boldsymbol{\pi}}$ is given by
	\begin{equation}\label{eq:tangent_space}
		\mathcal{T}_S \mathbb{S}_n^{\boldsymbol{\pi}} = \{ \xi_S \in \mathbb{R}^{n \times n}\,:\; \xi_S \mathbf{1} = \mathbf{0}, \; \boldsymbol{\pi}^T \xi_S = \mathbf{0} \}.
	\end{equation}
\end{lemma}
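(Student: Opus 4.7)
The plan is to derive the characterization of $\mathcal{T}_S \mathbb{S}_n^{\boldsymbol{\pi}}$ by applying Theorem~\ref{thm:tangent_characterization} with a suitable local defining function obtained from the equality constraints that cut out $\mathbb{S}_n^{\boldsymbol{\pi}}$ inside $\mathbb{R}^{n\times n}$. The positivity constraint $S>0$ is an open condition and therefore plays no role in identifying the tangent space beyond restricting to a neighborhood $U$ of $S$ in $\mathbb{R}^{n\times n}$ on which every matrix has strictly positive entries.

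First I would establish the easy inclusion $\mathcal{T}_S\mathbb{S}_n^{\boldsymbol{\pi}} \subseteq \{\xi_S\in\mathbb{R}^{n\times n}:\xi_S\mathbf{1}=\mathbf{0},\ \boldsymbol{\pi}^T\xi_S=\mathbf{0}\}$ by the curve characterization in Definition~\ref{def:tangent_vector}: any smooth curve $S(t)\subset \mathbb{S}_n^{\boldsymbol{\pi}}$ with $S(0)=S$ satisfies $S(t)\mathbf{1}=\mathbf{1}$ and $\boldsymbol{\pi}^T S(t)=\boldsymbol{\pi}^T$ identically in $t$, so differentiating at $t=0$ yields $\dot S(0)\mathbf{1}=\mathbf{0}$ and $\boldsymbol{\pi}^T\dot S(0)=\mathbf{0}$.

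For the reverse inclusion I would exhibit a local defining function and invoke Theorem~\ref{thm:tangent_characterization}. The natural candidate is the map $H:U\to\mathbb{R}^{2n}$ sending $S\mapsto(S\mathbf{1}-\mathbf{1},\,S^T\boldsymbol{\pi}-\boldsymbol{\pi})$; however these $2n$ scalar equations are not independent, because $\boldsymbol{\pi}^T(S\mathbf{1}-\mathbf{1})=\mathbf{1}^T(S^T\boldsymbol{\pi}-\boldsymbol{\pi})$ identically (both equal $\boldsymbol{\pi}^T S\mathbf{1}-1$), so one row of $H$ must be discarded. After deleting, say, the last component of $S^T\boldsymbol{\pi}-\boldsymbol{\pi}$, I obtain a smooth map $h:U\to\mathbb{R}^{2n-1}$ whose zero set coincides with $\mathbb{S}_n^{\boldsymbol{\pi}}\cap U$. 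The hard part of the argument will be checking that $\operatorname{rank}\mathrm{D}h(S)=2n-1$, which I plan to do by noting that the rows of $\mathrm{D}H(S)$ (viewing $\mathbb{R}^{n\times n}$-inputs as vectorized) are $\{\mathbf{e}_i\otimes\mathbf{1}^T\}_{i=1}^n$ and $\{\boldsymbol{\pi}^T\otimes\mathbf{e}_j^T\}_{j=1}^n$, and verifying that the only linear dependence among these $2n$ row-vectors is precisely the one identified above (this uses $\boldsymbol{\pi}>0$ and $\boldsymbol{\pi}^T\mathbf{1}=1$).

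Once $h$ is a bona fide local defining function, Theorem~\ref{thm:tangent_characterization} gives $\mathcal{T}_S\mathbb{S}_n^{\boldsymbol{\pi}}=\ker\mathrm{D}h(S)$, which is exactly $\{\xi_S:\xi_S\mathbf{1}=\mathbf{0},\ \boldsymbol{\pi}^T\xi_S=\mathbf{0}\}$ (the discarded equation is automatic on this kernel by the same identity used to remove it). As a side benefit, the same rank computation confirms that $\mathbb{S}_n^{\boldsymbol{\pi}}$ is an embedded submanifold of $\mathbb{R}^{n\times n}$ of dimension $n^2-(2n-1)=(n-1)^2$, consistent with the announced count of degrees of freedom and with $\dim\mathcal{T}_S\mathbb{S}_n^{\boldsymbol{\pi}}=(n-1)^2$.
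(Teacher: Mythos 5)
Your proof is correct and follows essentially the same route as the paper: the forward inclusion by differentiating curves in $\mathbb{S}_n^{\boldsymbol{\pi}}$ is identical, and your reverse inclusion via an explicit local defining function $h$ and Theorem~\ref{thm:tangent_characterization} makes rigorous the same ``$2n-1$ independent equations'' count that the paper invokes as a pure dimension comparison. Your identification of the unique linear dependence $\boldsymbol{\pi}^T(S\mathbf{1}-\mathbf{1})=\mathbf{1}^T(S^T\boldsymbol{\pi}-\boldsymbol{\pi})$ and the accompanying rank verification are details the paper leaves implicit, so your version is, if anything, slightly more complete.
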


\begin{proof}
	Let $S(t)$ be any smooth curve such that $S(0) = S$ and $S(t)\in \mathbb{S}_n^{\boldsymbol{\pi}}$ for $t$ in a  neighborhood of the origin. By differentiating, we find
	\begin{equation*}
	\begin{split}
	\phantom{\boldsymbol{\pi}}S(t)\mathbf{1} = \mathbf{1} \, \Rightarrow \, \phantom{\boldsymbol{\pi}^T}\dot{S}(t)\mathbf{1} = \mathbf{0},\\
	\boldsymbol{\pi}^T S(t)\phantom{\mathbf{1}} = \boldsymbol{\pi}^T \, \Rightarrow \, \boldsymbol{\pi}^T \dot{S}(t) \phantom{\mathbf{1}} = \mathbf{0},
\end{split}
	\end{equation*}
	thus we have
	\begin{equation*}
		\mathcal{T}_S \mathbb{S}_n^{\boldsymbol{\pi}} \subseteq \{ \xi_S \in \mathbb{R}^{n \times n}\,:\; \xi_S \mathbf{1} = \mathbf{0}, \; \boldsymbol{\pi}^T \xi_S = \mathbf{0} \}.
	\end{equation*}
	To prove the opposite inclusion, we observe that the space $\{ \xi_S \in \mathbb{R}^{n \times n}\,:\; \xi_S \mathbf{1} = \mathbf{0}, \; \boldsymbol{\pi}^T \xi_S = \mathbf{0} \}$ is defined by $2n-1$ independent equations. Since the whole space has size $n^2$, then the dimension is given by $n^2-(2n-1) = (n-1)^2$ which equals the size of the space of matrices with a given left- and right-eigenvector.  Therefore, the set we defined has the same dimension as the tangent space, so that they coincide.
\end{proof}

The manifold $\mathbb{S}_n^{\boldsymbol{\pi}} $, endowed with the Fisher metric \eqref{eq:fisher-metric}, 
is a Riemannian manifold.

To use any optimization strategy we need an expression for the projection operator on the tangent space
\[
\Pi_S \,:\,\mathbb{R}^{n \times n} \rightarrow \mathcal{T}_S \mathbb{S}_n^{\boldsymbol{\pi}}.
\]
For a $Z \in \mathbb{R}^{n \times n}$ and an $S \in \mathbb{S}_n^{\boldsymbol{\pi}}$, we can express the orthogonal projection by using the decomposition of any ambient vector into
\begin{equation}\label{eq:matrix-decomposition}
	Z = \Pi_S(Z) + \Pi_S^\perp(Z).
\end{equation}

\begin{lemma}\label{lem:orthogonal_complement}
	The orthogonal complement of the tangent space $\mathcal{T}_S \mathbb{S}_n^{\boldsymbol{\pi}}$ has the expression
	\[
	\mathcal{T}_S^\perp \mathbb{S}_n^{\boldsymbol{\pi}} = \{ \xi_S^\perp \in \mathbb{R}^{n\times n} \,:\, \xi_S^\perp = (\boldsymbol{\alpha} \mathbf{1}^T + \boldsymbol{\pi} \boldsymbol{\beta}^T) \odot S \},
	\]
	for some vectors $\boldsymbol{\alpha},\boldsymbol{\beta} \in \mathbb{R}^{n}$.
\end{lemma}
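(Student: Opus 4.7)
The plan is to prove the equality by showing two inclusions: first that any matrix of the stated form lies in the orthogonal complement, and second that the stated set exhausts the orthogonal complement, by a dimension-counting argument.

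For the forward inclusion, I would take an arbitrary $\xi_S^\perp = (\boldsymbol{\alpha}\mathbf{1}^T + \boldsymbol{\pi}\boldsymbol{\beta}^T)\odot S$ and an arbitrary $\eta_S \in \mathcal{T}_S\mathbb{S}_n^{\boldsymbol{\pi}}$, and compute the Fisher inner product \eqref{eq:fisher-metric}. The key observation is that the Hadamard division cancels the factor $S$: $\xi_S^\perp \oslash S = \boldsymbol{\alpha}\mathbf{1}^T + \boldsymbol{\pi}\boldsymbol{\beta}^T$. Then
\[
\langle \xi_S^\perp,\eta_S\rangle_S = \operatorname{Trace}\bigl((\boldsymbol{\alpha}\mathbf{1}^T + \boldsymbol{\pi}\boldsymbol{\beta}^T)\eta_S^T\bigr) = \boldsymbol{\alpha}^T(\eta_S \mathbf{1}) + (\boldsymbol{\pi}^T\eta_S)\boldsymbol{\beta},
\]
which vanishes by the two defining conditions $\eta_S\mathbf{1}=\mathbf{0}$ and $\boldsymbol{\pi}^T\eta_S=\mathbf{0}$ of \eqref{eq:tangent_space}. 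Hence every such $\xi_S^\perp$ is orthogonal to $\mathcal{T}_S\mathbb{S}_n^{\boldsymbol{\pi}}$.

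For the reverse inclusion, I would use a dimension count. Since $\mathbb{S}_n^{\boldsymbol{\pi}}$ is embedded in $\mathbb{R}^{n\times n}$ of dimension $n^2$ with tangent space of dimension $(n-1)^2$ by Lemma~\ref{lem:tangent-space}, the orthogonal complement has dimension $n^2-(n-1)^2 = 2n-1$. On the other hand, the linear map
\[
\Phi \colon \mathbb{R}^n \times \mathbb{R}^n \longrightarrow \mathbb{R}^{n\times n}, \qquad (\boldsymbol{\alpha},\boldsymbol{\beta}) \longmapsto (\boldsymbol{\alpha}\mathbf{1}^T + \boldsymbol{\pi}\boldsymbol{\beta}^T)\odot S,
\]
has image contained in $\mathcal{T}_S^\perp\mathbb{S}_n^{\boldsymbol{\pi}}$ by the first step, and since all entries of $S$ are positive, $\Phi(\boldsymbol{\alpha},\boldsymbol{\beta})=0$ iff $\boldsymbol{\alpha}\mathbf{1}^T + \boldsymbol{\pi}\boldsymbol{\beta}^T = 0$. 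The rank-one structure forces $\boldsymbol{\alpha}=c\boldsymbol{\pi}$ and $\boldsymbol{\beta}=-c\mathbf{1}$ for some $c\in\mathbb{R}$, so $\ker \Phi$ is one-dimensional and $\dim \operatorname{Im}\Phi = 2n-1$. Matching dimensions forces $\operatorname{Im}\Phi = \mathcal{T}_S^\perp\mathbb{S}_n^{\boldsymbol{\pi}}$, concluding the proof.

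The only subtle point is identifying the kernel of $\Phi$ correctly, i.e., recognizing that $\boldsymbol{\alpha}\mathbf{1}^T = -\boldsymbol{\pi}\boldsymbol{\beta}^T$ is a rank-one equality that, given $\boldsymbol{\pi}>0$ and $\mathbf{1}\neq 0$, pins down both vectors up to one scalar. Everything else is a direct computation with the Fisher metric and the defining constraints of the tangent space.
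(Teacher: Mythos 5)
Your proposal is correct and follows essentially the same route as the paper: a direct Fisher-metric computation for the forward inclusion, then a dimension count for the reverse one. The only difference is cosmetic — you obtain $\dim\operatorname{Im}\Phi = 2n-1$ by explicitly identifying the one-dimensional kernel spanned by $(\boldsymbol{\pi},-\mathbf{1})$, whereas the paper argues that $D_{\boldsymbol{\pi}}^{-1}z$ is determined by its first row and column; your kernel computation is, if anything, the cleaner way to pin down the dimension.
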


\begin{proof}
	Let $z=(\boldsymbol{\alpha} \mathbf{1}^T + \boldsymbol{\pi} \boldsymbol{\beta}^T) \odot S $, for some vectors $\boldsymbol{\alpha},\boldsymbol{\beta} \in \mathbb{R}^{n}$. Then
	\begin{equation*}
	\begin{split}
		\langle  z , \xi_S  \rangle_S = & \operatorname{Trace}( (z \oslash S) \xi_S^T ) = \operatorname{Trace}( (\boldsymbol{\alpha} \mathbf{1}^T + \boldsymbol{\pi} \boldsymbol{\beta}^T) \xi_S^T ) \\
		= & \operatorname{Trace}(\boldsymbol{\alpha} \mathbf{1}^T \xi_S^T ) + \operatorname{Trace}( \boldsymbol{\pi} \boldsymbol{\beta}^T \xi_S^T) \\
		= & \boldsymbol{\alpha}^T \underbrace{\xi_S \mathbf{1}}_{= \mathbf{0}} + \boldsymbol{\beta}^T \underbrace{\xi_S^T \boldsymbol{\pi}}_{\substack{= (\boldsymbol{\pi}^T\xi_S)^T = \mathbf{0}}} = 0,
	\end{split}
	\end{equation*}
	since for a $\xi_S \in \mathcal{T}_S \mathbb{S}_n^{\boldsymbol{\pi}}$ we have $\xi_S \mathbf{1} = \mathbf{0}$ and $\boldsymbol{\pi}^T \xi_S = \mathbf{0}^T$. Therefore, we have $\langle  z , \xi_S  \rangle_S = 0$, $\forall\,\xi_S \in \mathcal{T}_S \mathbb{S}_n^{\boldsymbol{\pi}}$, i.e., $z\in \mathcal{T}_S^\perp \mathbb{S}_n^{\boldsymbol{\pi}}$. To prove the opposite inclusion we use a dimensionality argument. Let us introduce the non-singular diagonal matrix  $D_{\boldsymbol{\pi}} = \operatorname{diag}(\boldsymbol{\pi})$ and observe that
	\[
	D_{\boldsymbol{\pi}}^{-1} z = D_{\boldsymbol{\pi}}^{-1} (\boldsymbol{\alpha} \mathbf{1}^T + \boldsymbol{\pi} \boldsymbol{\beta}^T) \odot S = (\hat{\boldsymbol{\alpha}} \mathbf{1}^T + \mathbf{1} \boldsymbol{\beta}^T) \odot S, \quad \hat{\boldsymbol{\alpha}} = D_{\boldsymbol{\pi}}^{-1}\boldsymbol{\alpha}.
	\]
	Then, the quantity on the right depends only on the first row and column of $(\hat{\boldsymbol{\alpha}} \mathbf{1}^T + \mathbf{1} \boldsymbol{\beta}^T)$, since 
	\[
	(D_{\boldsymbol{\pi}}^{-1} z)_{i,j} = \left( \frac{(D_{\boldsymbol{\pi}}^{-1} z)_{i,1}}{S_{i,1}} + \frac{(D_{\boldsymbol{\pi}}^{-1} z)_{1,j}}{S_{1,j}} - \frac{(D_{\boldsymbol{\pi}}^{-1} z)_{1,1}}{S_{1,1}}  \right) \odot S_{i,j}.
	\]
	Therefore, the dimension of the orthogonal complement of the tangent space has the correct dimension $2n-1$. 
\end{proof}

The above result allows us to give an expression for the orthogonal projection { with respect to the scalar product induced by Fisher's metric~\eqref{eq:fisher-metric}}:

\begin{proposition}
	The orthogonal projection $\Pi_S\,:\,\mathbb{R}^{n\times n} \rightarrow \mathcal{T}_S \mathbb{S}_n^{\boldsymbol{\pi}}$ of a matrix $Z${--with respect to the scalar product induced by Fisher's metric~\eqref{eq:fisher-metric}--}has the following expression:
	\[
	\Pi_S(Z) = Z - (\boldsymbol{\alpha} \mathbf{1}^T + \boldsymbol{\pi}\boldsymbol{\beta}^T)\odot S,
	\]
	where the vectors $\boldsymbol{\alpha}$ and $\boldsymbol{\beta}$ are a solution to the following consistent linear system
	\begin{equation}\label{eq:alpha_and_beta_values}
		\begin{bmatrix}
			Z\mathbf{1} \\
			Z^T \boldsymbol{\pi}
		\end{bmatrix}=
		\begin{bmatrix}
			I & D_{\boldsymbol{\pi}} S \\
			S^T D_{\boldsymbol{\pi}} & \mathrm{diag}(S^T D_{\boldsymbol{\pi}} \boldsymbol{\pi}) 
		\end{bmatrix} 
		\begin{bmatrix}
			\boldsymbol{\alpha}
			\\
			\boldsymbol{\beta}
		\end{bmatrix}, \quad D_{\boldsymbol{\pi}} = \operatorname{diag}(\boldsymbol{\pi}).
	\end{equation}
\end{proposition}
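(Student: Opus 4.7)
The plan is to start from the decomposition $Z = \Pi_S(Z) + \Pi_S^\perp(Z)$ given in~\eqref{eq:matrix-decomposition}, substitute the explicit parametrization of the orthogonal complement provided by Lemma~\ref{lem:orthogonal_complement}, and rewrite the requirement $\Pi_S(Z) \in \mathcal{T}_S \mathbb{S}_n^{\boldsymbol{\pi}}$ from Lemma~\ref{lem:tangent-space} as a linear system in the unknowns $(\boldsymbol{\alpha}, \boldsymbol{\beta})$. Concretely, I would set $\Pi_S(Z) = Z - (\boldsymbol{\alpha}\mathbf{1}^T + \boldsymbol{\pi}\boldsymbol{\beta}^T)\odot S$ and impose the two tangency conditions $\Pi_S(Z)\mathbf{1} = \mathbf{0}$ and $\boldsymbol{\pi}^T \Pi_S(Z) = \mathbf{0}^T$.

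The core computation is then to evaluate the right-multiplication by $\mathbf{1}$ and the left-multiplication by $\boldsymbol{\pi}^T$ of the Hadamard product $(\boldsymbol{\alpha}\mathbf{1}^T + \boldsymbol{\pi}\boldsymbol{\beta}^T)\odot S$. Using the entrywise identity $((\mathbf{u}\mathbf{v}^T) \odot S)\mathbf{w} = D_{\mathbf{u}} S D_{\mathbf{v}} \mathbf{w}$ (a direct index expansion) and exploiting $S\mathbf{1} = \mathbf{1}$, the first condition collapses to $\boldsymbol{\alpha} + D_{\boldsymbol{\pi}} S \boldsymbol{\beta} = Z\mathbf{1}$. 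The second condition, written in transposed form, yields $S^T D_{\boldsymbol{\pi}} \boldsymbol{\alpha} + \operatorname{diag}(S^T D_{\boldsymbol{\pi}} \boldsymbol{\pi})\boldsymbol{\beta} = Z^T \boldsymbol{\pi}$, where the diagonal matrix appears because $\beta_j$ multiplies $\sum_i \pi_i^2 S_{ij} = (S^T D_{\boldsymbol{\pi}} \boldsymbol{\pi})_j$ when $\boldsymbol{\pi}^T$ hits the $\boldsymbol{\pi}\boldsymbol{\beta}^T$ block. Stacking the two blocks reproduces exactly the system~\eqref{eq:alpha_and_beta_values}.

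For the consistency claim, the issue is that the parametrization $(\boldsymbol{\alpha}, \boldsymbol{\beta}) \mapsto (\boldsymbol{\alpha}\mathbf{1}^T + \boldsymbol{\pi}\boldsymbol{\beta}^T) \odot S$ is not injective: the pair $(\boldsymbol{\alpha}, \boldsymbol{\beta}) = (-\boldsymbol{\pi}, \mathbf{1})$ lies in its kernel since $(-\boldsymbol{\pi})\mathbf{1}^T + \boldsymbol{\pi}\mathbf{1}^T = 0$, in agreement with $\dim \mathcal{T}_S^\perp \mathbb{S}_n^{\boldsymbol{\pi}} = 2n - 1$ while $(\boldsymbol{\alpha},\boldsymbol{\beta})$ carries $2n$ parameters. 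I would then note that the coefficient matrix in~\eqref{eq:alpha_and_beta_values} is symmetric ($D_{\boldsymbol{\pi}} S$ and $S^T D_{\boldsymbol{\pi}}$ are mutual transposes, the diagonal blocks are trivially symmetric), and verify by direct substitution that $(-\boldsymbol{\pi}, \mathbf{1})^T$ annihilates it, using $S\mathbf{1} = \mathbf{1}$ in the second block row. By the Fredholm alternative, the system is solvable if and only if the right-hand side is orthogonal to this kernel, i.e., $-\boldsymbol{\pi}^T Z\mathbf{1} + \mathbf{1}^T Z^T \boldsymbol{\pi} = 0$, which is an identity.

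The main obstacle I anticipate is purely mechanical, namely the bookkeeping of the Hadamard products when $\boldsymbol{\pi}^T$ acts on the block $\boldsymbol{\pi}\boldsymbol{\beta}^T \odot S$: the resulting diagonal matrix $\operatorname{diag}(S^T D_{\boldsymbol{\pi}} \boldsymbol{\pi})$ must be tracked carefully because it comes from collapsing a sum over $i$ carrying two copies of $\pi_i$. Once this entrywise step is secured, both the expression for the projection and the consistency of the linear system follow from elementary linear algebra.
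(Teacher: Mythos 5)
Your proposal is correct and follows essentially the same route as the paper: impose the tangency conditions on $Z - (\boldsymbol{\alpha}\mathbf{1}^T+\boldsymbol{\pi}\boldsymbol{\beta}^T)\odot S$, collapse the Hadamard products using $S\mathbf{1}=\mathbf{1}$ to obtain the two block equations, and establish consistency by exhibiting $[-\boldsymbol{\pi}^T,\mathbf{1}^T]^T$ as a kernel vector of the symmetric coefficient matrix orthogonal to the right-hand side. Your explicit appeal to the Fredholm alternative is just a slightly more formal phrasing of the paper's closing computation.
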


\begin{proof}
	The formula for the orthogonal projection follows from Lemma~\ref{lem:orthogonal_complement}. To find an expression for the vectors $\boldsymbol{\alpha}$ and $\boldsymbol{\beta}$, we 
	use~\eqref{eq:matrix-decomposition} and obtain
	\[
	Z\mathbf{1} = \Pi_S(Z)\mathbf{1} + \Pi_S^\perp(Z)\mathbf{1} = \Pi_S^\perp(Z)\mathbf{1}.
	\]
	From Lemma~\ref{lem:orthogonal_complement}, we find that
	\[
	Z \mathbf{1} = ( (\boldsymbol{\alpha}\mathbf{1}^T + \boldsymbol{\pi}\boldsymbol{\beta}^T) \odot S ) \mathbf{1},
	\]
	that is
	\begin{equation*}
		\sum_{j=1}^{n} Z_{i,j} =  \sum_{j=1}^{n} (\alpha_i + \pi_i \beta_j) S_{i,j}= \alpha_i \sum_{j=1}^{n} S_{i,j} + \pi_i \sum_{j=1}^{n} S_{i,j}\beta_j, 
	\end{equation*}
	i.e., in matrix form,
	\[
	Z \mathbf{1} =  \boldsymbol{\alpha} +  D_{\boldsymbol{\pi}} S \boldsymbol{\beta}, \qquad D_{\boldsymbol{\pi}} = \operatorname{diag}(\boldsymbol{\pi}).
	\]
	Similarly, by transposing \eqref{eq:matrix-decomposition}, we obtain
	\begin{equation*}
		\begin{split}
			Z^T \boldsymbol{\pi} = &\; ( (\boldsymbol{\alpha}\mathbf{1}^T + \boldsymbol{\pi}\boldsymbol{\beta}^T) \odot S )^T \boldsymbol{\pi}\\
			=&\;   ( (\mathbf{1}
			\boldsymbol{\alpha}^T) \odot S^T)\boldsymbol{\pi}
			+(\boldsymbol{\beta} \boldsymbol{\pi} ^T) \odot S^T) \boldsymbol{\pi}.  
		\end{split}
	\end{equation*}
	From the properties of the Hadamard product, we find that
	\[
	( (\mathbf{1}
	\boldsymbol{\alpha}^T) \odot S^T)\boldsymbol{\pi}=D_{\mathbf{1}} S^T D_{\boldsymbol{\alpha}} \boldsymbol{\pi}=
	S^T D_{\boldsymbol{\pi}} \boldsymbol{\alpha},
	\]
	and
	\[
	((\boldsymbol{\beta} \boldsymbol{\pi} ^T) \odot S^T) \boldsymbol{\pi}=
	\mathrm{diag}(\boldsymbol{\beta} \boldsymbol{\pi}^T D_{\boldsymbol{\pi}}S)= \mathrm{diag}( S^TD_{\boldsymbol{\pi}}\boldsymbol{\pi}) \boldsymbol{\beta}.
	\]
	Therefore, we conclude that
	\[
	Z^T \boldsymbol{\pi} = 
	S^T D_{\boldsymbol{\pi}} \boldsymbol{\alpha} +
	\mathrm{diag}( S^TD_{\boldsymbol{\pi}}\boldsymbol{\pi}) \boldsymbol{\beta},
	\]
	so that the vectors $\boldsymbol{\alpha}$ and $\boldsymbol{\beta}$ can be found as a solution of the linear system
	\[
	\begin{bmatrix}
		Z\mathbf{1} \\
		Z^T \boldsymbol{\pi}
	\end{bmatrix}=
	\begin{bmatrix}
		I & D_{\boldsymbol{\pi}} S \\
		S^T D_{\boldsymbol{\pi}} & \mathrm{diag}( S^TD_{\boldsymbol{\pi}}\boldsymbol{\pi}) 
	\end{bmatrix} 
	\begin{bmatrix}
		\boldsymbol{\alpha}
		\\
		\boldsymbol{\beta}
	\end{bmatrix}.
	\]
	The linear system is consistent with an affine space of solutions of dimension one, since
	\[
	\begin{bmatrix}
		Z\mathbf{1} \\
		Z^T \boldsymbol{\pi}
	\end{bmatrix}^T \begin{bmatrix}
		- \boldsymbol{\pi} \\
		\mathbf{1}
	\end{bmatrix} = - \mathbf{1}^T Z^T \boldsymbol{\pi} + \boldsymbol{\pi}^T Z \mathbf{1} = - \mathbf{1}^T \boldsymbol{\pi} + \boldsymbol{\pi}^T \mathbf{1} = -1 + 1 = 0,
	\]
	and 
	\[
	\begin{bmatrix}
		I & D_{\boldsymbol{\pi}} S \\
		S^T D_{\boldsymbol{\pi}} & \mathrm{diag}( S^TD_{\boldsymbol{\pi}}\boldsymbol{\pi}) 
	\end{bmatrix} \begin{bmatrix}
		- \boldsymbol{\pi} \\
		\mathbf{1}
	\end{bmatrix} = \begin{bmatrix}
		\mathbf{0} \\
		\mathbf{0}
	\end{bmatrix}.
	\]
\end{proof}

Let now $f : \mathbb{S}_n^{\boldsymbol{\pi}} \rightarrow \mathbb{R}$ be a smooth real function defined on the manifold, and denote by $\operatorname{Grad} f(S)$ its euclidean gradient with respect to the euclidean metric. Then we can express the Riemannian gradient as follows.
\begin{proposition}[Riemannian gradient]\label{pro:riemannian-gradient}
	The Riemannian gradient $\operatorname{grad}f(S)$ is expressed in terms of the Euclidean gradient $\operatorname{Grad}f(S)$ as:
	\begin{equation}\label{eq:riemannian-gradient-on-the-new-manifold}
		\operatorname{grad}f(S) = \Pi_S(\operatorname{Grad}f(S) \odot S).
	\end{equation}
\end{proposition}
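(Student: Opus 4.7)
The plan is to verify the two defining properties of the Riemannian gradient stated in Definition~\ref{def:riemannian-gradient-and-hessian}: that $\operatorname{grad}f(S)$ lies in $\mathcal{T}_S\mathbb{S}_n^{\boldsymbol{\pi}}$, and that $\langle \operatorname{grad}f(S),\xi_S\rangle_S = \mathrm{D}f(S)[\xi_S]$ for every $\xi_S\in\mathcal{T}_S\mathbb{S}_n^{\boldsymbol{\pi}}$. Membership is automatic because the right-hand side of \eqref{eq:riemannian-gradient-on-the-new-manifold} is, by construction, in the image of the orthogonal projector $\Pi_S$ onto the tangent space. So the real content is the inner-product identity.

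For that identity, first I would restate the directional derivative in terms of the Frobenius pairing via the Euclidean gradient,
\[
\mathrm{D}f(S)[\xi_S] = \operatorname{Trace}\bigl(\operatorname{Grad}f(S)\,\xi_S^T\bigr), \quad \forall\,\xi_S \in \mathbb{R}^{n\times n}.
\]
The key observation is then that a Hadamard multiplication by $S$ converts the Frobenius pairing into the Fisher pairing~\eqref{eq:fisher-metric}: for every $Z \in \mathbb{R}^{n\times n}$ and every $\eta \in \mathbb{R}^{n\times n}$, entrywise positivity of $S$ gives
\[
\langle Z \odot S,\eta\rangle_S = \operatorname{Trace}\bigl(((Z\odot S)\oslash S)\,\eta^T\bigr) = \operatorname{Trace}\bigl(Z\,\eta^T\bigr).
\]
Applying this with $Z = \operatorname{Grad}f(S)$ and $\eta = \xi_S$ produces $\langle \operatorname{Grad}f(S) \odot S,\xi_S\rangle_S = \mathrm{D}f(S)[\xi_S]$.

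It remains to show that applying $\Pi_S$ to $\operatorname{Grad}f(S)\odot S$ does not change the pairing with a tangent vector. This is precisely the defining property of an orthogonal projector with respect to the Fisher metric: using the decomposition \eqref{eq:matrix-decomposition} and Lemma~\ref{lem:orthogonal_complement}, for every $\xi_S\in\mathcal{T}_S\mathbb{S}_n^{\boldsymbol{\pi}}$ one has $\langle \Pi_S^\perp(Z\odot S),\xi_S\rangle_S=0$, hence
\[
\langle \Pi_S(\operatorname{Grad}f(S) \odot S),\xi_S\rangle_S = \langle \operatorname{Grad}f(S) \odot S,\xi_S\rangle_S = \mathrm{D}f(S)[\xi_S].
\]
Uniqueness of the vector in $\mathcal{T}_S\mathbb{S}_n^{\boldsymbol{\pi}}$ that represents the differential through the Fisher metric then identifies it with $\operatorname{grad}f(S)$.

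There is no serious obstacle here; the only subtle point worth emphasizing is conceptual rather than technical: the factor $1/S_{i,j}$ appearing in the Fisher metric is exactly cancelled by the Hadamard pre-multiplication by $S$, which is what forces the ``weighted correction'' $\operatorname{Grad}f(S)\odot S$ to appear \emph{before} the projection step, rather than the naive $\Pi_S(\operatorname{Grad}f(S))$ that one would write in the Euclidean setting.
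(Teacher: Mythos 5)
Your argument is correct and follows essentially the same route as the paper's proof: you convert the Frobenius pairing defining the Euclidean gradient into the Fisher pairing via the Hadamard multiplication by $S$, then use the decomposition \eqref{eq:matrix-decomposition} and Lemma~\ref{lem:orthogonal_complement} to discard the orthogonal-complement part, and conclude by uniqueness of the Riemannian gradient. The only difference is cosmetic — you make explicit the (automatic) tangency of $\Pi_S(\operatorname{Grad}f(S)\odot S)$ up front, which the paper leaves implicit.
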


\begin{proof}
	We prove \eqref{eq:riemannian-gradient-on-the-new-manifold} by directly applying the Definition~\ref{def:riemannian-gradient-and-hessian} of {the} Riemannian gradient. Indeed, the Riemannian gradient is the \emph{unique} element of $\mathcal{T}_S \mathbb{S}_n^{\boldsymbol{\pi}}$ for which  the directional derivative and the Riemannian metric satisfy the equation
	\begin{align}
		\langle \operatorname{grad}f(S), \xi_S \rangle_S = \mathrm{D} f(S) [\xi_S],\ \forall \ \xi_S \in \mathcal{T}_S \mathbb{S}_n^{\boldsymbol{\pi}}.
		\label{eq:riemannian-gradient-definition}
	\end{align}
	{T}herefore if we find an element of $\mathcal{T}_S \mathbb{S}_n^{\boldsymbol{\pi}}$ for which~\eqref{eq:riemannian-gradient-definition} holds for all tangent vectors, then this is the Riemannian gradient. We start by writing the Euclidean gradient in terms of the directional derivative and the Euclidean scalar product as:
	\[\langle \operatorname{Grad}f(S), \xi \rangle = \mathrm{D} f(S) [\xi],\ \forall \ \xi \in \mathbb{R}^{n \times n}.\]
	Restricting the previous to $\mathcal{T}_S \mathbb{S}_n^{\boldsymbol{\pi}} \subset \mathbb{R}^{n \times n}$, and changing the inner product to the one induced by the Fisher metric~\eqref{eq:fisher-metric} we find
	\[
	\langle \operatorname{Grad}f(S) , \xi_S \rangle = \langle \operatorname{Grad}f(S) \odot S, \xi_S \rangle_S 
	= \mathrm{D} f(S) [\xi_S],\ \forall \ \xi_S \in \mathcal{T}_S \mathbb{S}_n^{\boldsymbol{\pi}}. 
	\]
	To reach the conclusion we now need to apply again Lemma~\ref{lem:orthogonal_complement} and~\eqref{eq:matrix-decomposition} to project the (scaled) Euclidean gradient
	\[ \operatorname{Grad}f(S) \odot S = \Pi_S(\operatorname{Grad}f(S) \odot S) + \Pi_S^\perp(\operatorname{Grad}f(S) \odot S).\]
	From this we find
	\[
	\langle \operatorname{Grad}f(S) \odot S, \xi_S \rangle_S = \langle \Pi_S(\operatorname{Grad}f(S) \odot S), \xi_S \rangle_S,
	\]
	having canceled out the second term $\langle \Pi^\perp_A(\operatorname{Grad}f(S) \odot S), \xi_S \rangle_S = 0$ by means of the definition of the orthogonal complement.
	Summarizing, we have thus shown that $\Pi_S(\operatorname{Grad}f(S) \odot S)$ is \emph{a} tangent vector that satisfies the condition~\eqref{eq:riemannian-gradient-definition}, that permits us to conclude by the uniqueness of the Riemannian gradient that:
	\begin{align*}
		\operatorname{grad} f(S) = \Pi_S \left( \operatorname{Grad} f(S) \odot S \right).
	\end{align*}
\end{proof}

To implement Algorithm~\ref{alg:newton} we also need an expression for the Riemannian Hessian. From Definition~\ref{def:riemannian-gradient-and-hessian}, the Riemannian Hessian is related to the Levi-Civita connection, thus we first need a way of expressing the Levi-Civita connection for the metric~\eqref{eq:fisher-metric}.

\begin{proposition}[Koszul formula, {\cite[Theorem~5.3.1]{AbsilBook}}]\label{pro:Koszul}
	The Levi-Civita connection (Definition~\ref{def:affine-connection}) on the Euclidean space $\mathbb{R}^{n \times n}$ endowed with the Fisher information metric~\eqref{eq:fisher-metric} is given by
	\begin{align*}
		\nabla_{\eta_S} \xi_S = \mathrm{D}(\xi_S)[\eta_S] - \cfrac{1}{2} (\eta_S \odot \xi_S) \oslash S
	\end{align*}
\end{proposition}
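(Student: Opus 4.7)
The plan is to apply the fundamental uniqueness theorem for the Levi-Civita connection: an affine connection on a Riemannian manifold that is both torsion-free and compatible with the metric is unique. Thus it suffices to verify that the explicit formula
$$\nabla_{\eta_S}\xi_S \;=\; \mathrm{D}(\xi_S)[\eta_S] \;-\; \tfrac{1}{2}(\eta_S\odot\xi_S)\oslash S$$
defines an affine connection satisfying both symmetry and compatibility with the Fisher metric \eqref{eq:fisher-metric}. Since $\mathbb{R}^{n\times n}$ is an open submanifold of $\mathcal{E}=\mathbb{R}^{n\times n}$, the tangent space at every point is the entire ambient space, so pointwise operations like Hadamard multiplication and division by $S>0$ are well defined.

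First I would verify the three axioms of Definition~\ref{def:affine-connection}: the directional derivative $\mathrm{D}(\xi_S)[\eta_S]$ is $\mathbb{R}$-linear in $\xi_S$ and $\mathfrak{F}_x(\mathcal{M})$-linear in $\eta_S$, while the correction term $(\eta_S\odot\xi_S)\oslash S$ is bilinear in $(\eta_S,\xi_S)$ and contains no derivatives. The Leibniz rule in the second slot is then an immediate consequence of the Leibniz rule for $\mathrm{D}$, since the correction is purely algebraic. Next, torsion-freeness is the easy check: the commutativity of the Hadamard product forces
$$\nabla_{\eta_S}\xi_S - \nabla_{\xi_S}\eta_S \;=\; \mathrm{D}(\xi_S)[\eta_S] - \mathrm{D}(\eta_S)[\xi_S] \;=\; [\eta_S,\xi_S],$$
as the two copies of $\tfrac{1}{2}(\eta_S\odot\xi_S)\oslash S$ cancel.

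The main step, and the one where the coefficient $\tfrac{1}{2}$ is forced, is verifying metric compatibility with \eqref{eq:fisher-metric}. Writing the metric entrywise, $\langle \eta_S,\xi_S\rangle_S=\sum_{i,j}\frac{(\eta_S)_{i,j}(\xi_S)_{i,j}}{S_{i,j}}$, the directional derivative along $\chi_S$ produces three contributions by the product/quotient rule:
$$\chi_S\langle\eta_S,\xi_S\rangle_S \;=\; \sum_{i,j}\frac{\mathrm{D}(\eta_S)[\chi_S]_{i,j}(\xi_S)_{i,j}}{S_{i,j}} + \sum_{i,j}\frac{(\eta_S)_{i,j}\mathrm{D}(\xi_S)[\chi_S]_{i,j}}{S_{i,j}} - \sum_{i,j}\frac{(\chi_S)_{i,j}(\eta_S)_{i,j}(\xi_S)_{i,j}}{S_{i,j}^2}.$$
The negative third term, coming from differentiating the denominator, is precisely what a naive connection (the Euclidean directional derivative alone) fails to account for. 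Expanding $\langle\nabla_{\chi_S}\eta_S,\xi_S\rangle_S+\langle\eta_S,\nabla_{\chi_S}\xi_S\rangle_S$ using the proposed formula yields the two Euclidean derivative terms plus two copies of $-\tfrac{1}{2}\sum_{i,j}\frac{(\chi_S)_{i,j}(\eta_S)_{i,j}(\xi_S)_{i,j}}{S_{i,j}^2}$, which sum to exactly the offending third term.

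The main obstacle is essentially this bookkeeping: ensuring that the weight $\tfrac{1}{2}$ and the particular combination $\eta\odot\xi$ (symmetric in its two arguments, so that both connection evaluations contribute the \emph{same} quadratic term) produce the single $1/S^2$ factor on the left. Once symmetry and compatibility are verified, uniqueness of the Levi-Civita connection closes the argument without further computation.
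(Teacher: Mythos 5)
Your verification is correct. The torsion check is right (the symmetric correction term cancels, leaving the Lie bracket $\mathrm{D}(\xi_S)[\eta_S]-\mathrm{D}(\eta_S)[\xi_S]$), and the metric-compatibility computation correctly identifies the extra term $-\sum_{i,j}(\chi_S)_{i,j}(\eta_S)_{i,j}(\xi_S)_{i,j}/S_{i,j}^2$ produced by differentiating the denominator of the Fisher metric, which the two copies of $-\tfrac12(\cdot\odot\cdot)\oslash S$ reproduce exactly; uniqueness of the Levi-Civita connection then closes the argument. Note, however, that the paper does not prove this proposition at all: it is stated as a cited result (\cite[Theorem~5.3.1]{AbsilBook} together with the derivation for the Fisher metric in \cite{Douik8861409}), so there is no in-paper proof to compare against. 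The route suggested by the proposition's title is the \emph{constructive} one: plug the metric into the Koszul identity
$2\langle\nabla_{\eta}\xi,\chi\rangle=\eta\langle\xi,\chi\rangle+\xi\langle\eta,\chi\rangle-\chi\langle\eta,\xi\rangle+\langle[\eta,\xi],\chi\rangle-\langle[\eta,\chi],\xi\rangle-\langle[\xi,\chi],\eta\rangle$
and solve for the correction term, which \emph{derives} the coefficient $\tfrac12$ rather than checking it. Your verify-then-invoke-uniqueness argument is logically equivalent and arguably cleaner when the answer is already in hand. One small imprecision worth fixing: the Fisher metric and the division $\oslash S$ only make sense on the open set of entrywise positive matrices, not on all of $\mathbb{R}^{n\times n}$ as you (and, loosely, the proposition statement) phrase it; restricting to that open submanifold is what makes every step of your computation legitimate.
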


\begin{theorem}[Riemannian Hessian]
	The Riemannian Hessian $\mathrm{hess} f(S)[\xi_S]$ can be obtained from the Euclidean gradient $\operatorname{Grad} f(S)$ and the Euclidean Hessian $\operatorname{Hess} f(S)$ by using the identity
	\[
	\mathrm{hess} f(S)[\xi_S] = \Pi_S(\mathrm{D}(\mathrm{grad} f(S))[\xi_S])-\frac12 \Pi_S(( \Pi_S(\mathrm{Grad} f(S))\odot S) \odot \xi_S) \oslash S),
	\]
	where
	\[
	D(\mathrm{grad} f(S))[\xi_S] = \dot{\gamma}[\xi_S]- (\dot{\boldsymbol{\alpha}}[\xi_S] \mathbf{1}^T+\boldsymbol{\pi}\dot{\boldsymbol{\beta}}^T[\xi_S])\odot S -
	(\boldsymbol{\alpha} \mathbf{1}^T+\boldsymbol{\pi}\boldsymbol{\beta}^T)\odot \xi_S,
	\]
	and
	\begin{equation*}
	\begin{split}
		\gamma = & \;  \mathrm{Grad} f(S)\odot S,\\
		\dot{\gamma}[\xi_S] = & \; \mathrm{Hess}\; f(S)[\xi_S]\odot S + \mathrm{Grad}\; f(S)\odot \xi_S,\\
		\mathcal{A} = &\;     \begin{bmatrix}
			I & D_{\boldsymbol{\pi}} S \\
			S^T D_{\boldsymbol{\pi}} & \mathrm{diag}( S^TD_{\boldsymbol{\pi}}\boldsymbol{\pi})
		\end{bmatrix}, \\
		\boldsymbol{\alpha}, \boldsymbol{\beta} \,\text{s.t.}\,&\; \mathcal{A} \begin{bmatrix}
			\boldsymbol{\alpha} \\ \boldsymbol{\beta}
		\end{bmatrix} = \begin{bmatrix}
			\gamma \mathbf{1} \\ \gamma^T \boldsymbol{\pi}
		\end{bmatrix},\\
		\dot{\boldsymbol{\alpha}}[\xi_S], \dot{\boldsymbol{\beta}}[\xi_S] \,\text{s.t.}\,&\; \mathcal{A} \begin{bmatrix}
			\dot{\boldsymbol{\alpha}}[\xi_S] \\ \dot{\boldsymbol{\beta}}[\xi_S]
		\end{bmatrix} = \begin{bmatrix}
			\dot{\gamma}[\xi_S] \mathbf{1} \\ \dot{\gamma}^T[\xi_S] \boldsymbol{\pi}
		\end{bmatrix} -
		\begin{bmatrix}
			0 & D_{\boldsymbol{\pi}} \xi_S \\
			\xi_S^T D_{\boldsymbol{\pi}} & \mathrm{diag}(\xi_ S^TD_{\boldsymbol{\pi}}\boldsymbol{\pi})
		\end{bmatrix} 
		\begin{bmatrix}
			\boldsymbol{\alpha} \\ \boldsymbol{\beta}
		\end{bmatrix}.
	\end{split}
\end{equation*}
\end{theorem}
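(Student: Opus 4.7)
The plan is to apply Definition~\ref{def:riemannian-gradient-and-hessian} directly, namely $\operatorname{hess}f(S)[\xi_S] = \nabla_{\xi_S}\operatorname{grad}f(S)$, where $\nabla$ is the Levi-Civita connection on the embedded submanifold $\mathbb{S}_n^{\boldsymbol{\pi}}$ equipped with the Fisher metric. A standard fact for embedded Riemannian submanifolds is that the intrinsic Levi-Civita connection is obtained by projecting the ambient one orthogonally onto the tangent space. Combined with Proposition~\ref{pro:Koszul}, this yields
\begin{equation*}
\operatorname{hess}f(S)[\xi_S] = \Pi_S\!\Bigl(\mathrm{D}(\operatorname{grad}f(S))[\xi_S] - \tfrac{1}{2}(\xi_S \odot \operatorname{grad}f(S))\oslash S\Bigr).
\end{equation*}
Using the $\mathbb{R}$-linearity of $\Pi_S$ (at fixed $S$) and substituting $\operatorname{grad}f(S) = \Pi_S(\operatorname{Grad}f(S)\odot S)$ from Proposition~\ref{pro:riemannian-gradient} in the correction term then produces the two summands displayed in the claim.

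Next, I would compute $\mathrm{D}(\operatorname{grad}f(S))[\xi_S]$ by differentiating the explicit formula for $\operatorname{grad}f(S)$. Setting $\gamma = \operatorname{Grad}f(S)\odot S$, Proposition~\ref{pro:riemannian-gradient} and the decomposition \eqref{eq:matrix-decomposition} give $\operatorname{grad}f(S) = \gamma - (\boldsymbol{\alpha}\mathbf{1}^T+\boldsymbol{\pi}\boldsymbol{\beta}^T)\odot S$, where $(\boldsymbol{\alpha},\boldsymbol{\beta})$ solves \eqref{eq:alpha_and_beta_values} with right-hand side $(\gamma\mathbf{1},\gamma^T\boldsymbol{\pi})$. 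A straightforward application of the Leibniz rule yields the stated expression for $\mathrm{D}(\operatorname{grad}f(S))[\xi_S]$, while $\dot{\gamma}[\xi_S]$ follows from the product rule applied to $\gamma = \operatorname{Grad}f(S)\odot S$, using the definition $\operatorname{Hess}f(S)[\xi_S] = \mathrm{D}(\operatorname{Grad}f(S))[\xi_S]$ of the Euclidean Hessian.

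To obtain $\dot{\boldsymbol{\alpha}}[\xi_S]$ and $\dot{\boldsymbol{\beta}}[\xi_S]$, I would implicitly differentiate the linear system $\mathcal{A}(S)\,[\boldsymbol{\alpha};\boldsymbol{\beta}]^T = [\gamma\mathbf{1};\gamma^T\boldsymbol{\pi}]^T$ in the direction $\xi_S$. Since only the off-diagonal blocks $D_{\boldsymbol{\pi}}S$, $S^T D_{\boldsymbol{\pi}}$ and $\mathrm{diag}(S^T D_{\boldsymbol{\pi}}\boldsymbol{\pi})$ depend on $S$, the differentiated operator $\dot{\mathcal{A}}[\xi_S]$ is precisely the block matrix appearing on the right-hand side of the stated identity, and the resulting equation for $(\dot{\boldsymbol{\alpha}}[\xi_S], \dot{\boldsymbol{\beta}}[\xi_S])$ is exactly the one in the claim.

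The main obstacle is verifying the consistency of the differentiated linear system: Lemma~\ref{lem:orthogonal_complement} together with the computation at the end of the projection proof shows that $\mathcal{A}(S)$ has a one-dimensional kernel spanned by $(-\boldsymbol{\pi},\mathbf{1})$, so the derivative system is consistent only if its right-hand side is orthogonal to this null vector. I would check this by pairing with $(-\boldsymbol{\pi}^T,\mathbf{1}^T)$ and using $\xi_S\mathbf{1}=\mathbf{0}$, $\boldsymbol{\pi}^T\xi_S=\mathbf{0}^T$ (valid for $\xi_S \in \mathcal{T}_S\mathbb{S}_n^{\boldsymbol{\pi}}$ by Lemma~\ref{lem:tangent-space}), which causes the contribution of $\dot{\mathcal{A}}[\xi_S][\boldsymbol{\alpha};\boldsymbol{\beta}]^T$ to cancel. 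With consistency established, any solution $(\dot{\boldsymbol{\alpha}}[\xi_S], \dot{\boldsymbol{\beta}}[\xi_S])$ yields the same value of $(\dot{\boldsymbol{\alpha}}[\xi_S]\mathbf{1}^T + \boldsymbol{\pi}\dot{\boldsymbol{\beta}}[\xi_S]^T)\odot S$ because the kernel direction produces the zero matrix under this Hadamard combination. Assembling all pieces and invoking the uniqueness of the Riemannian Hessian then gives the stated formula.
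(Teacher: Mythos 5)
Your proposal follows essentially the same route as the paper's proof: the Koszul formula combined with the projection onto $\mathcal{T}_S\mathbb{S}_n^{\boldsymbol{\pi}}$, differentiation of $\operatorname{grad}f(S)=\gamma-(\boldsymbol{\alpha}\mathbf{1}^T+\boldsymbol{\pi}\boldsymbol{\beta}^T)\odot S$ via the Leibniz rule, and implicit differentiation of the linear system $\mathcal{A}[\boldsymbol{\alpha};\boldsymbol{\beta}]=[\gamma\mathbf{1};\gamma^T\boldsymbol{\pi}]$ to get $\dot{\boldsymbol{\alpha}},\dot{\boldsymbol{\beta}}$. Your additional verification that the differentiated system is consistent (pairing with the null vector $(-\boldsymbol{\pi},\mathbf{1})$ and using $\xi_S\mathbf{1}=\mathbf{0}$, $\boldsymbol{\pi}^T\xi_S=\mathbf{0}^T$) and that the answer is independent of the chosen solution is correct and is a point the paper's proof leaves implicit.
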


\begin{proof}
	To obtain the first identity it is sufficient to use the Koszul formula (Proposition~\ref{pro:Koszul})
	\begin{equation*}
		\begin{split}
			\mathrm{hess} f(S)[\xi_S] & =\Pi_S(\mathrm{D}(\mathrm{grad} f(S))[\xi_S])-\frac12 \Pi_S(( \mathrm{grad} f(S))\odot \xi_S)\oslash S)\\
			& = \Pi_S(\mathrm{D}(\mathrm{grad} f(S))[\xi_S]) \\ & \qquad -\frac12 \Pi_S(( \Pi_S(\mathrm{Grad} f(S))\odot S) \odot \xi_S) \oslash S).
		\end{split}
	\end{equation*}
	Then, we need to find an expression for $\mathrm{D}(\mathrm{grad} f(S))[\xi_S]$. To find it, we denote $\gamma= \mathrm{Grad} f(S)\odot S$;  from the properties of the Fréchet derivative we find that
	\begin{equation*}
	\begin{split}
		\mathrm{D}(\mathrm{grad} f(S))[\xi_S]& =\mathrm{D}(\Pi_S(\gamma))[\xi_S]=\mathrm{D}(\gamma-(\boldsymbol{\alpha} \mathbf{1}^T+\boldsymbol{\pi}\boldsymbol{\beta}^T)\odot S)[\xi_S]=\\
		& \mathrm{D}(\gamma)[\xi_S]-\mathrm{D}( (\boldsymbol{\alpha} \mathbf{1}^T+\boldsymbol{\pi}\boldsymbol{\beta}^T)\odot S)[\xi_S]=\\
		& \dot{\gamma}[\xi_S]- (\dot{\boldsymbol{\alpha}}[\xi_S] \mathbf{1}^T+\boldsymbol{\pi}\dot{\boldsymbol{\beta}}^T[\xi_S])\odot S -
		(\boldsymbol{\alpha} \mathbf{1}^T+\boldsymbol{\pi}\boldsymbol{\beta}^T)\odot \xi_S.
	\end{split}
	\end{equation*}
	Now we need an expression for $\dot{\gamma}[\xi_S]$, $\dot{\boldsymbol{\alpha}}[\xi_S]$, and $\dot{\boldsymbol{\beta}}[\xi_S]$. Concerning  $\dot{\gamma}[\xi_S]=\mathrm{D}(\gamma)[\xi_S]$, we have
	\begin{equation*}
		\begin{split}
			\dot{\gamma}[\xi_S]&=\mathrm{D}(\mathrm{Grad}\; f(S))[\xi_S]\odot S + \mathrm{Grad}\; f(S)\odot \xi_S\\
			&=\mathrm{Hess}\; f(S)[\xi_S]\odot S + \mathrm{Grad}\; f(S)\odot \xi_S.
		\end{split}
	\end{equation*}
	To find an expression for $\dot{\boldsymbol{\alpha}}[\xi_S]$ and $\dot{\boldsymbol{\beta}}[\xi_S]$, we compute the derivative along the direction $\xi_S$ of both sides  of the linear system
	\[
	\begin{bmatrix}
		\gamma \mathbf{1} \\ \gamma^T \boldsymbol{\pi}
	\end{bmatrix} =
	\begin{bmatrix}
		I & D_{\boldsymbol{\pi}} S \\
		S^T D_{\boldsymbol{\pi}} & \mathrm{diag}( S^TD_{\boldsymbol{\pi}}\boldsymbol{\pi})
	\end{bmatrix} 
	\begin{bmatrix}
		\boldsymbol{\alpha} \\ \boldsymbol{\beta}
	\end{bmatrix} \equiv \mathcal{A} \begin{bmatrix}
		\boldsymbol{\alpha} \\ \boldsymbol{\beta}
	\end{bmatrix}.
	\]
	Therefore, we obtain
	\[
	\begin{bmatrix}
		\dot{\gamma}[\xi_S] \mathbf{1} \\ \dot{\gamma}^T[\xi_S] \boldsymbol{\pi}
	\end{bmatrix} =
	\begin{bmatrix}
		0 & D_{\boldsymbol{\pi}} \xi_S \\
		\xi_S^T D_{\boldsymbol{\pi}} & \mathrm{diag}(\xi_ S^TD_{\boldsymbol{\pi}}\boldsymbol{\pi})
	\end{bmatrix} 
	\begin{bmatrix}
		\boldsymbol{\alpha} \\ \boldsymbol{\beta}
	\end{bmatrix}+
	\begin{bmatrix}
		I & D_{\boldsymbol{\pi}} S \\
		S^T D_{\boldsymbol{\pi}} & \mathrm{diag}( S^TD_{\boldsymbol{\pi}}\boldsymbol{\pi})
	\end{bmatrix} 
	\begin{bmatrix}
		\dot{\boldsymbol{\alpha}}[\xi_S] \\ \dot{\boldsymbol{\beta}}[\xi_S]
	\end{bmatrix}, 
	\]
	so that $\dot{\boldsymbol{\alpha}}[\xi_S]$ and $\dot{\boldsymbol{\beta}}[\xi_S]$ can be computed by solving the linear system (with the same system matrix)
	\begin{align*}
		\mathcal{A}
		\begin{bmatrix}
			\dot{\boldsymbol{\alpha}}[\xi_S] \\ \dot{\boldsymbol{\beta}}[\xi_S]
		\end{bmatrix}= \begin{bmatrix}
			\dot{\gamma}[\xi_S] \mathbf{1} \\ \dot{\gamma}^T[\xi_S] \boldsymbol{\pi}
		\end{bmatrix} -
		\begin{bmatrix}
			0 & D_{\boldsymbol{\pi}} \xi_S \\
			\xi_S^T D_{\boldsymbol{\pi}} & \mathrm{diag}(\xi_S^TD_{\boldsymbol{\pi}}\boldsymbol{\pi})
		\end{bmatrix} 
		\begin{bmatrix}
			\boldsymbol{\alpha} \\ \boldsymbol{\beta}
		\end{bmatrix} 
	\end{align*}.
\end{proof}

To complete the construction of the Riemannian optimization algorithm, we also need to define the retraction from the tangent bundle to the manifold (Definition~\ref{def:retraction}). To obtain such a map, we apply a suitable modification of the generalized Sinkhorn-Knopp algorithm~\cite{Rothblum}, which is based on the following {theorem}:

\begin{theorem}[Sinkhorn generalization, {\cite[Theorem 2(a)-(b)]{Rothblum}}]\label{thm:sink}
	Let $A \in \mathbb{R}^{n \times n}$ be a nonnegative matrix. Then for any vectors $\mathbf{r},\mathbf{c} \in \mathbb{R}^{n}$ with nonnegative entries there exist diagonal matrices $D_1$ and $D_2$ such that
	\[
	D_1 A D_2 \mathbf{1} = \mathbf{r}, \qquad D_2 A^T D_1 \mathbf{1} = \mathbf{c},
	\]
	if and only if there exists a matrix $B$ such $B\mathbf{1} = \mathbf{r}$ and $B^T\mathbf{1} = \mathbf{c}$, and having the same nonzero pattern as~$A$. Furthermore, if the matrix $A$ is positive, then $D_1$ and $D_2$ are unique up to a constant factor.
\end{theorem}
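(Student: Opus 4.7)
The plan is to treat the two directions of the biconditional separately. \textbf{Necessity} is immediate: given diagonal $D_1, D_2$ with the asserted scaling properties, I would simply set $B = D_1 A D_2$ and observe that $B \mathbf{1} = D_1 A D_2 \mathbf{1} = \mathbf{r}$ and $B^T \mathbf{1} = D_2 A^T D_1 \mathbf{1} = \mathbf{c}$, while the support of $B$ is contained in that of $A$. The possible discrepancy arising when some entry of $\mathbf{r}$ (resp. $\mathbf{c}$) vanishes is handled by noting that a zero row (resp. column) sum of $B$ forces the corresponding diagonal entry of $D_1$ (resp. $D_2$) to be zero, which delivers the coincidence of supports.

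For \textbf{sufficiency} I would adopt a convex-variational approach. Define
\[
\Phi(\mathbf{x}, \mathbf{y}) = \sum_{i,j} A_{ij}\, e^{x_i + y_j} - \mathbf{r}^T \mathbf{x} - \mathbf{c}^T \mathbf{y},
\]
a smooth convex function on $\mathbb{R}^n \times \mathbb{R}^n$. A direct computation of $\nabla \Phi = \mathbf{0}$ shows that any critical point $(\mathbf{x}^*, \mathbf{y}^*)$ produces diagonal matrices $(D_1)_{ii} = e^{x_i^*}$ and $(D_2)_{jj} = e^{y_j^*}$ that satisfy the prescribed scaling equations. The problem thus reduces to establishing the existence of a minimizer.

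The main obstacle lies precisely in this existence step: I would show that the hypothesized matrix $B$ is exactly the ingredient that makes $\Phi$ coercive modulo the trivial invariance $(\mathbf{x}, \mathbf{y}) \mapsto (\mathbf{x} + t\mathbf{1}, \mathbf{y} - t\mathbf{1})$, which leaves $\Phi$ unchanged. Concretely, any recession direction $(\mathbf{u}, \mathbf{v})$ along which $\Phi$ stays bounded must satisfy $u_i + v_j \leq 0$ for every $(i,j)$ with $A_{ij} \neq 0$ together with $\mathbf{r}^T \mathbf{u} + \mathbf{c}^T \mathbf{v} \geq 0$. Testing the identity $\sum_{i,j} B_{ij}(u_i + v_j) = \mathbf{r}^T \mathbf{u} + \mathbf{c}^T \mathbf{v}$ against these two inequalities forces $u_i + v_j = 0$ on the support of $B$, and a bipartite graph-connectivity argument on the common support of $A$ and $B$ propagates the equality to all of $\operatorname{supp}(A)$, placing $(\mathbf{u}, \mathbf{v})$ inside the trivial invariance line. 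This yields coercivity on the quotient and hence existence of the minimizer.

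For the \textbf{uniqueness} statement when $A > 0$, I would invoke strict convexity of $\Phi$ modulo the one-dimensional subspace $\mathrm{span}\{(\mathbf{1}, -\mathbf{1})\}$: a direct computation of the Hessian of $\Phi$ shows that it is positive semidefinite with null space exactly this line whenever $A$ is entrywise positive. This gives uniqueness of the minimizer up to the shift $(\mathbf{x}, \mathbf{y}) \mapsto (\mathbf{x} + t\mathbf{1}, \mathbf{y} - t\mathbf{1})$, which translates into $(D_1, D_2)$ being determined up to the rescaling $(D_1, D_2) \mapsto (\alpha D_1, \alpha^{-1} D_2)$ for $\alpha > 0$, as required.
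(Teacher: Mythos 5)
The paper does not prove this statement at all: Theorem~3.4 is imported as a black box from Rothblum--Schneider (the bracketed citation ``Theorem 2(a)--(b)'' is the proof, as far as the paper is concerned) and is used only to derive Proposition~3.5 and the retraction. So your argument is not competing with an in-paper proof; what you have written is essentially a reconstruction of the proof in the cited reference itself, whose title is literally ``\ldots via optimization'': the potential $\Phi(\mathbf{x},\mathbf{y})=\sum_{i,j}A_{ij}e^{x_i+y_j}-\mathbf{r}^T\mathbf{x}-\mathbf{c}^T\mathbf{y}$, the identification of critical points with scalings via $(D_1)_{ii}=e^{x_i}$, the use of $B$ to kill bounded recession directions through the identity $\sum_{i,j}B_{ij}(u_i+v_j)=\mathbf{r}^T\mathbf{u}+\mathbf{c}^T\mathbf{v}$, and the Hessian computation for uniqueness when $A>0$ are exactly the ingredients there, and they are correct. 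Two small repairs. First, in the necessity direction your parenthetical about vanishing entries of $\mathbf{r}$ works against you: if $(D_1)_{ii}=0$ then row $i$ of $B=D_1AD_2$ is zero while row $i$ of $A$ need not be, which \emph{destroys} rather than delivers the pattern condition; the clean statement takes $D_1,D_2$ with positive diagonals (as in the source, and as is all that the paper needs), in which case necessity is one line and the digression should be dropped. Second, your recession analysis silently uses $B\ge 0$ (needed for the term-by-term sign argument), and the conclusion that bounded recession directions lie ``inside the trivial invariance line'' is only true when the bipartite support graph of $A$ is connected --- for $A=I$ the lineality space is $n$-dimensional. Neither point is fatal: what you actually establish, namely $u_i+v_j=0$ on $\operatorname{supp}(A)$ together with $\mathbf{r}^T\mathbf{u}+\mathbf{c}^T\mathbf{v}=0$, already shows $\Phi$ is constant along any such direction, so $\Phi$ is coercive modulo its full lineality space and a minimizer exists; connectivity is needed only for the one-dimensionality of the invariance, which you correctly invoke only in the positive case relevant to uniqueness.
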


From the previous result, we can obtain the following generalization which allows us to obtain a matrix on the manifold $\mathbb{S}_n^{\boldsymbol{\pi}}$ through suitable diagonal scaling.
\begin{proposition}\label{prop:extsk}
	Let $A \in \mathbb{R}^{n \times n}$ be a matrix with positive entries. Then  there exist diagonal matrices $D_1$ and $D_2$ such that
	\[
	D_1 A D_2 \mathbf{1} = \mathbf{1}, \qquad \boldsymbol{\pi}^T D_1 A D_2  = \boldsymbol{\pi}^T.
	\]
	Moreover, $D_1$ and $D_2$ are diagonal matrices such that
	$D_1\widehat A D_2 \mathbf{1}=\boldsymbol{\pi}$ and $\mathbf{1}^T D_1\widehat A D_2 =\boldsymbol{\pi}^T$, where $\widehat A=\mathrm{diag}(\boldsymbol{\pi})A$. 
\end{proposition}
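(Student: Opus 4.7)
The plan is to reduce the statement to Theorem~\ref{thm:sink} (the Sinkhorn--Knopp generalization of Rothblum), applied to the positive matrix $\widehat{A} = D_{\boldsymbol{\pi}} A$ with the symmetric target marginals $\mathbf{r} = \mathbf{c} = \boldsymbol{\pi}$. The whole ``moreover'' clause is precisely the output of that theorem, and the statements involving $A$ itself will follow by pulling $D_{\boldsymbol{\pi}}$ through the diagonal scalings.

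First I would verify the hypothesis of Theorem~\ref{thm:sink}. Since $A>0$ and $\boldsymbol{\pi}>0$, the matrix $\widehat{A} = D_{\boldsymbol{\pi}} A$ has strictly positive entries, so any everywhere-positive matrix trivially shares its nonzero pattern. The rank-one matrix $B = \boldsymbol{\pi}\boldsymbol{\pi}^T$ satisfies $B \mathbf{1} = \boldsymbol{\pi}(\boldsymbol{\pi}^T \mathbf{1}) = \boldsymbol{\pi}$ and $B^T \mathbf{1} = \boldsymbol{\pi}$ thanks to the normalization $\boldsymbol{\pi}^T\mathbf{1}=1$, and it is clearly positive. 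Theorem~\ref{thm:sink} then yields diagonal matrices $D_1, D_2$ (unique up to a positive scalar factor, by positivity of $\widehat{A}$) such that
\[
D_1 \widehat{A} D_2 \mathbf{1} = \boldsymbol{\pi}, \qquad \mathbf{1}^T D_1 \widehat{A} D_2 = \boldsymbol{\pi}^T,
\]
which is exactly the second displayed claim.

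The first displayed claim would then be a short calculation exploiting the fact that diagonal matrices commute and that $D_{\boldsymbol{\pi}}^{-1} \boldsymbol{\pi} = \mathbf{1}$. Specifically, writing $\widehat{A} = D_{\boldsymbol{\pi}} A$ and commuting $D_1$ with $D_{\boldsymbol{\pi}}$ gives $D_{\boldsymbol{\pi}} (D_1 A D_2 \mathbf{1}) = D_1 \widehat{A} D_2 \mathbf{1} = \boldsymbol{\pi}$, so $D_1 A D_2 \mathbf{1} = \mathbf{1}$; analogously, using the identity $\mathbf{1}^T D_1 D_{\boldsymbol{\pi}} = \boldsymbol{\pi}^T D_1$, one gets $\boldsymbol{\pi}^T D_1 A D_2 = \mathbf{1}^T D_1 \widehat{A} D_2 = \boldsymbol{\pi}^T$.

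The only conceptual obstacle is recognizing the correct symmetrizing change of variable $A \mapsto D_{\boldsymbol{\pi}} A$: the original asymmetric constraints (row-stochasticity with right Perron vector $\mathbf{1}$ and left Perron vector $\boldsymbol{\pi}$) are inaccessible to the standard Sinkhorn theorem, whereas after this rescaling the problem becomes a standard doubly-stochastic-type scaling problem with matching row and column sums equal to $\boldsymbol{\pi}$, for which the auxiliary witness $B = \boldsymbol{\pi}\boldsymbol{\pi}^T$ supplies the existence hypothesis immediately.
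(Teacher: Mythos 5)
Your proposal is correct and follows essentially the same route as the paper: apply the Rothblum--Sinkhorn theorem to $\widehat{A}=D_{\boldsymbol{\pi}}A$ with $\mathbf{r}=\mathbf{c}=\boldsymbol{\pi}$ and then pull $D_{\boldsymbol{\pi}}$ through the diagonal scalings by commutativity. Your explicit verification of the theorem's hypothesis via the witness $B=\boldsymbol{\pi}\boldsymbol{\pi}^T$ is a small refinement the paper leaves implicit, but the argument is otherwise identical.
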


\begin{proof}
	Consider the matrix $\widehat A=\mathrm{diag}(\boldsymbol{\pi})A$. 
	By setting $\mathbf{c}=\mathbf{r}=\boldsymbol{\pi}$, according to Theorem~\ref{thm:sink} applied to $\widehat A$, there exist diagonal matrices $D_1$ and $D_2$ such that $D_1\widehat A D_2 \mathbf{1}=\boldsymbol{\pi}$ and $\mathbf{1}^T D_1\widehat A D_2 =\boldsymbol{\pi}^T$.
	Since diagonal matrices commute, from the first equality we obtain
	$\mathrm{diag}(\boldsymbol{\pi})^{-1}D_1\widehat A D_2 \mathbf{1}=\mathbf{1}$, so that
	$D_1 A D_2 \mathbf{1}=\mathbf{1}$; from the second equality, we find that $\mathbf{1}^T \mathrm{diag}(\boldsymbol{\pi}) D_1 A D_2 =\boldsymbol{\pi}^T$, i.e., $\boldsymbol{\pi}^T D_1 A D_2  = \boldsymbol{\pi}^T$.   
\end{proof}

{The above result,
combined with Theorem~\ref{thm:retraction-on-embedded-manifold},
provides an expression for the retraction to the manifold $\mathbb{S}_n^{\boldsymbol{\pi}}$.}
\begin{theorem}[Retraction]
	The map $R: \mathcal{T}\mathbb{S}_n^{\boldsymbol{\pi}} \longrightarrow \mathbb{S}_n^{\boldsymbol{\pi}} $ whose restriction $R_{S}$ to $ \mathcal{T}_S\mathbb{S}_n^{\boldsymbol{\pi}} $ is given by:
	\begin{align}
		R_{S}(\xi_S) = S + \xi_S,
	\end{align}
	is a well-defined retraction on $\mathbb{S}_n^{\boldsymbol{\pi}}$ in the sense of Definition~\ref{def:retraction} whenever $\xi_S$ is in a neighborhood of $\mathbf{0}_{S}$, i.e., whenever $S > - \xi_S$ entry-wise.
	\label{th4}
\end{theorem}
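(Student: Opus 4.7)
The plan is to verify directly each condition of Definition~\ref{def:retraction}, exploiting the fact that $\mathbb{S}_n^{\boldsymbol{\pi}}$ is defined by two linear (affine) constraints together with an open (positivity) condition, so that the linearized structure encoded in the tangent space $\mathcal{T}_S\mathbb{S}_n^{\boldsymbol{\pi}}$ matches the manifold constraints exactly. This is the reason I expect no appeal to the Sinkhorn--Knopp-type scaling of Proposition~\ref{prop:extsk} to be needed for this particular retraction: the affine part of the constraints is preserved by addition, and only positivity requires a local neighborhood argument.

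First, I would check that $R_S(\xi_S)=S+\xi_S$ actually lands in $\mathbb{S}_n^{\boldsymbol{\pi}}$ in a neighborhood of the zero section. By Lemma~\ref{lem:tangent-space}, any $\xi_S\in\mathcal{T}_S\mathbb{S}_n^{\boldsymbol{\pi}}$ satisfies $\xi_S\mathbf{1}=\mathbf{0}$ and $\boldsymbol{\pi}^T\xi_S=\mathbf{0}$, hence
\[
(S+\xi_S)\mathbf{1} = S\mathbf{1}+\xi_S\mathbf{1}=\mathbf{1},\qquad \boldsymbol{\pi}^T(S+\xi_S) = \boldsymbol{\pi}^T S+\boldsymbol{\pi}^T\xi_S=\boldsymbol{\pi}^T,
\]
so both affine constraints defining $\mathbb{S}_n^{\boldsymbol{\pi}}$ are satisfied automatically. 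Since $S>0$ strictly, there exists an open neighborhood of $\mathbf{0}_S$ in $\mathcal{T}_S\mathbb{S}_n^{\boldsymbol{\pi}}$ on which $S+\xi_S>0$ entry-wise; this is exactly the stated condition $S>-\xi_S$, which ensures $R_S(\xi_S)\in\mathbb{S}_n^{\boldsymbol{\pi}}$.

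Next, I would verify the two defining properties of a retraction. Centering is immediate: $R_S(\mathbf{0})=S+\mathbf{0}=S$. For local rigidity, the curve $\gamma_{\xi_S}(\tau)=R_S(\tau\xi_S)=S+\tau\xi_S$ is affine in $\tau$, so
\[
\left.\frac{d\gamma_{\xi_S}(\tau)}{d\tau}\right|_{\tau=0}=\xi_S,\qquad\forall\,\xi_S\in\mathcal{T}_S\mathbb{S}_n^{\boldsymbol{\pi}}.
\]
Smoothness of $R$ as a map from the tangent bundle to $\mathbb{S}_n^{\boldsymbol{\pi}}$ is inherited from the smoothness of vector addition in the ambient Euclidean space $\mathbb{R}^{n\times n}$; since $\mathbb{S}_n^{\boldsymbol{\pi}}$ is an embedded submanifold and the image lies in it whenever $\xi_S$ is in the prescribed neighborhood, $R$ restricted to this open subset of $\mathcal{T}\mathbb{S}_n^{\boldsymbol{\pi}}$ is smooth.

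The main obstacle is essentially bookkeeping: because $\mathbb{S}_n^{\boldsymbol{\pi}}$ is not open in $\mathbb{R}^{n\times n}$, we cannot claim $R$ is defined on all of $\mathcal{T}\mathbb{S}_n^{\boldsymbol{\pi}}$; we must carefully confine the statement to a neighborhood on which positivity survives, which is precisely the set $\{\xi_S:S+\xi_S>0\}$ in accordance with the local definition of a retraction. Once this neighborhood is explicitly identified, the three items above cover every requirement of Definition~\ref{def:retraction}, yielding the desired conclusion.
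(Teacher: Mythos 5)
Your proof is correct, but it takes a different route from the paper. The paper does not verify the axioms of Definition~\ref{def:retraction} directly; instead it invokes Theorem~\ref{thm:retraction-on-embedded-manifold} by constructing a diffeomorphism $\phi(S,(\mathbf{d}_1,\mathbf{d}_2))=\operatorname{diag}(1,\mathbf{d}_1)\,S\,\operatorname{diag}(\mathbf{d}_2)$ whose inverse is furnished by the generalized Sinkhorn--Knopp scaling of Proposition~\ref{prop:extsk}, checks the dimension count $(n-1)^2+(2n-1)=n^2$ and the neutral-element property, and only at the very end observes---exactly as you do---that since $\xi_S\mathbf{1}=\mathbf{0}$ and $\boldsymbol{\pi}^T\xi_S=\mathbf{0}$ the point $S+\xi_S$ already satisfies both affine constraints, so the Sinkhorn projection acts as the identity and the retraction collapses to $R_S(\xi_S)=S+\xi_S$. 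Your argument short-circuits all of this: because the manifold is cut out by affine equations plus an open positivity condition, and the tangent space is precisely the kernel of the linearized constraints, you verify membership, centering, local rigidity, and smoothness directly, with the only genuinely local ingredient being the positivity condition $S>-\xi_S$. What the paper's heavier machinery buys is reuse: the same diffeomorphism $\phi$ and the Sinkhorn operator $\mathcal{S}$ are needed immediately afterwards for the first-order retraction $\hat{R}_S(\xi_S)=\mathcal{S}(S\odot\exp(\xi_S\oslash S))$, where a direct verification is no longer trivial, so establishing the framework once pays off there. Your approach is the more economical proof of this particular statement.
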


\begin{proof}
	Since $\mathbb{S}_n^{\boldsymbol{\pi}}$ an embedded manifold, we apply Theorem~\ref{thm:retraction-on-embedded-manifold}, where the diffeomorphism $\phi$ is obtained by means of the extension of {the} Sinkhorn theorem given in Proposition~\ref{prop:extsk}.  Since we deal with matrices with positive entries, the result in Proposition~\ref{prop:extsk} is invariant with respect to the scaling $D_1$ and $D_2$. Thus we can assume, without loss of generality, that the first diagonal element $(D_1)_{11}=1$. Then, the map $\phi$ we need to construct is given by
	\begin{align*}
		\phi: \mathbb{S}_n^{\boldsymbol{\pi}} \times \myR^{2n-1} &\longrightarrow \myR^{n \times n} & \begin{array}{l}\myR^{n \times n} = \{ S \in \mathbb{R}^{n \times n} \ : \ S > 0\},\\
			\myR^{2n-1} = \{ \mathbf{x} \in \mathbb{R}^{2n-1} \,:\, \mathbf{x} > 0 \},\end{array} \\[-0.4em]
		\left(S,\begin{pmatrix}
			\mathbf{d}_1 \\ \mathbf{d}_2
		\end{pmatrix}\right) &\longmapsto \operatorname{diag}(1,\mathbf{d}_1) S \operatorname{diag}(\mathbf{d}_2).
	\end{align*}
	Such $\phi$ satisfies all the requirements of Theorem~\ref{thm:retraction-on-embedded-manifold}. Indeed, both $\myR^{2n-1}$ and $\myR^{n \times n}$ are manifold as open subsets of the manifolds $\mathbb{R}^{2n-1}$ and  $\mathbb{R}^{n \times n}$, respectively. Furthermore, they satisfy the dimensionality relation since \[\dim(\mathbb{S}_n^{\boldsymbol{\pi}}) + \dim( \myR^{2n-1}) = (n-1)^2+2n-1 = n^2 = \dim(\mathbb{R}^{n \times n}).\] 
	Finally, the identity element $I \equiv \mathbf{1}$ of $\mathbb{R}^{2n-1}$ satisfies $\phi(S,\mathbf{1}) = S$, and $\phi$ inherits the required regularity from the regularity of the matrix product. To build the projection $\pi_1$ in Theorem~\ref{thm:retraction-on-embedded-manifold} we need the existence of the inverse map $\phi^{-1}$. This {amounts} to an application of the (modified) Sinkhorn-Knopp's algorithm scaling the rows and the columns of the matrix. Observe that this is again a smooth map for the regularity of the matrix product. We have therefore proved that $\phi$ is a diffeomorphism. By Theorem~\ref{thm:retraction-on-embedded-manifold}, this means that $\pi_1(\phi^{-1}(S+\xi_S))$ is a retraction for $\xi_S$ in the neighborhood of $\mathbf{0}_{S}$, i.e., $(S+\xi_S) \in \myR^{n \times n} $ which can explicitly written in an element-wise sense as $S_{ij} > - \xi_S$. Using the definition of $\mathbb{S}_n^{\boldsymbol{\pi}}$ and Lemma~\ref{lem:tangent-space} the inverse map is the identity, since
	\begin{align*}
		(S+\xi_S)\mathbf{1} &= S\mathbf{1}+\xi_S\mathbf{1} = \mathbf{1}+ \mathbf{0} =\mathbf{1}, \\
		(S+\xi_S)^T\boldsymbol{\pi} &= S^T\boldsymbol{\pi}+\xi_S^T\boldsymbol{\pi} = \boldsymbol{\pi} + \mathbf{0} = \boldsymbol{\pi},
	\end{align*}
	hence, the canonical retraction is defined as $R_{S}(\xi_S) = S + \xi_S$.
\end{proof}

To avoid the deterioration of the quality of the analogous retraction on $\mathbb{S}_n^{\mathbf{1}}$ in the presence of small modulus elements in the iterations of the Riemannian optimization algorithms, in~\cite{Douik8861409} a modification based on the combination of the entry-wise exponential of a matrix and the Sinkhorn-Knopp’s algorithm (Theorem~\ref{thm:sink}) is proposed. We adapt here such proposal to the manifold $\mathbb{S}_n^{\boldsymbol{\pi}}$. 

\begin{theorem}
	The map $\hat{R}: \mathcal{T}\mathbb{S}_n^{\boldsymbol{\pi}} \longrightarrow \mathbb{S}_n^{\boldsymbol{\pi}} $ whose restriction $\hat{R}_{S}$ to $ \mathcal{T}_S\mathbb{S}_n^{\boldsymbol{\pi}} $ is given by:
	\begin{align}\label{eq:exp-retraction}
		\hat{R}_{S}(\xi_S) = \mathcal{S}\left( S \odot \exp(\xi_S \oslash S )\right),
	\end{align}
	is a first-order retraction on $\mathbb{S}_n^{\boldsymbol{\pi}}$, where $\mathcal{S}\left( \cdot \right)$ represents an application of the modified Sinkhorn-Knopp’s algorithm in Proposition~\ref{prop:extsk}, and $\exp(\cdot)$ the entry-wise exponential.
\end{theorem}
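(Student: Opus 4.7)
The map $\hat{R}_S$ is built from three operations: entry-wise division by $S$, entry-wise exponentiation, entry-wise multiplication by $S$, and the modified Sinkhorn scaling $\mathcal{S}$ of Proposition~\ref{prop:extsk}. The plan is to verify the three requirements in Definition~\ref{def:retraction} in turn: well-definedness into $\mathbb{S}_n^{\boldsymbol{\pi}}$, centering, and local rigidity. The underlying observation is that $S \odot \exp(\xi_S \oslash S)$ is always a positive matrix even when $S + \xi_S$ would not be, so the exponential step simply replaces the Euclidean update of Theorem~\ref{th4} with a multiplicative update that keeps positivity for free, after which $\mathcal{S}$ restores the row-sum and left-eigenvector constraints.

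First, for well-definedness and smoothness, since $S > 0$ entry-wise and $\exp(\cdot) > 0$, the matrix $S \odot \exp(\xi_S \oslash S)$ is strictly positive, so Proposition~\ref{prop:extsk} yields a unique matrix in $\mathbb{S}_n^{\boldsymbol{\pi}}$ once one fixes the scalar freedom by imposing $(D_1)_{11} = 1$, exactly as in the proof of Theorem~\ref{th4}. Smoothness of $\mathcal{S}$ on the open cone of positive matrices follows from the implicit function theorem applied to the $2n-1$ defining Sinkhorn equations, using the same diffeomorphism $\phi$ employed in Theorem~\ref{th4}.

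Second, centering is immediate: at $\xi_S = \mathbf{0}$ the matrix $\xi_S \oslash S$ vanishes, so $\exp(\xi_S \oslash S) = \mathbf{1}\mathbf{1}^T$ and $S \odot \mathbf{1}\mathbf{1}^T = S$. Since $S \in \mathbb{S}_n^{\boldsymbol{\pi}}$, the normalized scaling returns $\mathcal{S}(S) = S$, hence $\hat{R}_S(\mathbf{0}) = S$.

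The core step is local rigidity. Setting $M(\tau) = S \odot \exp(\tau \xi_S \oslash S)$, the entry-wise Taylor expansion $\exp(\tau x) = 1 + \tau x + O(\tau^2)$ gives
\[
M(\tau) = S + \tau \xi_S + O(\tau^2),
\]
so $M(0) = S$ and $\dot{M}(0) = \xi_S \in \mathcal{T}_S \mathbb{S}_n^{\boldsymbol{\pi}}$. By the chain rule,
\[
\left.\frac{d}{d\tau}\hat{R}_S(\tau \xi_S)\right|_{\tau=0} = \mathrm{D}\mathcal{S}(S)[\xi_S].
\]
Since $\mathcal{S}$ is the identity on $\mathbb{S}_n^{\boldsymbol{\pi}}$, its differential at $S$ restricted to $\mathcal{T}_S \mathbb{S}_n^{\boldsymbol{\pi}}$ is the identity: given any tangent $\xi_S$, Lemma~\ref{lem:tangent-space} yields a smooth curve $c(\tau) \subset \mathbb{S}_n^{\boldsymbol{\pi}}$ with $c(0) = S$ and $\dot{c}(0) = \xi_S$, along which $\mathcal{S}(c(\tau)) = c(\tau)$, whence $\mathrm{D}\mathcal{S}(S)[\xi_S] = \xi_S$. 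Combining the two equalities gives $\dot{\hat{R}}_S(\tau \xi_S)|_{\tau=0} = \xi_S$, which is local rigidity.

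The main obstacle I anticipate is making the identity $\mathrm{D}\mathcal{S}(S)[\xi_S] = \xi_S$ fully rigorous, because $\mathcal{S}$ is only defined up to a scalar through Theorem~\ref{thm:sink}; this is handled by the same normalization $(D_1)_{11}=1$ that makes $\mathcal{S}$ a genuine single-valued smooth map in the proof of Theorem~\ref{th4}. An equivalent and perhaps cleaner route is to invoke Theorem~\ref{thm:retraction-on-embedded-manifold} with the same diffeomorphism $\phi$ used there, noting that $S \odot \exp(\xi_S \oslash S) = S + \xi_S + O(\|\xi_S\|^2)$, so $\pi_1 \circ \phi^{-1}$ applied to either expression produces the same first-order retraction, and Theorem~\ref{th4} then transfers directly to $\hat{R}$.
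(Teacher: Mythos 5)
Your proof is correct and follows the same overall skeleton as the paper's: verify centering, then obtain local rigidity from the first-order Taylor expansion $S \odot \exp(\tau \xi_S \oslash S) = S + \tau\xi_S + \mathcal{O}(\tau^2)$ and the fact that the Sinkhorn normalization does not disturb the first-order term. Where you diverge is in how that last fact is established. The paper does it by brute force: it writes $\mathcal{S}(S+\tau\xi_S) = (D_1+\delta D_1)(S+\tau\xi_S)(D_2+\delta D_2)$, uses $D_1=D_2=I$ at $S$, and derives a $2n\times 2n$ singular linear system for $[\boldsymbol{\delta}_1^T,\boldsymbol{\delta}_2^T]^T$ whose null space is spanned by $[\mathbf{1}^T,-\mathbf{1}^T]^T$, forcing $\delta D_1 S + S\,\delta D_2 = 0$. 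You instead invoke the chain rule together with the abstract observation that a differentiable map which restricts to the identity on the manifold has differential equal to the identity on every tangent vector, computed along a curve lying in $\mathbb{S}_n^{\boldsymbol{\pi}}$ (e.g.\ $c(\tau)=S+\tau\xi_S$, which stays on the manifold for small $\tau$ by Theorem~\ref{th4}). Your route is cleaner and avoids the perturbation bookkeeping, but it shifts the entire burden onto the differentiability of $\mathcal{S}$ as a single-valued map on an ambient neighborhood of $S$ in the positive cone; you correctly flag this and discharge it via the normalization $(D_1)_{11}=1$ and the diffeomorphism $\phi$ of Theorem~\ref{th4} (the implicit function theorem applied to the $2n-1$ scaling equations). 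One small remark: the uniqueness you need is that of the product $D_1AD_2$, which holds automatically since the scalar ambiguity $D_1\mapsto cD_1$, $D_2\mapsto c^{-1}D_2$ cancels in the product; the normalization is only needed to make the factors, and hence the implicit-function argument, well posed. Your closing alternative --- feeding $S\odot\exp(\xi_S\oslash S)=S+\xi_S+\mathcal{O}(\|\xi_S\|^2)$ directly into Theorem~\ref{thm:retraction-on-embedded-manifold} --- is also sound and is essentially how the paper justifies calling $\hat R$ a \emph{first-order} retraction.
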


\begin{proof}
	We need to show Definition~\ref{def:retraction} is verified. The map~\eqref{eq:exp-retraction} is centered, since for an $S \in \mathbb{S}_n^{\boldsymbol{\pi}}$ 
	\[
	\hat{R}_{S}(0) = \mathcal{S}\left( S \odot \exp(0 \oslash S )\right) = \mathcal{S}\left( S \odot \exp(0)\right) = \mathcal{S}\left( S \right) = S,
	\]
	having selected $D_1 = D_2 = I$ in Proposition~\ref{prop:extsk}. To prove the local rigidity, we need to show that the curve $\gamma_{\xi_s}(\tau) = \hat{R}_S(\tau \xi_S)$ satisfies
	\[
	\left.\frac{\mathrm{d}\gamma_{\xi_s}(\tau) }{\mathrm{d}\tau}\right\rvert_{\tau = 0} = \xi_S, \quad \forall\,\xi_S \in \mathcal{T}_S\mathbb{S}_n^{\boldsymbol{\pi}}.
	\]
	By definition
	\[
	\left.\frac{\mathrm{d}\gamma_{\xi_s}(\tau) }{\mathrm{d}\tau}\right\rvert_{\tau = 0} = \lim_{\tau \rightarrow 0} \frac{\mathcal{S}\left( S \odot \exp(\tau \xi_S \oslash S )\right) - S}{\tau} = \lim_{\tau \rightarrow 0} \frac{\mathcal{S}( S + \tau \xi_S + \mathcal{O}(\tau^2)) - S}{\tau},
	\]    
	where the last equality follows from the first order Taylor expansion of the exponential
	\[
	S \odot \exp(\tau \xi_S \oslash S ) = S \odot \left( 1 + \tau \xi_S \oslash S + \mathcal{O}(\tau^2) \right) = S + \tau \xi_S + \mathcal{O}(\tau^2)\text{ for } \tau \rightarrow 0.
	\]
	As in the proof of Theorem~\ref{thm:retraction-on-embedded-manifold}, we can now select $\tau$ small enough for having $S + \tau \xi_S$ a matrix with all positive entries, and apply Proposition~\ref{prop:extsk} to write
	\begin{equation*}
	\begin{split}
	\mathcal{S}( S + \tau \xi_S) = & \, (D_1 + \delta D_1) ( S + \tau \xi_S ) (D_2 + \delta D_2) \\ = & \, D_1 S D_2 + D_1 ( \tau \xi_S ) D_2 + \delta D_1 S D_2 + D_1 S \delta D_2,
	\end{split}
	\end{equation*}
	since $S \in \mathbb{S}_n^{\boldsymbol{\pi}}$ we have $D_1 = D_2 = I$, hence
	\[
	\mathcal{S}( S + \tau \xi_S) =  S  + \tau \xi_S  + \delta D_1 S  + S \delta D_2.
	\]
	We exploit now that $\xi_S \in \mathcal{T}_S\mathbb{S}_n^{\boldsymbol{\pi}}$ (Lemma~\ref{lem:tangent-space}), and write
	\begin{equation*}
	\begin{split}
		\mathbf{1} \equiv \mathcal{S}( S + \tau \xi_S)\mathbf{1} = & \,  S\mathbf{1}  + \tau \xi_S\mathbf{1}  + \delta D_1 S \mathbf{1}  + S \delta D_2 \mathbf{1} \\ 
		= & \, \mathbf{1}  + \delta D_1 \mathbf{1} + S \delta D_2 \mathbf{1} \\
		\boldsymbol{\pi}^T \equiv \boldsymbol{\pi}^T \mathcal{S}( S + \tau \xi_S) = & \,  \boldsymbol{\pi}^TS  + \tau \boldsymbol{\pi}^T\xi_S  + \boldsymbol{\pi}^T \delta D_1 S  + \boldsymbol{\pi}^T S \delta D_2 \\ 
		= & \, \boldsymbol{\pi}^T  + \boldsymbol{\pi}^T \delta D_1 S  + \boldsymbol{\pi}^T \delta D_2,
	\end{split}
	\end{equation*}
	equivalently
	\[
	\hat{\mathcal{M}} \begin{bmatrix}
		\boldsymbol{\delta}_1 \\
		\boldsymbol{\delta_2} 
	\end{bmatrix} \equiv \begin{bmatrix}
		I & S \\
		S^T D_{\boldsymbol{\pi}} & D_{\boldsymbol{\pi}}
	\end{bmatrix} \begin{bmatrix}
		\delta D_1 \mathbf{1} \\
		\delta D_2 \mathbf{1}
	\end{bmatrix} = \begin{bmatrix}
		\mathbf{0} \\
		\mathbf{0}
	\end{bmatrix}.
	\]
	The null space of $\hat{\mathcal{M}}$ is generated by the $[\mathbf{1}^T,-\mathbf{1}^T]^T$ vector, equivalently $\boldsymbol{\delta}_1 = - \boldsymbol{\delta}_2 = c \mathbf{1}$, hence $\delta D_1 S  + S \delta D_2 = 0$. Therefore, we have just proved that $\mathcal{S}(S + \tau \xi_S + \mathcal{O}(\tau^2)) = S + \tau \xi_S + \mathcal{O}(\tau^2)$, and consequently
	\[
	\left.\frac{\mathrm{d}\gamma_{\xi_s}(\tau) }{\mathrm{d}\tau}\right\rvert_{\tau = 0} = \lim_{\tau \rightarrow 0} \frac{ S + \tau \xi_S + \mathcal{O}(\tau^2) - S}{\tau} = \xi_S.
	\]
\end{proof}

\subsection{Computational issues}\label{sec:computational_issues}
We have implemented this new Riemannian manifold in a format compatible with the Manopt library \cite{manopt}, i.e., we have produced a Matlab function that outputs a \mintinline{matlab}{struct} variable whose fields implement the operation on the manifold, i.e., the \mintinline{matlab}{function} with prototype
\begin{minted}[bgcolor=bg,fontsize=\small]{matlab}
function M = multinomialfixedstochasticfactory(pi,optionsolve)
%
%
end
\end{minted}
in which the \mintinline{matlab}{optionsolve} contains options concerning the solution of the auxiliary linear systems, that can be selected when instantiating the manifold. The implementation is based on the \verb|multinomialdoublystochasticfactory.m| code by A. Douik and N. Boumal; see~\cite{Douik8861409} and the relevant references in~\cite{manopt}. 

For the computation of the various projections on the tangent space and for the computation of the Riemannian Hessian we have to solve several compatible singular linear systems of the form
\begin{equation}\label{eq:atildes}
	\mathcal{A} \begin{bmatrix}
		\mathbf{x}\\
		\mathbf{y}
	\end{bmatrix} =
	\begin{bmatrix}
		\mathbf{c}\\
		\mathbf{d}
	\end{bmatrix},
	~~~
	\mathcal{A} = \begin{bmatrix}
		I & D_{\boldsymbol{\pi}} S \\
		S^T D_{\boldsymbol{\pi}} & \mathrm{diag}(S^T D_{\boldsymbol{\pi}} \boldsymbol{\pi})
	\end{bmatrix}.
\end{equation}
To this purpose, we want to use an iterative method of the Krylov type to avoid assembling the $2 \times 2$ block matrix. Since the system is symmetrical, an indication of the convergence properties can be obtained starting from the spectral properties of the matrix $\mathcal{A}$. 

\begin{proposition}[Spectral properties]\label{pro:spectral_properties}
	Given $S \in \mathbb{S}_n^{\boldsymbol{\pi}}$, the $2 \times 2$ block matrix
	$\mathcal{A}$  defined in \eqref{eq:atildes}
	is such that
	\begin{itemize}
		\item $\mathcal{A}$ is similar to a singular M-matrix,
		\item $\lambda(\mathcal{A}) \in \{ 0 \} \cup \left[\frac{\left(\delta^*+1 -\sqrt{\delta^*  (\delta^* +4 r^*-2)+1}\right)}{2}, \max\{ 1+\| \boldsymbol{\pi} \|_\infty, 2\| \boldsymbol{\pi} \|_\infty\}\right]$, for
		\[
		r^* =       
		\min_{j=1,\ldots,n} \max_{i = 1,\ldots, n} s_{i,j}, \text{ and } \delta^* = \min_{ \substack{i=1,\ldots,n \\ i \neq k}} \left( S^T D_{\boldsymbol{\pi}} \boldsymbol{\pi} \right)_i,
		\]
		and $k$ is such that $r^* =       
		\min_{i = 1,\ldots, n} s_{i,k}$;
		moreover, if $r^*+\delta^*< 1$, then
		$\lambda(\mathcal{A}) \in \{ 0 \} \cup \left[ \delta^* \left( 1 - \frac{r^*}{1-\delta^*}\right), \max\{ 1+\| \boldsymbol{\pi} \|_\infty, 2\| \boldsymbol{\pi} \|_\infty\}\right]$.
	\end{itemize}
\end{proposition}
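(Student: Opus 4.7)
The plan is to establish the three claims separately—similarity to a singular M-matrix, the Gershgorin-type upper bound, and the lower bound on the positive eigenvalues—exploiting that $\mathcal{A}$ is symmetric (both $D_{\boldsymbol{\pi}}$ and $\mathrm{diag}(S^T D_{\boldsymbol{\pi}}\boldsymbol{\pi})$ are diagonal and the two off-diagonal blocks are mutual transposes) and that $[-\boldsymbol{\pi};\mathbf{1}]$ spans its null space, as already noted when~\eqref{eq:alpha_and_beta_values} was shown to be consistent.

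For the similarity assertion, I would conjugate by the signature matrix $J=\mathrm{diag}(I_n,-I_n)$: since $J^2=I$, the spectrum of $\tilde{\mathcal{A}}=J\mathcal{A}J$ coincides with that of $\mathcal{A}$, while the off-diagonal blocks flip sign. The result is a symmetric Z-matrix with positive diagonal and non-positive off-diagonals. Using $S\mathbf{1}=\mathbf{1}$ and $S^T\boldsymbol{\pi}=\boldsymbol{\pi}$, I would verify directly that $\tilde{\mathcal{A}}[\boldsymbol{\pi};\mathbf{1}]=0$, exhibiting a strictly positive null vector. Since $S>0$, the bipartite sparsity pattern of the off-diagonal blocks is complete, making $\tilde{\mathcal{A}}$ irreducible. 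By the standard Perron--Frobenius characterization of irreducible singular M-matrices, $\tilde{\mathcal{A}}$ is a singular M-matrix, hence so is any matrix similar to it. The upper bound is then a direct application of Gershgorin's theorem: rows from the upper block satisfy $|\lambda-1|\leq\sum_j \pi_i S_{ij}=\pi_i\leq \|\boldsymbol{\pi}\|_\infty$, while rows from the lower block satisfy $|\lambda-(S^TD_{\boldsymbol{\pi}}\boldsymbol{\pi})_j|\leq\sum_i S_{ij}\pi_i=\pi_j$, and $(S^TD_{\boldsymbol{\pi}}\boldsymbol{\pi})_j=\sum_i S_{ij}\pi_i^2\leq\|\boldsymbol{\pi}\|_\infty\pi_j\leq\|\boldsymbol{\pi}\|_\infty^2$ closes the argument, delivering the $\max\{1+\|\boldsymbol{\pi}\|_\infty,2\|\boldsymbol{\pi}\|_\infty\}$ bound.

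The main work is the lower bound. Because $\mathcal{A}$ is symmetric positive semi-definite with a simple zero eigenvalue, any eigenvector $[u;v]$ associated with a positive $\lambda$ is orthogonal to $[-\boldsymbol{\pi};\mathbf{1}]$. For $\lambda\neq 1$, the first block equation yields $u=(\lambda-1)^{-1}D_{\boldsymbol{\pi}}Sv$, which substituted into the second block reduces the eigenproblem to the scalar system $(\lambda-1)(\lambda I-\Lambda)v=S^TD_{\boldsymbol{\pi}}^2 S v$ with $\Lambda=\mathrm{diag}(S^TD_{\boldsymbol{\pi}}\boldsymbol{\pi})$. I would normalize $v$ so that $\|v\|_\infty=1$ and examine the row indexed by $k$, the column realizing the minimax $r^*=\min_j\max_i S_{ij}$. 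The bound $\max_i S_{ik}=r^*$ controls the right-hand side of that row in terms of $r^*$ and the single component $v_k$, while $\Lambda_{ii}\geq\delta^*$ for $i\neq k$ controls the contribution of all remaining components. Balancing these two estimates will give a scalar inequality of the form $\lambda^2-(\delta^*+1)\lambda+\delta^*(1-r^*)\leq 0$, whose smaller root is precisely the claimed bound; the weakened form valid for $r^*+\delta^*<1$ follows from the elementary estimate $\sqrt{1-t}\leq 1-t/2$ applied to the appropriate $t$ inside the discriminant, taking advantage of $1-\delta^*>r^*$ to clear the denominator.

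The principal obstacle is the dichotomy underlying the lower bound: the estimate via $r^*$ is only effective when $v$ concentrates at index $k$, whereas the $\delta^*$-bound is effective in the complementary regime. Making this split rigorous, and verifying that the worst configuration of the two regimes is captured exactly by the smaller root of the quadratic above, is the most delicate step. I expect that a careful worst-case analysis of the reduced equation—treating $v_k$ as a free parameter and optimizing over its magnitude against the $\ell^\infty$ norm of the remaining components—will recover precisely the stated bound, matching the quadratic coefficients on the nose.
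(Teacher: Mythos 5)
Your treatment of the first two bullets is sound. The conjugation by $\mathrm{diag}(I,-I)$ instead of the paper's $\mathrm{diag}(D_{\boldsymbol{\pi}},-I)$ is an equally valid way to expose a symmetric Z-matrix with positive null vector $[\boldsymbol{\pi};\mathbf{1}]$ (the paper instead gets a non-symmetric Z-matrix annihilating $\mathbf{1}$), and your Gershgorin argument for the upper bound reproduces the paper's $\|\mathcal{A}\|_\infty$ estimate. These differences are cosmetic.

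The lower bound, however, has a genuine gap, and it sits exactly where you flag "the most delicate step." First, the inequality $\lambda^2-(\delta^*+1)\lambda+\delta^*(1-r^*)\le 0$ cannot hold for every nonzero eigenvalue: it would confine $\lambda$ below the larger root, which is at most $\delta^*+1$ and typically close to $1$, whereas $\mathcal{A}$ has eigenvalues as large as $\max\{1+\|\boldsymbol{\pi}\|_\infty,2\|\boldsymbol{\pi}\|_\infty\}$. So the quadratic can only be derived on a restricted range of $\lambda$, and you do not say how that restriction is obtained. Second, and more fundamentally, the "balancing" of the concentrated-at-$k$ regime against the $\delta^*$ regime is not a technical afterthought — it is the entire content of the bound, and your sketch gives no mechanism for it. A direct row-wise estimate of the reduced equation $(\lambda-1)(\lambda I-\Lambda)v=S^TD_{\boldsymbol{\pi}}^2Sv$ at the maximal component of $v$ (using that the row sums of $S^TD_{\boldsymbol{\pi}}^2S$ equal the diagonal of $\Lambda$) yields only inequalities of the form $\lambda\le 1+\Lambda_{ii}$ or a disjunction that does not isolate the claimed root. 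The paper reaches the quadratic by a different route entirely: Cauchy interlacing reduces the problem to the smallest eigenvalue of a principal $(2n-1)\times(2n-1)$ submatrix, which is a nonsingular irreducible M-matrix and hence has positive smallest eigenvalue; a one-parameter diagonal similarity by $\alpha$ then lets Gershgorin push the first $n$ discs to the right of $1-\alpha^{-1}r$ and the remaining $n-1$ discs to the right of $\delta(1-\alpha)$, and equating these two expressions in $\alpha$ is precisely what produces the quadratic whose smaller root is the stated bound. In that argument $r^*$ arises from the deficient row sums $1-s_{i\hat j}$ of the block $\hat S$ obtained by deleting a column of $S$ — a quantity that has no counterpart in your reduced eigenproblem, which is why I do not see your worst-case analysis recovering the same coefficients without reinventing the interlacing construction. (A minor additional slip: the weakened bound follows from $\sqrt{1+x}\le 1+\tfrac{x}{2}$ applied after factoring $(1-\delta^*)^2$ out of the discriminant, not from an estimate of $\sqrt{1-t}$.)
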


\begin{proof}
	The matrix 
	\[
	\tilde{\mathcal{A}}=D^{-1} \mathcal{A} D=
	\begin{bmatrix}
		I & -S \\
		-S^T D_{\boldsymbol{\pi}}^2 & \mathrm{diag}(S^T D_{\boldsymbol{\pi}} \boldsymbol{\pi}) 
	\end{bmatrix}
	, ~~
	D=\begin{bmatrix}
		D_{\boldsymbol{\pi}} & 0 \\ 0 & -I
	\end{bmatrix},
	\]
	is a Z-matrix. Since $\tilde{\mathcal{A}}\mathbf{1}=0$, then $\tilde{\mathcal{A}}$ is a singular M-matrix \cite{bp:book}. In particular, its eigenvalues     have non negative real part \cite{bp:book}. On the other hand, since $\mathcal{A}$ is symmetric, then its eigenvalues are real. Therefore the eigenvalues of $\mathcal{A}$ are $\lambda_1=0\le \lambda_2\le \cdots\le \lambda_{2n}$. Since $S$ is irreducible and $\boldsymbol{\pi}>0$, then the matrix $\tilde{\mathcal{A}}$ is irreducible as well, therefore $\lambda_2>0$. 
	Since 
	\[
	\mathcal{A}\begin{bmatrix}
		\mathbf{1} \\ \mathbf{1}
	\end{bmatrix}=
	\begin{bmatrix}
		\mathbf{1}+\boldsymbol{\pi} \\ \boldsymbol{\pi}+S^T D_{\boldsymbol{\pi}} \boldsymbol{\pi}
	\end{bmatrix} \le \begin{bmatrix}
		\mathbf{1}+\boldsymbol{\pi} \\ 2\boldsymbol{\pi}
	\end{bmatrix}
	\]
	then
	$\| \mathcal{A}\|_\infty\le \max\{ 1+\| \boldsymbol{\pi} \|_\infty, 2\| \boldsymbol{\pi} \|_\infty\}$, which implies \[\lambda_{2n}\le \max\{ 1+\| \boldsymbol{\pi} \|_\infty, 2\| \boldsymbol{\pi} \|_\infty\}.\]

	To obtain a lower-bound we use the eigenvalue interlacing Theorem \cite[Theorem~4.3.6]{MR2978290} for symmetric matrices. Indeed, 
	by removing the $i$-th column and row from $\mathcal{A}$ we obtain a matrix $\mathcal{A}'$ whose eigenvalues are such that
	\[
	0 = \lambda_1 \le  \lambda_1' \leq \lambda_2 \leq \ldots \leq \lambda_{2n-1}' \leq \lambda_{2n}.
	\]
	Therefore, a lower bound to  $\lambda_1'$ gives a lower bound to $\lambda_2$. 
	For the moment, assume that $i=2n$.
	Since $S$ is a positive matrix, then $\mathcal{A}'$ is irreducible, therefore, by using the same arguments as for $\mathcal{A}$ to show that $\lambda_1>0$, we deduce that $\lambda_1'>0$. 
	To give a positive lower bound to $\lambda_1'$, consider the parametric similarity transformation for $\alpha \in (0,1)$
	\[
	\hat{\mathcal{A}} = \begin{bmatrix}
		\alpha^{-1} D_{\boldsymbol{\pi}}^{-1} & \\
		& I_{n-1}
	\end{bmatrix} \mathcal{A}' \begin{bmatrix}
		\alpha D_{\boldsymbol{\pi}} & \\
		& I_{n-1}
	\end{bmatrix} = \begin{bmatrix}
		I_n & \alpha^{-1} \hat{S} \\
		\alpha \hat{S}^T D_{\boldsymbol{\pi}}^2 & \mathrm{diag}(\hat{S}^T D_{\boldsymbol{\pi}} \boldsymbol{\pi}) 
	\end{bmatrix},
	\]
	where $\hat{S}$ is the $n\times(n-1)$ matrix obtained by removing the last column {of} the matrix $S$. Let us call $\boldsymbol{\delta} = \hat{S}^T D_{\boldsymbol{\pi}} \boldsymbol{\pi}$ the $n-1$-vector generating the diagonal matrix in the $(2,2)$ block, then we can estimate the smallest eigenvalue by Gershgorin Theorem~\cite[Chapter~6]{MR2978290}, that is, we can estimate $\lambda_1'$ by the intersection with the $x$ axis of the left-most circle. The circles for the first $n$ rows have center in $1$ and radii 
	\[
	\alpha^{-1} r_i = \alpha^{-1} (\hat{S} \mathbf{1})_i = \alpha^{-1} \left( \mathbf{1} - S \mathbf{e}_n \right)_i = \alpha^{-1} (1-s_{in}) < \alpha^{-1} 1, \; i = 1,\ldots,n,
	\]
	whilst the circles for the last $n-1$ rows have center $\delta_i$ and radii $\alpha \delta_i$. Thus we have  
	\[
	\lambda_1' > \min \left\lbrace 1 - \alpha^{-1} r, \delta (1-\alpha) \right\rbrace
	\]
	for 
	\[
	r = \alpha^{-1} \max_{i = 1,\ldots,n} r_i, \qquad \delta = \min_{i = 1,\ldots,n-1} \delta_i.
	\]
	Observe now that, instead of removing the last row and column in $\widetilde{A}$, we can optimize the bound by selecting the column $\hat{j}$ to which corresponds
	\[
	r^* = \min_{j=1,\ldots,n} \max_{i = 1,\ldots, n} (1-s_{i,j}),
	\]
	and the corresponding 
	\[
	\delta^* = \min_{ \substack{i=1,\ldots,n \\ i \neq {\hat j}}} \delta_i.
	\]
	We can then solve the problem by solving the minimization problem\pgfmathsetmacro\MathAxis{height("$\vcenter{}$")}
	\[
	\alpha^* = \arg\max\min_{\alpha \in (0,1)}\left\lbrace 1 - \alpha^{-1} r, \delta (1-\alpha) \right\rbrace = \begin{tikzpicture}[baseline={(0, 1cm-\MathAxis pt)}]
		\begin{axis}[
			width=1.65in,
			height=1.65in,
			axis lines=middle,
			xmin = -0.5,
			xmax = 1.5,
			ymin = -0.5,
			ymax = 1.5,
			ytick={0, 1},
			xtick={0, 1},
			]
			\addplot[domain=0:1.5,dashed] {1};
			\addplot[domain=0:1.5,smooth,samples=200] { 1 - 0.1/x )};
			\addplot[domain=0:1.5,smooth,samples=200] { 0.6*(1-x)};
			\addplot[red,mark=*] coordinates {(0.193713,0.483772)} node[pin=25:{$\alpha$}]{};
			\addplot[mark=-,thick] coordinates {(0.1,0.6)} node[pin=140:{$\delta$}]{};
			\addplot[mark=|,thick] coordinates {(0.1,0)} node[pin=150:{$r$}]{};
			\addplot[domain=0.1:0.193713,smooth,samples=200,thick,blue] { 1 - 0.1/x )};
			\addplot[domain=0.193713:1,smooth,samples=200,thick,blue] { 0.6*(1-x)};
		\end{axis}
	\end{tikzpicture}
	\]
	which is therefore equivalent to obtaining the positive root of the quadratic equation
	\[
	1-\frac{r}{\alpha }=(1-\alpha ) \delta \,\Leftrightarrow\,\alpha^* = \frac{\delta - 1 +\sqrt{\delta ^2+\delta  (4 r-2)+1}}{2 \delta }.
	\]
	The lower-bound is then given by
	\[
	\lambda_2 \geq \frac{1}{2} \left(\delta^*+1 -\sqrt{\delta^*  (\delta^* +4 r^*-2)+1}\right). 
	\]
	We can further elaborate on the above bound.
	Indeed, since $(1+x)^{1/2}\le 1+\frac12 x$, we obtain
	\begin{equation*}
	\begin{split}
		\sqrt{\delta^*  (\delta^* +4 r^*-2)+1}  = & \sqrt{ (1-\delta^*)^2(1+ 4 r^* \delta^* (1-\delta^*)^{-2})}\le \\
		& (1-\delta^*) (1+ 2 r^* \delta^* (1-\delta^*)^{-2}),
	\end{split}
	\end{equation*}
	so that
	\[
	\lambda_2\ge \delta^* \left( 1 - \frac{r^*}{1-\delta^*}\right).
	\]
	This latter inequality makes sense if $r^*+\delta^*< 1$. 
\end{proof}

{In Section~\ref{sec:experiment-on-bounds} we show, through some numerical experiments,  the sharpness of the bounds given in the above proposition.}

The eigenvalue properties proved in Proposition~\ref{pro:spectral_properties} suggest different strategies to solve the linear system \eqref{eq:atildes}. Firstly, we can directly apply the Conjugate Gradient (CG) to the linear system, in fact, if the starting vector is not in the null space of the matrix, we do not undergo a breakdown; some preconditioning strategies are discussed in the sequel. Secondly, we can consider using the LSQR method on the system instead. To reduce the dimensionality of the problem, we can solve the system for the Schur complement with respect to the $(1,1)$-block, i.e. we solve instead
\begin{equation}\label{eq:schurversion}
	[ \mathrm{diag}(S^T D_{\boldsymbol{\pi}} \boldsymbol{\pi}) - S^T D_{\boldsymbol{\pi}^2} S ]\mathbf{y} = \mathbf{c} - S^T D_{\boldsymbol{\pi}} \mathbf{b}, \quad \mathbf{x} = \mathbf{b} - D_{\boldsymbol{\pi}} S \mathbf{y}.
\end{equation}
The matrix in the above system is a symmetric irreducible singular M-matrix, therefore it is  semidefinite, with a simple eigenvalue equal to 0.

In both formulations \eqref{eq:atildes} and \eqref{eq:schurversion}, since the basis of the null space is known, we can consider the de-singularized version obtained through a rank 1 update of the matrix; so that, in small size problems, we can employ the $LU$-factorization to solve the associated linear systems. Specifically, for the $2 \times 2$-block formulation \eqref{eq:atildes}, this consists in solving the updated linear system
\[
\left( \begin{bmatrix}
	I & D_{\boldsymbol{\pi}} S \\
	S^T D_{\boldsymbol{\pi}} & \mathrm{diag}(S^T D_{\boldsymbol{\pi}} \boldsymbol{\pi}) 
\end{bmatrix} + \frac{1}{\boldsymbol{\pi}^T\boldsymbol{\pi} + n} \begin{bmatrix}
	\boldsymbol{\pi}\\-\mathbf{1}
\end{bmatrix} [\boldsymbol{\pi}^T,-\mathbf{1}^T]\right) \begin{bmatrix}
	\hat{\mathbf{x}}\\
	\hat{\mathbf{y}}
\end{bmatrix} = \begin{bmatrix}
	\mathbf{b}\\
	\mathbf{c}
\end{bmatrix}.
\]
Indeed, we may easily observe that $[\hat{\mathbf{x}}^T,
\hat{\mathbf{y}}^T]^T$ solves also the original linear system \eqref{eq:atildes}.
{It is worth to point out that linear systems with matrices of the kind $\mathcal{A}+\gamma U U^T$, where $\mathcal{A}$ is positive semi-definite and $U$ is low rank, arise in various applications (see for instance \cite{BenziFaccio}). In our case the correction is rank-one, and it is made by an eigenvector of $\mathcal{A}$, so that  the eigenvalues of the new matrix  are the eigenvalues of the matrix $\mathcal{A}$, except for the eigenvalue 0 which is replaced by 1. Since, for Proposition~\ref{pro:spectral_properties}, the upper bound on the eigenvalues of $\mathcal{A}$ is
$\max\{ 1+\| \boldsymbol{\pi} \|_\infty, 2\| \boldsymbol{\pi} \|_\infty\}\ge 1$,  we expect that the rank one update does not modify conditioning of the linear system \eqref{eq:atildes}. }

For the Schur complement version \eqref{eq:schurversion}, the updated system is given by
\[
\left[ \mathrm{diag}(S^T D_{\boldsymbol{\pi}} \boldsymbol{\pi}) - S^T D_{\boldsymbol{\pi}^2} S + \sigma\frac{1}{n} \mathbf{1}\mathbf{1}^T \right]\hat{\mathbf{y}} = \mathbf{c} - S^T D_{\boldsymbol{\pi}} \mathbf{b}, \quad \hat{\mathbf{x}} = \hat{\mathbf{b}} - D_{\boldsymbol{\pi}} S \hat{\mathbf{y}}.
\]
where $\sigma>0$. As in the previous case $[\hat{\mathbf{x}}^T,
\hat{\mathbf{y}}^T]^T$ solves also the original linear system \eqref{eq:atildes}.
Moreover, the eigenvalues of the matrix in the above system are the eigenvalues of the matrix $\mathrm{diag}(S^T D_{\boldsymbol{\pi}} \boldsymbol{\pi}) - S^T D_{\boldsymbol{\pi}^2} S$, except for the eigenvalue 0 which is replaced by $\sigma$. In order not to deteriorate the conditioning of the system, the parameter $\sigma$ should be chosen between the smallest nonzero eigenvalue and the largest eigenvalue of the matrix $\mathrm{diag}(S^T D_{\boldsymbol{\pi}} \boldsymbol{\pi}) - S^T D_{\boldsymbol{\pi}^2} S$. By using Gershgorin theorem, an upper bound to the spectral radius is given by $2\| S^T D_{\boldsymbol{\pi}} \boldsymbol{\pi} \|_\infty$, while a lower bound can be found by using the Cauchy interlacing theorem, as in the proof of Proposition~\ref{pro:spectral_properties}, by bounding the smallest eigenvalue of a principal $(n-1)\times (n-1)$ matrix. Therefore $\sigma$ might be chosen as the arithmetic mean between these two bounds.

The different strategies can be selected through the \mintinline[breaklines,breakanywhere]{matlab}{optionsolve} variable in the definition phase of the manifold. Default values can be generated through the \mintinline[breaklines,breakanywhere]{matlab}{initoptions()} function, i.e., \mintinline[breaklines,breakanywhere]{matlab}{optionsolve = initoptions()}. The user can select between the $2\times 2$-block formulation of the linear system (\mintinline[breaklines,breakanywhere]{matlab}{optionsolve.formulation = "block";}) and the reduction to the Schur complement (\mintinline[breaklines,breakanywhere]{matlab}{optionsolve.formulation = "schur";}). In both cases the desingularization strategy via the rank-1 update can be activated (\mintinline[breaklines,breakanywhere]{matlab}{options.correction = true;}). The solution method for the linear systems can then be selected between the direct strategy (\mintinline[breaklines,breakanywhere]{matlab}{options.method = "direct";}), the CG (\mintinline[breaklines,breakanywhere]{matlab}{options.method = "cg";}), and the LSQR method ((\mintinline[breaklines,breakanywhere]{matlab}{options.method = "lsqr";}). All the options are case-insensitive. Additional debugging and tracing options can be enabled through this facility and are discussed in the code.

To precondition the CG algorithm we start from an empirical observation, since the target matrix $X$ has to be stochastic we expect, as the dimension $n$ of the problem grows, to encounter a large number of small entries. This suggests using a \emph{diagonally compensated modified incomplete Cholesky}~\cite[Section~10.3.5]{MR1990645} directly on the system matrix. The diagonal compensation term, to avoid the presence of nonpositive pivots, can be taken to be $\min_{i=1,\ldots,n}\pi_i$. Another strategy consists in first scaling the system
\begin{equation*}
\begin{split}
	\left[ I -  \mathrm{diag}(S^T D_{\boldsymbol{\pi}} \boldsymbol{\pi})^{-\nicefrac{1}{2}} S^T D_{\boldsymbol{\pi}^2} S \mathrm{diag}(S^T D_{\boldsymbol{\pi}} \boldsymbol{\pi})^{-\nicefrac{1}{2}} \right]  \tilde{\mathbf{y}} = \\ \qquad \mathrm{diag}(S^T D_{\boldsymbol{\pi}} \boldsymbol{\pi})^{-\nicefrac{1}{2}}  \left( \mathbf{c} - S^T D_{\boldsymbol{\pi}} \mathbf{b}\right),
\end{split}
\end{equation*}
and then recover 
\[
\mathbf{y} = \mathrm{diag}(S^T D_{\boldsymbol{\pi}} \boldsymbol{\pi})^{-\nicefrac{1}{2}} \tilde{\mathbf{y}},  \quad \mathbf{x} = \mathbf{b} - D_{\boldsymbol{\pi}} S \mathbf{y}.
\]
By calling $\tilde{S} = \mathrm{diag}(S^T D_{\boldsymbol{\pi}} \boldsymbol{\pi})^{-\nicefrac{1}{2}} S^T D_{\boldsymbol{\pi}^2} S \mathrm{diag}(S^T D_{\boldsymbol{\pi}} \boldsymbol{\pi})^{-\nicefrac{1}{2}}$, and observing that $\rho(\tilde{S})$ $=1$, we can use a truncated Neumann series as preconditioner, i.e.,
\begin{equation}\label{eq:neumann_preconditioner}
	P_{k,\tau} = I + \sum_{j = 1}^k \hat{S}^j, \qquad (\hat{S})_{p,q} = \begin{cases}
		(\tilde{S})_{p,q}, & |\tilde{S}_{i,j}| \geq \tau,\\
		0, & \text{otherwise}.
	\end{cases} 
\end{equation}
The two strategies can be selected by choosing \mintinline[breaklines,breakanywhere]{matlab}{options.method = "pcg";} or \mintinline[breaklines,breakanywhere]{matlab}{options.method = "pcg2";} respectively in the code. In both cases, the preconditioner must be regenerated anew at each outer iteration of the optimization method, i.e., whenever we move the tangent space.

{The overall cost of the procedure is dominated by $O(n^3)$ matrix-power operations needed for the computation of the objective function. Furthermore, we always need to store at least a dense matrix of the same size $n$ of the problem. All the proposed approaches for the solution of the auxiliary linear systems reduce the storage cost and require a number of operation that is less than cubic. The MATLAB implementation is expected to work on current standard computer for matrices of size $n$ up to few thousands.}

\section{Numerical Examples}\label{sec:numer_ex}

In this section, we will compare the algorithms based on Riemannian optimization with the methods available in the literature. We will also validate the theoretical results discussed in Section~\ref{sec:computational_issues} concerning the location of eigenvalues. Specifically, in Section~\ref{sec:stochastic-old-methods-experiments} we compare the Riemannian optimization on the $\mathbb{S}_n$ manifold with the methods based on constrained optimization available in the literature. Then, in Section~\ref{sec:stochastic-new-methods-experiments} we test the new approach that preserves the stationary distribution, i.e., the Riemannian optimization routines on the $\mathbb{S}_n^{\boldsymbol{\pi}}$ manifold. Furthermore, we numerically investigate also the computational issues discussed in Section~\ref{sec:computational_issues}.  Finally, in Section~\ref{sec:reducible-chains} we test our algorithms on an application in finance, where $A$ is the transition matrix of a Markov chain which represents the dynamics of the different credit ratings. The peculiarity of this problem is that $A$ is reducible, therefore its invariant distribution $\boldsymbol{\pi}$ is not positive, so the manifold $\mathbb{S}_n^{\boldsymbol{\pi}}$ cannot be defined. 

The numerical examples have been executed on a Laptop with Intel\textsuperscript{\textregistered} Core\textsuperscript{\texttrademark} i7-8750H CPU @ 2.20GHz with 16~Gb of memory, and running MATLAB 2023a and Manopt v.7.1. Code and examples are available in the GitHub repository \href{https://github.com/Cirdans-Home/pth-root-stochastic}{github.com/Cirdans-Home/pth-root-stochastic} and can be reproduced.

\subsection{Stochastic \texorpdfstring{$p$th}{pth} root approximation via Riemannian optimization}\label{sec:stochastic-old-methods-experiments}
In this section, we address the problem of the stochastic \texorpdfstring{$p$th}{pth} root approximation via Riemannian optimization with the manifold described in Section~\ref{sec:stochastic-old-methods}. We compare the Riemannian algorithm with the \emph{interior point method} implemented in the MATLAB routine \mintinline{matlab}{fmincon}~\cite{interiorpoint}. For the construction of the test matrices, we have prepared a generator of stochastic matrices of different classes \mintinline[breaklines,breakanywhere]{matlab}{matrixgenerator(n,p,seed,classes,numbers)} that produces matrices of which we want to approximate the stochastic $p$th root;  the classes are described in the Table~\ref{tab:matrix-classes}.
\begin{table}[htbp]
	\centering
	\setminted[matlab]{fontsize=\footnotesize}
	\caption{Matrix classes produced by the \mintinline{matlab}{matrixgenerator} code. The routine also allows fixing the seed of the random number generator so that the set of matrices generated for equal parameters is always the same.}
	\label{tab:matrix-classes}
	\begin{tabular}{p{3cm}p{3.5cm}ccc}
		\toprule
		Name & Description & Size & Embeddability & Ref.\\
		\midrule
		uniform stochastic & %
\begin{tabminted}{matlab}
B = rand(n,n);
D = diag(sum(B,2));
A = D\B;
\end{tabminted} 
		& $n$ & unknown & \\ \cmidrule{2-4}
		$p$th power of uniform stochastic & %
\begin{tabminted}{matlab}
B = rand(n,n);
D = diag(sum(B,2));
A = mpower(D\B,p);
\end{tabminted} 
		& $n$ & yes & \\ \cmidrule{2-4}
		$\exp$ of intensity matrix & %
\begin{tabminted}{matlab}
B = rand(n,n);
B(1:1+n:end) = 0;
D = diag(sum(B,2));
A = expm(B - D);
\end{tabminted} 
		& $n$ & yes & \\ \cmidrule{2-4}
		K80 & %
\begin{tabminted}{matlab}
b = rand(1);
c = sqrt(b)-b;
a = 1-b-2*c;
A0= [a,b;a,b];
E = ones(2,2);
A = [A0,c*E;c*E,A0];
\end{tabminted} 
		& 4 & yes & \cite{Casanellas2020} \\ \cmidrule{2-4}
		& %
\begin{tabminted}{matlab}
b = 0.5*rand(1);
c = (1-2*b)/2;
a = 1-b-2*c;
A0 = [a,b;a,b];
E = ones(2,2);
A = [A0,c*E;c*E,A0];
\end{tabminted} 
		& 4 & no & \cite{Casanellas2020} \\ \cmidrule{2-4}
		Pei &
\begin{tabminted}{matlab}
I = eye(n,n);
J = ones(n,n);
alpha = rand(1)-(1/(n-1))^p;
beta = (1-alpha)/n;
A = alpha*I+beta*J;
\end{tabminted} 
		& $n$ & yes & \cite{LinThesis} \\
		\bottomrule
	\end{tabular}
\end{table}
To obtain the approximation we use the formulation~\ref{alg:a2} and generate $40$ stochastic test matrices for each class, where the dimension of the matrices with variable size is $n = 100$. As Riemannian optimization algorithms we consider the \mintinline{matlab}{trustregions} algorithm~\cite{genrtr} and the \mintinline{matlab}{rlbfgs} algorithm~\cite{Huang2016}. The \mintinline{matlab}{trustregions} algorithm uses both the cost functional, its gradient, and an approximation of the Hessian. On the other hand, the \mintinline{matlab}{rlbfgs} algorithm uses only the cost functional, and the gradient. To approximate Euclidean gradients and Hessian matrices we use automatic differentiation (AD) instead of defining them analytically. All methods are initialized from the same starting point generated as a random point on the manifold of stochastic matrices $\mathcal{S}_n$. We measure the quality of the obtained results in terms of the residual on the cost functional~\ref{alg:a2} and on the time necessary to carry out the optimization. We report the data in the form of a performance profile in Figure~\ref{fig:standard_results}{; see~\cite[Section~2]{PerformanceProfiles} for the general usage of performance profiles to compare algorithms}.
\begin{figure}[htbp]
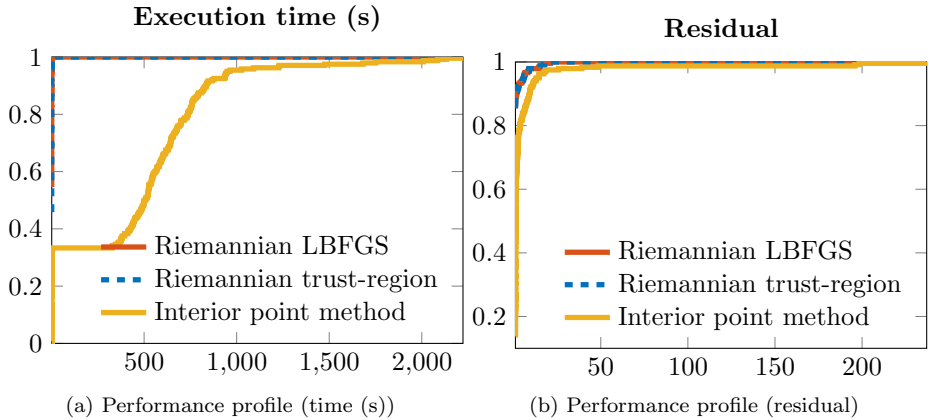

	\centering
	\subfloat[Performance profile (time (s))]{\input{pprofile_residual_stoch}}
	\subfloat[Performance profile (residual)]{\input{pprofile_time_stoch}}
	\caption{Performance comparison for stochastic $p$th root approximation via Riemannian optimization and constrained optimization with the formulation~\ref{alg:a2}. The test matrices are $40$ stochastic matrices from each class in Table~\ref{tab:matrix-classes}.}
	\label{fig:standard_results}
\end{figure}
The results show that the Riemannian algorithms perform much better than their counterpart based on constrained optimization. Both the execution times and the residuals are practically always lower. Furthermore, no significant difference is observed between the use of the {LBFGS and the trust-region algorithm}, i.e., the two lines are practically overlapping and have little difference on an extremely low percentage of problems.

\subsection{Stochastic \texorpdfstring{$p$th}{pth} root approximation preserving the stationary distribution}\label{sec:stochastic-new-methods-experiments}

In this section, we compare the two Riemannian optimization algorithms on the manifolds $\mathbb{S}_n$ and $\mathbb{S}_n^{\boldsymbol{\pi}}$, for the different test matrices from Table~\ref{tab:matrix-classes}. Specifically, we compare the stationary distribution of the approximation $X$ obtained in the two manifolds, with the stationary distribution $\boldsymbol{\pi}$ of $A$, by using the same tolerance request on the gradient norm (\mintinline{matlab}{1e-4}) and the same starting point for the optimization algorithm (\mintinline{matlab}{X0 = M.rand()}) which, in this case, is the \texttt{trustregions} method.

\begin{figure}[htbp]
	\centering
	\subfloat[$p=2$: square root\label{fig:preserving-pi}]{\definecolor{mycolor1}{rgb}{0.00000,0.44700,0.74100}%
\definecolor{mycolor2}{rgb}{0.85000,0.32500,0.09800}%
\begin{tikzpicture}

\begin{axis}[%
width=0.275\columnwidth,
height=1.2in,
at={(0\columnwidth,0in)},
scale only axis,
xmin=1,
xmax=6,
xtick={1,2,3,4,5,6},
xticklabels={{Unif.},{pth power of unif.},{exp of intensity},{K80 (Emb.)},{K80 (Not. Emb.)},{Pei}},
xticklabel style={rotate=90,font=\scriptsize},
ymode=log,
ymin=2.20048160698239e-05,
ymax=0.266865966204034,
yminorticks=true,
ylabel style={font=\color{white!15!black}},
ylabel={$\| X^2 - A \|_F$},
axis background/.style={fill=white},
legend style={at={(1.03,0.5)}, anchor=west, legend cell align=left, align=left, draw=white!15!black}
]
\addplot [color=mycolor1, only marks, mark size=3.5pt, mark=x, mark options={solid, mycolor1}]
  table[row sep=crcr]{%
1	0.256324652067274\\
2	0.0101996845104076\\
3	0.00288243842368615\\
4	0.000168692300399253\\
5	0.00954447899240632\\
6	0.000266727489038104\\
};

\addplot [color=mycolor2, only marks, mark size=3.5pt, mark=o, mark options={solid, mycolor2}]
  table[row sep=crcr]{%
1	0.266865966204034\\
2	0.0113954932238444\\
3	0.000943356234015172\\
4	2.20048160698239e-05\\
5	0.00146056530748389\\
6	0.000151576367967098\\
};

\end{axis}

\begin{axis}[%
width=0.275\columnwidth,
height=1.2in,
at={(0.483\columnwidth,0in)},
scale only axis,
xmin=1,
xmax=6,
xtick={1,2,3,4,5,6},
xticklabels={{Unif.},{pth power of unif.},{exp of intensity},{K80 (Emb.)},{K80 (Not. Emb.)},{Pei}},
xticklabel style={rotate=90,font=\scriptsize},
ymode=log,
ymin=2.08166817117217e-17,
ymax=0.298349169312101,
yminorticks=true,
ylabel style={font=\color{white!15!black}, align=center},
ylabel={Infinity norm error between\\[1ex]target steady state and obtained one},
axis background/.style={fill=white},
legend style={at={(1.03,0.5)}, anchor=west, legend cell align=left, align=left, draw=none, fill=none}
]
\addplot [color=mycolor1, only marks, mark size=3.5pt, mark=x, mark options={solid, mycolor1}]
  table[row sep=crcr]{%
1	5.55111512312578e-17\\
2	2.08166817117217e-17\\
3	2.08166817117217e-17\\
4	5.55111512312578e-17\\
5	5.55111512312578e-17\\
6	2.77555756156289e-17\\
};
\addlegendentry{$S_n^\pi$}

\addplot [color=mycolor2, only marks, mark size=3.5pt, mark=o, mark options={solid, mycolor2}]
  table[row sep=crcr]{%
1	0.0154849912027682\\
2	0.0175258523200822\\
3	0.0162678046483902\\
4	0.298349169312101\\
5	0.00220991811210569\\
6	1.176943888926e-05\\
};
\addlegendentry{$S_n$}

\addplot [color=black, dashed]
  table[row sep=crcr]{%
1	2.22044604925031e-16\\
1.05050505050505	2.22044604925031e-16\\
1.1010101010101	2.22044604925031e-16\\
1.15151515151515	2.22044604925031e-16\\
1.2020202020202	2.22044604925031e-16\\
1.25252525252525	2.22044604925031e-16\\
1.3030303030303	2.22044604925031e-16\\
1.35353535353535	2.22044604925031e-16\\
1.4040404040404	2.22044604925031e-16\\
1.45454545454545	2.22044604925031e-16\\
1.50505050505051	2.22044604925031e-16\\
1.55555555555556	2.22044604925031e-16\\
1.60606060606061	2.22044604925031e-16\\
1.65656565656566	2.22044604925031e-16\\
1.70707070707071	2.22044604925031e-16\\
1.75757575757576	2.22044604925031e-16\\
1.80808080808081	2.22044604925031e-16\\
1.85858585858586	2.22044604925031e-16\\
1.90909090909091	2.22044604925031e-16\\
1.95959595959596	2.22044604925031e-16\\
2.01010101010101	2.22044604925031e-16\\
2.06060606060606	2.22044604925031e-16\\
2.11111111111111	2.22044604925031e-16\\
2.16161616161616	2.22044604925031e-16\\
2.21212121212121	2.22044604925031e-16\\
2.26262626262626	2.22044604925031e-16\\
2.31313131313131	2.22044604925031e-16\\
2.36363636363636	2.22044604925031e-16\\
2.41414141414141	2.22044604925031e-16\\
2.46464646464646	2.22044604925031e-16\\
2.51515151515152	2.22044604925031e-16\\
2.56565656565657	2.22044604925031e-16\\
2.61616161616162	2.22044604925031e-16\\
2.66666666666667	2.22044604925031e-16\\
2.71717171717172	2.22044604925031e-16\\
2.76767676767677	2.22044604925031e-16\\
2.81818181818182	2.22044604925031e-16\\
2.86868686868687	2.22044604925031e-16\\
2.91919191919192	2.22044604925031e-16\\
2.96969696969697	2.22044604925031e-16\\
3.02020202020202	2.22044604925031e-16\\
3.07070707070707	2.22044604925031e-16\\
3.12121212121212	2.22044604925031e-16\\
3.17171717171717	2.22044604925031e-16\\
3.22222222222222	2.22044604925031e-16\\
3.27272727272727	2.22044604925031e-16\\
3.32323232323232	2.22044604925031e-16\\
3.37373737373737	2.22044604925031e-16\\
3.42424242424242	2.22044604925031e-16\\
3.47474747474747	2.22044604925031e-16\\
3.52525252525253	2.22044604925031e-16\\
3.57575757575758	2.22044604925031e-16\\
3.62626262626263	2.22044604925031e-16\\
3.67676767676768	2.22044604925031e-16\\
3.72727272727273	2.22044604925031e-16\\
3.77777777777778	2.22044604925031e-16\\
3.82828282828283	2.22044604925031e-16\\
3.87878787878788	2.22044604925031e-16\\
3.92929292929293	2.22044604925031e-16\\
3.97979797979798	2.22044604925031e-16\\
4.03030303030303	2.22044604925031e-16\\
4.08080808080808	2.22044604925031e-16\\
4.13131313131313	2.22044604925031e-16\\
4.18181818181818	2.22044604925031e-16\\
4.23232323232323	2.22044604925031e-16\\
4.28282828282828	2.22044604925031e-16\\
4.33333333333333	2.22044604925031e-16\\
4.38383838383838	2.22044604925031e-16\\
4.43434343434343	2.22044604925031e-16\\
4.48484848484848	2.22044604925031e-16\\
4.53535353535354	2.22044604925031e-16\\
4.58585858585859	2.22044604925031e-16\\
4.63636363636364	2.22044604925031e-16\\
4.68686868686869	2.22044604925031e-16\\
4.73737373737374	2.22044604925031e-16\\
4.78787878787879	2.22044604925031e-16\\
4.83838383838384	2.22044604925031e-16\\
4.88888888888889	2.22044604925031e-16\\
4.93939393939394	2.22044604925031e-16\\
4.98989898989899	2.22044604925031e-16\\
5.04040404040404	2.22044604925031e-16\\
5.09090909090909	2.22044604925031e-16\\
5.14141414141414	2.22044604925031e-16\\
5.19191919191919	2.22044604925031e-16\\
5.24242424242424	2.22044604925031e-16\\
5.29292929292929	2.22044604925031e-16\\
5.34343434343434	2.22044604925031e-16\\
5.39393939393939	2.22044604925031e-16\\
5.44444444444444	2.22044604925031e-16\\
5.49494949494949	2.22044604925031e-16\\
5.54545454545455	2.22044604925031e-16\\
5.5959595959596	2.22044604925031e-16\\
5.64646464646465	2.22044604925031e-16\\
5.6969696969697	2.22044604925031e-16\\
5.74747474747475	2.22044604925031e-16\\
5.7979797979798	2.22044604925031e-16\\
5.84848484848485	2.22044604925031e-16\\
5.8989898989899	2.22044604925031e-16\\
5.94949494949495	2.22044604925031e-16\\
6	2.22044604925031e-16\\
};
\addlegendentry{$\epsilon$}

\end{axis}
\end{tikzpicture}
	
	\subfloat[$p=5$: fifth root\label{fig:preserving-pi-5}]{\definecolor{mycolor1}{rgb}{0.00000,0.44700,0.74100}%
\definecolor{mycolor2}{rgb}{0.85000,0.32500,0.09800}%
\begin{tikzpicture}

\begin{axis}[%
width=0.275\columnwidth,
height=1.2in,
at={(0\columnwidth,0in)},
scale only axis,
xmin=1,
xmax=6,
xtick={1,2,3,4,5,6},
xticklabels={{Unif.},{pth power of unif.},{exp of intensity},{K80 (Emb.)},{K80 (Not. Emb.)},{Pei}},
xticklabel style={rotate=90,font=\scriptsize},
ymode=log,
ymin=0.000164020760082955,
ymax=0.589082292065185,
yminorticks=true,
ylabel style={font=\color{white!15!black}},
ylabel={$\| X^5 - A \|_F$},
axis background/.style={fill=white},
legend style={at={(1.03,0.5)}, anchor=west, legend cell align=left, align=left, draw=white!15!black}
]
\addplot [color=mycolor1, only marks, mark size=3.5pt, mark=x, mark options={solid, mycolor1}]
  table[row sep=crcr]{%
1	0.589082292065185\\
2	0.00035266809933729\\
3	0.000164020760082955\\
4	0.422080394676951\\
5	0.000295696229626461\\
6	0.217456711137822\\
};

\addplot [color=mycolor2, only marks, mark size=3.5pt, mark=o, mark options={solid, mycolor2}]
  table[row sep=crcr]{%
1	0.439554495065458\\
2	0.000314605434291776\\
3	0.00022733958817614\\
4	0.0567759963838961\\
5	0.00201943566920878\\
6	0.125550416544272\\
};

\end{axis}

\begin{axis}[%
width=0.275\columnwidth,
height=1.2in,
at={(0.483\columnwidth,0in)},
scale only axis,
xmin=1,
xmax=6,
xtick={1,2,3,4,5,6},
xticklabels={{Unif.},{pth power of unif.},{exp of intensity},{K80 (Emb.)},{K80 (Not. Emb.)},{Pei}},
xticklabel style={rotate=90,font=\scriptsize},
ymode=log,
ymin=2.77555756156289e-17,
ymax=0.228992235264016,
yminorticks=true,
ylabel style={font=\color{white!15!black}, align=center},
ylabel={Infinity norm error between\\[1ex]target steady state and obtained one},
axis background/.style={fill=white},
legend style={at={(1.03,0.5)}, anchor=west, legend cell align=left, align=left, draw=none, fill=none}
]
\addplot [color=mycolor1, only marks, mark size=3.5pt, mark=x, mark options={solid, mycolor1}]
  table[row sep=crcr]{%
1	2.77555756156289e-17\\
2	5.55111512312578e-17\\
3	9.71445146547012e-17\\
4	5.55111512312578e-17\\
5	1.9595436384634e-14\\
6	4.85722573273506e-17\\
};
\addlegendentry{$S_n^\pi$}

\addplot [color=mycolor2, only marks, mark size=3.5pt, mark=o, mark options={solid, mycolor2}]
  table[row sep=crcr]{%
1	0.0160015955314558\\
2	0.0187056357298709\\
3	0.0175541883521085\\
4	0.228992235264016\\
5	0.00103342867672518\\
6	2.18268381418987e-05\\
};
\addlegendentry{$S_n$}

\addplot [color=black, dashed]
  table[row sep=crcr]{%
1	2.22044604925031e-16\\
1.05050505050505	2.22044604925031e-16\\
1.1010101010101	2.22044604925031e-16\\
1.15151515151515	2.22044604925031e-16\\
1.2020202020202	2.22044604925031e-16\\
1.25252525252525	2.22044604925031e-16\\
1.3030303030303	2.22044604925031e-16\\
1.35353535353535	2.22044604925031e-16\\
1.4040404040404	2.22044604925031e-16\\
1.45454545454545	2.22044604925031e-16\\
1.50505050505051	2.22044604925031e-16\\
1.55555555555556	2.22044604925031e-16\\
1.60606060606061	2.22044604925031e-16\\
1.65656565656566	2.22044604925031e-16\\
1.70707070707071	2.22044604925031e-16\\
1.75757575757576	2.22044604925031e-16\\
1.80808080808081	2.22044604925031e-16\\
1.85858585858586	2.22044604925031e-16\\
1.90909090909091	2.22044604925031e-16\\
1.95959595959596	2.22044604925031e-16\\
2.01010101010101	2.22044604925031e-16\\
2.06060606060606	2.22044604925031e-16\\
2.11111111111111	2.22044604925031e-16\\
2.16161616161616	2.22044604925031e-16\\
2.21212121212121	2.22044604925031e-16\\
2.26262626262626	2.22044604925031e-16\\
2.31313131313131	2.22044604925031e-16\\
2.36363636363636	2.22044604925031e-16\\
2.41414141414141	2.22044604925031e-16\\
2.46464646464646	2.22044604925031e-16\\
2.51515151515152	2.22044604925031e-16\\
2.56565656565657	2.22044604925031e-16\\
2.61616161616162	2.22044604925031e-16\\
2.66666666666667	2.22044604925031e-16\\
2.71717171717172	2.22044604925031e-16\\
2.76767676767677	2.22044604925031e-16\\
2.81818181818182	2.22044604925031e-16\\
2.86868686868687	2.22044604925031e-16\\
2.91919191919192	2.22044604925031e-16\\
2.96969696969697	2.22044604925031e-16\\
3.02020202020202	2.22044604925031e-16\\
3.07070707070707	2.22044604925031e-16\\
3.12121212121212	2.22044604925031e-16\\
3.17171717171717	2.22044604925031e-16\\
3.22222222222222	2.22044604925031e-16\\
3.27272727272727	2.22044604925031e-16\\
3.32323232323232	2.22044604925031e-16\\
3.37373737373737	2.22044604925031e-16\\
3.42424242424242	2.22044604925031e-16\\
3.47474747474747	2.22044604925031e-16\\
3.52525252525253	2.22044604925031e-16\\
3.57575757575758	2.22044604925031e-16\\
3.62626262626263	2.22044604925031e-16\\
3.67676767676768	2.22044604925031e-16\\
3.72727272727273	2.22044604925031e-16\\
3.77777777777778	2.22044604925031e-16\\
3.82828282828283	2.22044604925031e-16\\
3.87878787878788	2.22044604925031e-16\\
3.92929292929293	2.22044604925031e-16\\
3.97979797979798	2.22044604925031e-16\\
4.03030303030303	2.22044604925031e-16\\
4.08080808080808	2.22044604925031e-16\\
4.13131313131313	2.22044604925031e-16\\
4.18181818181818	2.22044604925031e-16\\
4.23232323232323	2.22044604925031e-16\\
4.28282828282828	2.22044604925031e-16\\
4.33333333333333	2.22044604925031e-16\\
4.38383838383838	2.22044604925031e-16\\
4.43434343434343	2.22044604925031e-16\\
4.48484848484848	2.22044604925031e-16\\
4.53535353535354	2.22044604925031e-16\\
4.58585858585859	2.22044604925031e-16\\
4.63636363636364	2.22044604925031e-16\\
4.68686868686869	2.22044604925031e-16\\
4.73737373737374	2.22044604925031e-16\\
4.78787878787879	2.22044604925031e-16\\
4.83838383838384	2.22044604925031e-16\\
4.88888888888889	2.22044604925031e-16\\
4.93939393939394	2.22044604925031e-16\\
4.98989898989899	2.22044604925031e-16\\
5.04040404040404	2.22044604925031e-16\\
5.09090909090909	2.22044604925031e-16\\
5.14141414141414	2.22044604925031e-16\\
5.19191919191919	2.22044604925031e-16\\
5.24242424242424	2.22044604925031e-16\\
5.29292929292929	2.22044604925031e-16\\
5.34343434343434	2.22044604925031e-16\\
5.39393939393939	2.22044604925031e-16\\
5.44444444444444	2.22044604925031e-16\\
5.49494949494949	2.22044604925031e-16\\
5.54545454545455	2.22044604925031e-16\\
5.5959595959596	2.22044604925031e-16\\
5.64646464646465	2.22044604925031e-16\\
5.6969696969697	2.22044604925031e-16\\
5.74747474747475	2.22044604925031e-16\\
5.7979797979798	2.22044604925031e-16\\
5.84848484848485	2.22044604925031e-16\\
5.8989898989899	2.22044604925031e-16\\
5.94949494949495	2.22044604925031e-16\\
6	2.22044604925031e-16\\
};
\addlegendentry{$\epsilon$}

\end{axis}
\end{tikzpicture}
	
	\caption{On the left panel we report the error in Frobenius norm $\|X^p - A\|_F$ obtained by applying the \texttt{trustregions} method with initial guess \mintinline{matlab}{X0 = M.rand()} and tolerance of the gradient of \mintinline{matlab}{1e-4} for a matrix generated from the classes in Table~\ref{tab:matrix-classes}. The right panel reports the infinity norm error between $\boldsymbol{\pi}^T A = \boldsymbol{\pi}^T$ and $\tilde{\boldsymbol{\pi}}^T A = \tilde{\boldsymbol{\pi}}^T$, i.e., $\|\tilde{\boldsymbol{\pi}} - \boldsymbol{\pi}\|_\infty$. The dashed line represents the floating-point relative accuracy of MATLAB's double-precision number.}
\end{figure}

From Figure~\ref{fig:preserving-pi} we observe that in several cases the optimization routine on $\mathbb{S}_n^{\boldsymbol{\pi}}$ provides a larger or comparable residual for the approximation of the square root, furthermore in all the cases the stationary distribution is recovered to the floating-point relative accuracy. From this experiment, we observe a trade-off between the achievable accuracy with respect to the residual with the starting matrix and the recovery of the stationary vector. Analogous results are also obtained for the case in which we consider roots of higher order (Figure~\ref{fig:preserving-pi-5}) for which the behavior is essentially the same.

In the next sections, we investigate the behavior of the different numerical methods for the solution of the linear systems needed for computing the various projections between the tangent plane and the manifold discussed in Section~\ref{sec:computational_issues}.

\subsubsection{Properties and solution of the associated linear systems}\label{sec:experiment-on-bounds}

To validate the results of the bound given by the Proposition~\ref{pro:spectral_properties} we consider the test matrices from Table~\ref{tab:matrix-classes}. Specifically, we first build the manifold associated with the stationary distribution for each matrix in the 
class, then we generate a random point on that manifold and the associated $2 \times 2$ block matrix needed for computing the projection on the tangent space associated with that point, i.e., the matrix in~\eqref{eq:alpha_and_beta_values}. Figure~\ref{fig:bound-on-matrices} reports the result of such an experiment.
\begin{figure}[htbp]
	\centering
	\input{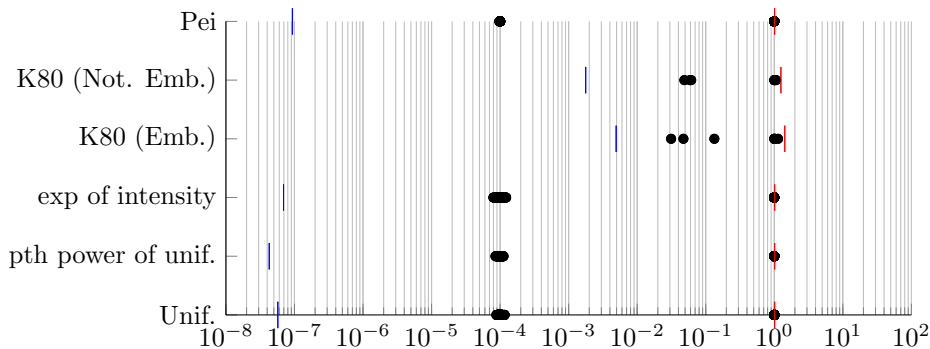}
	
	\caption{Bound from Proposition~\ref{pro:spectral_properties} for the matrices built on the $\mathbb{S}_n^{\boldsymbol{\pi}}$ manifolds associated with the stationary distribution of the test matrices from Table~\ref{tab:matrix-classes}. The vertical bars denote the bounds (red for the upper, blue for the lower), and the black dots are the numerically computed eigenvalues.}
	\label{fig:bound-on-matrices}
\end{figure}
We observe that the upper bound is more accurate than the lower one.

As a second exploration, we consider the different solution strategies of associated linear systems discussed in Section~\ref{sec:computational_issues}. Let us first consider a case outside the optimization algorithm. That is, we generate the manifold $\mathbb{S}_n^{\boldsymbol{\pi}}$, where $\boldsymbol{\pi}$ is the stationary distribution associated with the out-degree random walk on the graphs from Figure~\ref{tab:graph-example}.
\begin{figure}[htb]
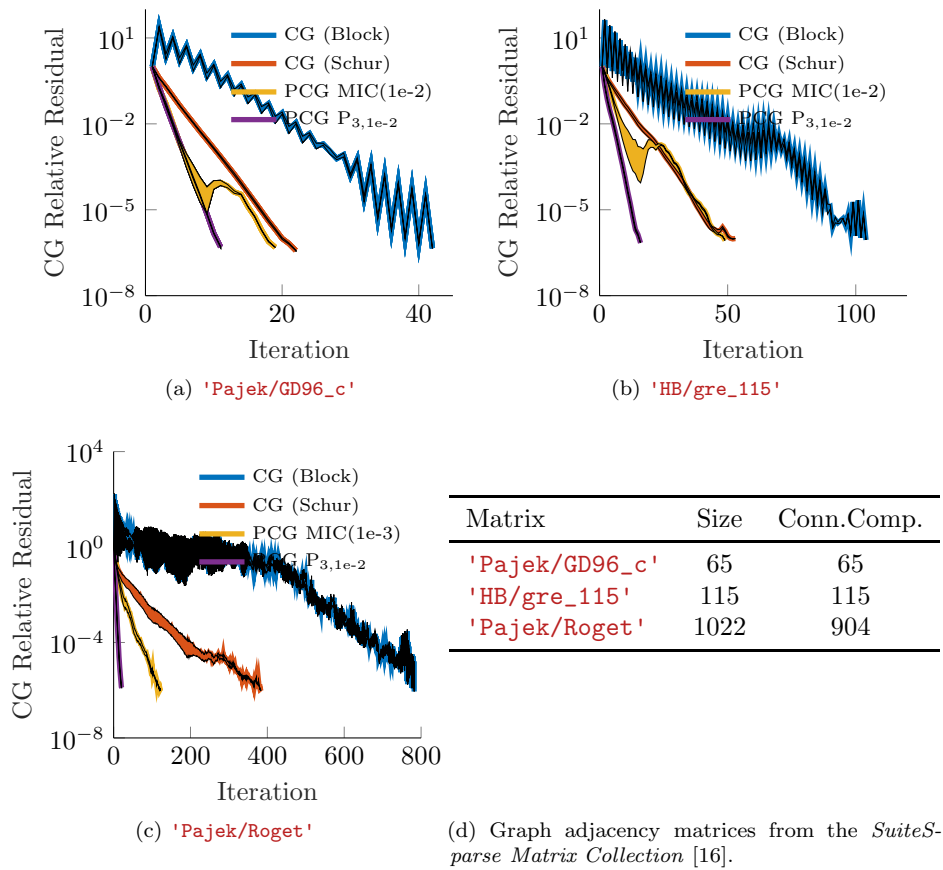

	\centering
	\subfloat[\mintinline{matlab}{'Pajek/GD96_c'}]{\input{Pajek_GD96_c}}
	\subfloat[\mintinline{matlab}{'HB/gre_115'}]{\input{HB_gre_115}}
	
	\subfloat[\mintinline{matlab}{'Pajek/Roget'}]{\input{Pajek_Roget}}
	\subfloat[\label{tab:graph-example}Graph adjacency matrices from the \emph{SuiteSparse Matrix Collection}~\cite{SuiteSparse}.]{%
		\raisebox{1.2in}{\begin{tabular}{p{0.20\textwidth}cc}
				\toprule
				Matrix  & Size & Conn.Comp.  \\
				\midrule    
				\mintinline{matlab}{'Pajek/GD96_c'} & 65 & 65 \\
				\mintinline{matlab}{'HB/gre_115'} & 115 & 115 \\
				\mintinline{matlab}{'Pajek/Roget'} & 1022 & 904 \\ 
				\bottomrule
		\end{tabular}}%
	}
	\caption{Reduction of the relative residual on the average of 50 external iterations of the optimization algorithm. The solid line represents the mean, the shaded area of the same color the 95\% confidence interval. In Figure~\ref{tab:graph-example} we give the size of the original graph and of its largest strongly connected component.}
	\label{fig:iteration}
\end{figure}
Specifically, if we call $G$ the adjacency matrix of the largest connected component of such graphs, then we consider the manifold $\mathbb{S}_n^{\boldsymbol{\pi}}$ for $\boldsymbol{\pi}$ the invariant vector of the stochastic matrix $A = \operatorname{diag}(G\mathbf{1})^{-1}\mathbf{G}$. To have comparable results in the various cases, we choose to always have the external Riemannian trust-region optimizer perform 50 iterations and all instances are run by re-initializing the random number generator to the same seed. From the results in Figure~\ref{fig:iteration} we observe that the $2 \times 2$-block formulation is the one for which the convergence is more prone to undergo oscillations. On the other hand, the reduced reformulation in terms of the Schur complement given in equation~\eqref{eq:schurversion} tends to improve the situation. It is also observed that the two preconditioning strategies proposed are able to consistently reduce the number of iterations necessary for convergence. In particular, the version that uses the modified Cholesky incomplete factorization is more sensitive to the choice of the tolerance on the discarded elements and on the size of the global matrix. On the other hand, the use of the Neumann series preconditioner $P_{k,\tau}$ seems to give more consistent results with a higher tolerance on the elements to be dropped.

\subsection{The case of reducible Markov chains}\label{sec:reducible-chains}

We consider here what is an \emph{edge case} for our approach, i.e.,  when the chain is reducible due to the existence of two communication classes; in other words, to the case in which the stationary distribution $\boldsymbol{\pi} = [0,\ldots,0,1]^T$. This case is motivated by the embedding problem for Markov models of the term structure of credit risk spreads~\cite{application2}. The Markov chain modeling represents the dynamics of the different credit rating states, i.e., evaluations of the relative ability of an entity or obligation to meet financial commitments over time and examining the probability of transitioning in between these states. The different levels go from \texttt{AAA}, the lowest expectation of default risk, to \texttt{D}, for an issuer who has entered into bankruptcy and cannot recover from it. In Table~\ref{tab:creditriskmatrix} we report sample data from~\cite{application2}.
\begin{table}[htbp]
	\centering
	\small
	\begin{tabular}{r|cccccccc}
		\toprule
		\mintinline{matlab}{data(i,j)} &	\texttt{AAA}	&	\texttt{AA}	&	\texttt{A}	&	\texttt{BBB}	&	\texttt{BB}	&	\texttt{B}	&	\texttt{CCC}	&	\texttt{D}	\\
		\midrule
		\texttt{AAA}	&	0,891	&	0,0963	&	0,0078	&	0,0019	&	0,003	&	0	&	0	&	0	\\
		\texttt{AA}	&	0,0086	&	0,901	&	0,0747	&	0,0099	&	0,0029	&	0,0029	&	0	&	0	\\
		\texttt{A}	&	0,0009	&	0,0291	&	0,8894	&	0,0649	&	0,0101	&	0,0045	&	0	&	0,0009	\\
		\texttt{BBB}	&	0,0006	&	0,0043	&	0,0656	&	0,8427	&	0,0644	&	0,016	&	0,0018	&	0,0045	\\
		\texttt{BB}	&	0,0004	&	0,0022	&	0,0079	&	0,0719	&	0,7764	&	0,1043	&	0,0127	&	0,0241	\\
		\texttt{B}	&	0	&	0,0019	&	0,0031	&	0,0066	&	0,0517	&	0,8246	&	0,0435	&	0,0685	\\
		\texttt{CCC}	&	0	&	0	&	0,0116	&	0,0116	&	0,0203	&	0,0754	&	0,6493	&	0,2319	\\
		\texttt{D}	&	0	&	0	&	0	&	0	&	0	&	0	&	0	&	1	\\
		\bottomrule
	\end{tabular}
	\cprotect\caption{Standard and Poor's Credit Review (1993) from \cite[Table~3]{application2}. The numbers are reported with four figures as in the original data, this means that the probabilities are not normalized to have sum one. To be able to use them in the code we renormalize the entries as: {\mintinline{matlab}{A = diag(sum(data,2))\data;}}.}
	\label{tab:creditriskmatrix}
\end{table}
In order to compute a stochastic approximation of the root of the matrix, we cannot use manifold-based optimization directly, since the stationary distribution is $\boldsymbol{\pi} = [0,\ldots,0,1]^T$ and therefore it cannot be used to construct the manifold. In a similar guise to the Page Rank problem, we  apply a perturbation to the data matrix to make it irreducible, as
\[
\tilde{A} = (1-\gamma) A +  \gamma(\mathbf{1}\mathbf{1}^T)/n, \qquad 0 < \gamma \ll 1,  
\]
which admits stationary distribution $\tilde{\boldsymbol{\pi}}>0$ with respect to which we can construct the manifold $\mathbb{S}_n^{\tilde{\boldsymbol{\pi}}}$. The approximate root can then be computed by solving for
\begin{equation}\label{eq:riskopt}
	\min_{X \in \mathbb{S}_n^{\tilde{\boldsymbol{\pi}}}} \frac{1}{2}\|X^2 - A \|_F^2.
\end{equation}
For $\gamma = 10^{-4}$, we obtain \[\tilde{\boldsymbol{\pi}} \approxeq [0.0002, 0.0007, 0.0012, 0.0009, 0.0005, 0.0006, 0.0001, 0.9957]^T. \]
For the construction of the starting point for the optimization we exploit the modified Sinkhorn algorithm (Proposition~\ref{prop:extsk}) to project the perturbed version $A$ of the upper triangular matrix on the manifold in the same way,
\begin{minted}[bgcolor=bg,fontsize=\small]{matlab}
E = ones(size(A));                %
X0 = diag(sum(triu(ones(size(A))),2))\triu(ones(size(A))); %
X0 = gam*E + (1-gam)*X0;          %
X0 = modifiedsinkhorn(X0,pi,100); %
\end{minted}
and start both the standard constrained optimization and the Riemannian optimization for~\eqref{eq:riskopt}. In Figure~\ref{fig:riskfigure} we report the two approximations of the root thus obtained, as it can be observed the approximation obtained through the Riemannian optimization has a structure that is much closer to what we would expect from the appropriate matrix function. In particular the extradiagonal values on the last row are negligible with respect to the weight of the transition probability of remaining in \texttt{D}.
\begin{figure}
	\centering%
	\definecolor{mycolor1}{rgb}{0.00000,0.44700,0.74100}%
	\definecolor{mycolor2}{rgb}{0.85000,0.32500,0.09800}%
	\subfloat[Entries of the approximated square roots]{\includegraphics[width=0.8\columnwidth]{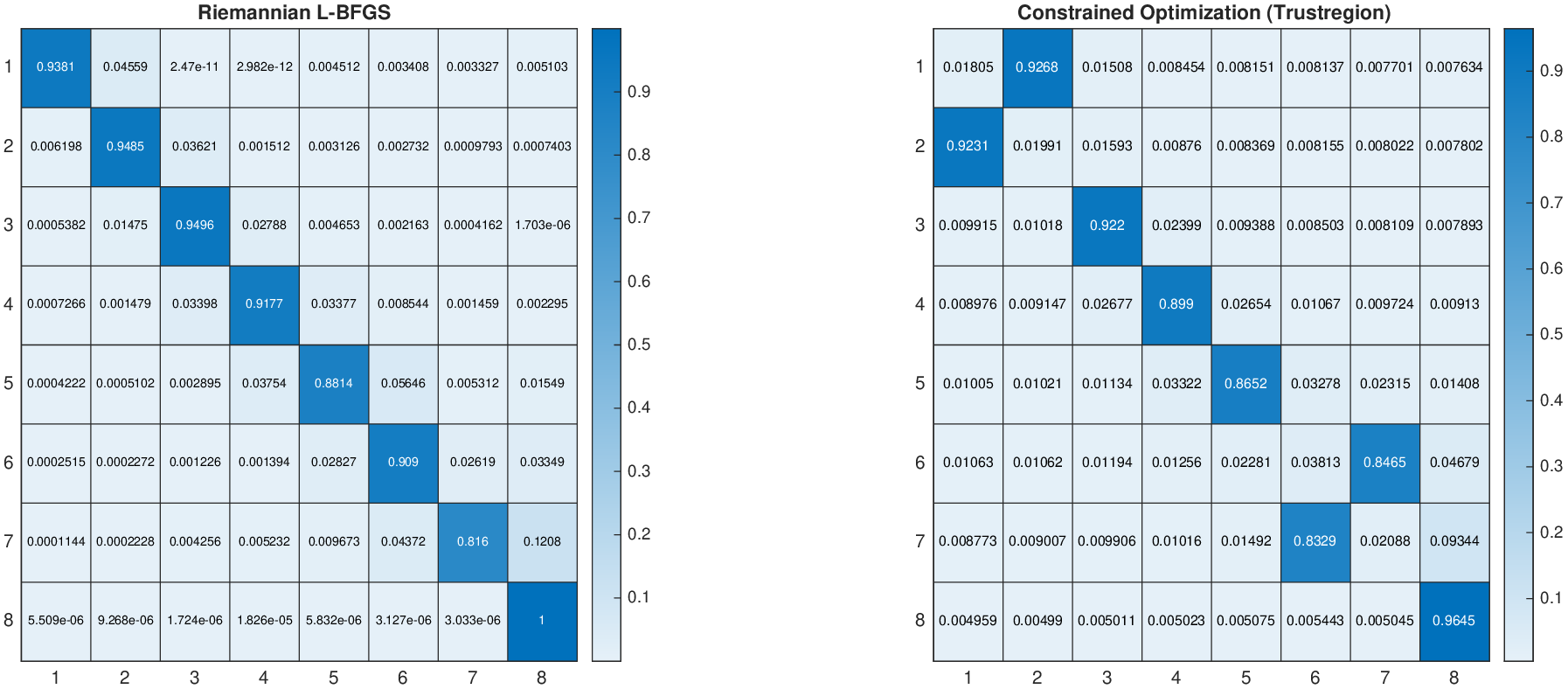}}
	
	\subfloat[stationary distributions\label{fig:riskfigure:stationary}]{%
		\begin{tikzpicture}
			
			\begin{axis}[%
				width=0.7\columnwidth,
				height=1.3in,
				at={(0in,0in)},
				scale only axis,
				xmin=1,
				xmax=8,
				xtick={1,2,3,4,5,6,7,8},
				xticklabels={{AAA},{AA},{A},{BBB},{BB},{B},{CCC},{D}},
				ymode=log,
				ymin=0.0001,
				ymax=1,
				yminorticks=true,
				axis background/.style={fill=white},
				legend style={at={(0.7\columnwidth,0.7)}, anchor=south west, legend cell align=left, align=left, draw=none, fill=none, font=\small}
				]
				\addplot [color=mycolor1, only marks, mark=o, mark options={solid, mycolor1}]
				table[row sep=crcr]{%
					1	0.000189568645599814\\
					2	0.000734159310737329\\
					3	0.00123114546198187\\
					4	0.000911392242344639\\
					5	0.000529392887304481\\
					6	0.000568798528368104\\
					7	0.000129996156475851\\
					8	0.995705546767188\\
				};
				\addlegendentry{\texttt{rlbfgs}}
				
				\addplot [color=mycolor2, only marks, mark=x, mark options={solid, mycolor2}]
				table[row sep=crcr]{%
					1	0.105732841609923\\
					2	0.106090607115502\\
					3	0.128482109653194\\
					4	0.104026681119782\\
					5	0.0727518860646815\\
					6	0.0597796907785042\\
					7	0.059077910901835\\
					8	0.364058272756578\\
				};
				\addlegendentry{Trustregion}
				
			\end{axis}
		\end{tikzpicture}%
	}
	\caption{Approximate square roots via the Riemannian L-BFGS algorithm (\texttt{rlbfgs}) and the constrained optimization algorithm started from the same approximation $X_0$ for the Markov models of the term structure of credit risk spreads.}
	\label{fig:riskfigure}
\end{figure}
In addition, beyond the structural similarity, the Riemannian approximation preserves the stationary (perturbed) distribution by construction, while the one based solely on constrained optimization produces a probability distribution that is closer to being uniform than concentrated in the \texttt{D} state; see Figure~\ref{fig:riskfigure:stationary}.

\section{Conclusions and future directions}\label{sec:conclusions}

In this paper, we have dealt with the problem of approximating the $p$th root of a stochastic matrix with a stochastic matrix. In particular, by observing that stochastic matrices form a Riemannian manifold with respect to the Fisher metric, we have exploited several specific optimization algorithms which show better performance than their counterparts that use only constrained optimization. Furthermore, we have introduced a new Riemannian manifold---employing the same metric---on the set of stochastic matrices with fixed steady state. This allowed us to employ Riemannian optimization algorithms capable of obtaining approximations of stochastic $p$th roots which also preserve such vector, i.e., such that the Markov chain induced by them has the same steady state. We have also shown that we can apply the proposed strategy to the case of non-reducible Markov chains through a perturbation technique. 

In the future, we intend to study the geodetic structure of this new manifold to better characterize the obtained $p$th root approximations, and to further investigate the solution of the associated computational problems, e.g., the solution of the linear systems needed to calculate the projections from the tangent plane to the manifold and the representation of the Riemannian Hessian, in order to further improve the computational efficiency of the proposed methods.

\section*{Acknowledgements} 
We thank the two anonymous reviewers whose comments allowed us to improve the presentation and clarity of the paper.

\bibliographystyle{siamplain}
\bibliography{references}
\end{document}